\theoremstyle{plain}
\newtheorem{theorem}{Theorem}[section]
\theoremstyle{remark}
\newtheorem{remark}[theorem]{Remark}
\newtheorem{example}[theorem]{Example}
\theoremstyle{plain}
\newtheorem{corollary}[theorem]{Corollary}
\newtheorem{lemma}[theorem]{Lemma}
\newtheorem{proposition}[theorem]{Proposition}
\newtheorem{definition}[theorem]{Definition}
\numberwithin{equation}{section}
\def \R{ \mathbb{R} }
\def \N{ \mathbb{N} }
\def \T{ \mathscr{T} }
\def \I{ \mathscr{I} }
\def \C{ \mathbb{C} }
\def \Ell{\rm Ell}
\renewcommand{\P}{{\mathbb P}}
\newcommand{\calK}{\mathcal{K}}
\def\avint_#1{\mathchoice%
      {\mathop{\kern 0.2em\vrule width 0.6em height 0.69678ex depth -0.58065ex
              \kern -0.8em \intop}\nolimits_{\kern -0.4em#1}}%
      {\mathop{\kern 0.1em\vrule width 0.5em height 0.69678ex depth -0.60387ex
              \kern -0.6em \intop}\nolimits_{#1}}%
      {\mathop{\kern 0.1em\vrule width 0.5em height 0.69678ex depth -0.60387ex
              \kern -0.6em \intop}\nolimits_{#1}}%
      {\mathop{\kern 0.1em\vrule width 0.5em height 0.69678ex depth -0.60387ex
              \kern -0.6em \intop}\nolimits_{#1}}}
\newcommand{\Rr}{\ensuremath{\mathcal{R}}}
\newcommand{\calL}{{\mathscr L}}
\newcommand{\one}{{{\bf 1}}}
\newcommand{\lb}{\langle}
\newcommand{\rb}{\rangle}
\newcommand{\calA}{\mathscr{A}}
\renewcommand{\MR}{\text{MR}^p((a,b),v)}
\newcommand{\MRT}{\text{MR}^p((0,T),v)}
\newcommand{\MRinfplus}{\text{MR}^p(\R_+,v)}
\newcommand{\MRinfR}{\text{MR}^p(\R,v)}
\DeclareMathOperator{\esssup}{ess.\,sup}
\begin{document}
\numberwithin{equation}{section}

\author{Chiara Gallarati}
\address{Delft Institute of Applied Mathematics\\
Delft University of Technology \\ P.O. Box 5031\\ 2600 GA Delft\\The
Netherlands} \email{C.Gallarati@tudelft.nl}

\author{Mark Veraar}
\email{M.C.Veraar@tudelft.nl}

\date\today

\title
[Maximal regularity]{Maximal regularity for non-autonomous equations with measurable dependence on time}

\begin{abstract}
In this paper we study maximal $L^p$-regularity for evolution equations with time-dependent operators $A$. We merely assume a measurable dependence on time. In the first part of the paper we present a new sufficient condition for the $L^p$-boundedness of a class of vector-valued singular integrals which does not rely on H\"ormander conditions in the time variable. This is then used to develop an abstract operator-theoretic approach to maximal regularity.

The results are applied to the case of $m$-th order elliptic operators $A$ with time and space-dependent coefficients. Here the highest order coefficients are assumed to be measurable in time and continuous in the space variables. This results in an $L^p(L^q)$-theory for such equations for $p,q\in (1, \infty)$.

In the final section we extend a well-posedness result for quasilinear equations to the time-dependent setting. Here we give an example of a nonlinear parabolic PDE to which the result can be applied.
\end{abstract}

\keywords{Singular integrals, maximal $L^p$-regularity, evolution equations, functional calculus, elliptic operators, $A_p$-weights, $\Rr$-boundedness, extrapolation, quasi-linear PDE}



\subjclass[2010]{Primary: 42B20, 42B37; Secondary: 34G10, 35B65, 42B15, 47D06, 35K90, 34G20, 35K55}

\thanks{The first author is supported by Vrije Competitie subsidy 613.001.206 and the second author by the Vidi subsidy 639.032.427 of the Netherlands Organisation for Scientific Research (NWO)}

\maketitle


\section{Introduction}

In this paper we study maximal $L^p$-regularity of the Cauchy problem:
\begin{equation}\label{eq:introCauchygeneral}
\begin{aligned}
u'(t)+A(t)u(t)& =f(t),\ t\in (0,T)
\\ u(0)& =x.
\end{aligned}
\end{equation}
Here $(A(t))_{t\in (0,T)}$ is a family of closed operators on a Banach space $X_0$. We assume the operators have a constant domain $D(A(t)) = X_1$ for $t\in [0,T]$.

In recent years there has been much interest in maximal regularity techniques and
their application to nonlinear PDEs. Maximal regularity
can often be used to obtain a priori estimates which give global existence results. For
example, using maximal regularity it is possible to solve quasi-linear and fully nonlinear
PDEs by elegant linearization techniques combined with the contraction mapping
principle \cite{Am,Amoverview,CleLi,ClePru,Lun,Pruss02}.
This has found numerous applications in problems from mathematical
physics (e.g. fluid dynamics, reaction-diffusion equations, material science,
etc.\ see e.g.\ \cite{AbT,CleLi,DGHSS,GHHetc,HDR,LPS,Meynon,MSWM,Pierre,Pruss02,PrVVZ,Saal,YPZ}).
For maximal H\"older-regularity we refer the reader to \cite{AT2,Lun} and references therein. In this paper we focus on maximal $L^p$-regularity as this usually requires the least regularity of the data in PDEs.

An important step in the theory of maximal $L^p$-regularity was the discovery of an operator-theoretic characterization in terms of $\Rr$-boundedness properties of the differential operator $A$ due to Weis (see \cite{Weis01,Weis}). This characterization was proved for the class of Banach spaces with the UMD property. About the same time Kalton and Lancien discovered that not every sectorial operator $A$ on $X=L^q$ of angle $<\pi/2$ has maximal $L^p$-regularity (see \cite{KaLa,KaLa2} and \cite{Fack}), but their example is not a differential operator.

In the case $t\mapsto A(t)$ is (piecewise) continuous, one can study maximal $L^p$-regularity using perturbation arguments (see \cite{Am04,ACFP07,PS01}). In particular, in
\cite{PS01}, it was shown that maximal $L^p$-regularity of \eqref{eq:introCauchygeneral} is
equivalent to the maximal $L^p$-regularity for each operator $A(t_0)$ for $t_0\in [0,T]$ fixed. This, combined with the characterization of \cite{Weis} yields a very precise condition for maximal $L^p$-regularity.  The case where the domains $D(A(t))$ vary in time will not be considered in this paper. In that setting maximal $L^p$-regularity results can be obtained under certain H\"older regularity assumptions in the time variable (see \cite{PortalStr} and references therein).

In many real-life models, the differential operator $A$ has time-dependent coefficients, and
the dependence on time can be rather rough (e.g. the coefficient could be a stochastic process). If this is the case, the operator-theoretic
characterization of maximal regularity just mentioned does not apply or leads to unwanted restrictions.
In the present paper we develop a functional analytic approach to maximal $L^p$-regularity in the case $t\mapsto A(t)$ is only measurable (see Theorems \ref{thm:HSintro} and \ref{teostep1generalized} below). Our approach is based on the $L^p$-boundedness of a new class of vector-valued singular integrals of non-convolution type (see Theorem \ref{thm:singular}). It is important to note that we do not assume any H\"ormander conditions on the kernel in the time variable. For discussion and references on (vector-valued) singular integrals we refer the reader to Section \ref{sec:sing}.

When the time-dependence is just measurable, an operator-theoretic condition for maximal $L^p$-regularity is known only in the Hilbert space setting for $p=2$ (see \cite{Lions61, Lions68} and \cite[Section 5.5]{Ta1}). The assumption here is that $A$ arises from a coercive form $a(t, \cdot, \cdot):V\times V\to \C$ and $V\hookrightarrow X_0\hookrightarrow V'$. Unfortunately, this only yields a theory of maximal $L^2$-regularity on $V'$ in general (see \cite{fackler2016j} for a counterexample). In many situations one would like to have maximal $L^p$-regularity on $X_0$ and also for any $p\in (1, \infty)$. Results of this type have been obtained in \cite{ADLU, dier2014non, dier2016non, haak2014maximal} using regularity conditions on the form  in the time variable.

Most results will be presented in the setting of weighted $L^p$-spaces. For instance Theorems \ref{thm:2mintro} and  \ref{teomaxreghigherorder} we will present a weighted $L^p(L^q)$-maximal regularity result in the case $A$ is a $2m$-th order elliptic operator, assuming only measurability in the time variable and continuity in the space variable. Weighted results can be important for several reasons. Maximal $L^p$-regularity with a power weight $t^{\alpha}$ in time (e.g.\ see \cite{KPW, Meynon}) allows one to consider rather rough initial values. It can also be used to prove compactness properties which in turn can be used to obtain global existence of solutions. Another advantage of using weights comes from a harmonic analytic point of view. The theory of Rubio de Francia (see \cite{CMP} and references therein) enables one to extrapolate from weighted $L^p$-estimates for a single $p\in (1, \infty)$, to any $p\in (1, \infty)$. In Section \ref{sec:2mell} $A_p$-weights in space will be used to check $\Rr$-boundedness of certain integral operators. We refer to Theorem \ref{thm:weightedR} and Step 1 of the proof of Theorem \ref{teomaxreghigherorder} for details.
Weights in time will be used for extrapolation arguments more directly. For instance in step 4 of the proof of Theorem \ref{teomaxreghigherorder} and also the proof of Theorem \ref{thm:HSintro} at the end of Section \ref{subsec:tracesinitial}.

In the special case $X_0$ is a Hilbert space, our main result Theorem \ref{teostep1generalized} implies the following result.
\begin{theorem}\label{thm:HSintro}
Let $X_0$ be a Hilbert space. Assume $A:(0,\tau)\to \calL(X_1, X_0)$ is such that for all $x\in X_1$, $t\mapsto A(t) x$ is measurable and
\[c_1\|x\|_{X_1}\leq \|x\|_{X_0}+\|A(t)x\|_{X_0} \leq c_2\|x\|_{X_1}, \ \ \ t\in (0,\tau),  \ x\in X_1.\]
Assume there is an operator $A_0$ on $X_0$ with $D(A_0) = X_1$ which generates a contractive analytic semigroup $(e^{-zA_0})_{z\in \Sigma_{\theta}}$ which is such that $(A(t)-A_0)_{t\in (0,\tau)}$ generates an evolution system $(T(t,s))_{0\leq s\leq t\leq \tau}$ on $X_0$ which commutes with $(e^{-r A_0})_{r\geq 0}$.
\[e^{-rA_0}T(t,s) = T(t,s)e^{-rA_0}, \ \ 0\leq s\leq t\leq \tau, \ \ r\geq 0.\]
Then $A$ has maximal $L^p$-regularity for every $p\in (1, \infty)$, i.e.\ for every $f\in L^p(0,\tau;X_0)$ and $x\in (X_0, X_1)_{1-\frac1p,p}$ there exists a unique strong solution $u\in L^p(0,\tau;X_1) \cap W^{1,p}(0,\tau;X_0)\cap C([0,\tau];(X_0, X_1)_{1-\frac1p,p})$ of \eqref{eq:introCauchygeneral} and there is a constant $C$ independent of $f$ and $x$ such that
\begin{align*}
\|u\|_{L^p(0,\tau;X_1)} + \|u\|_{W^{1,p}(0,\tau;X_0)} + &\|u\|_{C([0,\tau];(X_0, X_1)_{1-\frac1p,p})}
\\ & \leq C\|f\|_{L^p(0,\tau;X_0)} + C\|x\|_{(X_0, X_1)_{1-\frac1p,p}}.
\end{align*}
\end{theorem}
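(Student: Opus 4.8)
The plan is to deduce Theorem \ref{thm:HSintro} from the abstract result Theorem \ref{teostep1generalized}, whose hypothesis is an $\Rr$-boundedness (or boundedness, in the Hilbert space case) condition on the operators $(A(t) - A_0)(\lambda + A_0)^{-1}$ uniformly in $t$ and $\lambda$ on a sector, together with a maximal regularity assumption for the autonomous problem $A_0$. Since $X_0$ is a Hilbert space, $\Rr$-boundedness reduces to uniform boundedness, so the task is to verify these two ingredients from the form-type bounds and the commutation/evolution-family structure given in the statement.

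First I would observe that the two-sided estimate $c_1\|x\|_{X_1} \le \|x\|_{X_0} + \|A(t)x\|_{X_0} \le c_2\|x\|_{X_1}$ says precisely that each $A(t)$, viewed as a map $X_1 \to X_0$, is bounded with bounded inverse (modulo the lower-order term), uniformly in $t$; in particular $B(t) := A(t) - A_0 \in \calL(X_1, X_0)$ with $\|B(t)\|_{\calL(X_1,X_0)} \le C$ uniformly in $t$, using that $A_0 : X_1 \to X_0$ is itself bounded. Next, since $A_0$ generates a contractive analytic semigroup on the Hilbert space $X_0$, it has a bounded $H^\infty$-calculus (indeed, being the generator of a contraction semigroup, $A_0$ is $m$-accretive and the Hilbert space setting gives the calculus for free via von Neumann / McIntosh), and in particular $A_0$ has maximal $L^p$-regularity for all $p \in (1,\infty)$ and all power weights $t^\alpha$ in the relevant range, by the Weis-type characterization (here $\Rr$-boundedness is automatic in Hilbert space). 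This supplies the autonomous input needed by Theorem \ref{teostep1generalized}.

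The heart of the matter is to show $\sup_{t}\sup_{\lambda \in \Sigma} \|B(t)(\lambda + A_0)^{-1}\|_{\calL(X_0)} < \infty$ for $\lambda$ in a suitable sector, and more importantly the analogous bound for the operator family appearing in the singular integral of Theorem \ref{thm:singular} --- this is where the commutation hypothesis $e^{-rA_0} T(t,s) = T(t,s) e^{-rA_0}$ enters. The point is that the evolution system $(T(t,s))$ generated by $(B(t))$ commutes with the resolvents $(\lambda + A_0)^{-1}$ (obtained from the semigroup by Laplace transform), and one uses the variation-of-constants representation of the solution operator for \eqref{eq:introCauchygeneral} together with the factorization $A(t) = A_0 + B(t)$ to rewrite the solution as the autonomous solution operator for $A_0$ composed/perturbed with kernel operators built from $T(t,s)$; the commutation makes these pieces interchange so that the kernel estimates reduce to bounds on $B(t)(\mu + A_0)^{-1}$ and on $T(t,s)$, both of which are uniformly bounded on $X_0$. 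Concretely I would check that the kernel $K(t,s)$ of the relevant singular integral operator, which morally looks like $A_0 e^{-(t-s)A_0} T(t,s)$ or $B(t) e^{-(t-s)A_0}$, satisfies the size bound $\|K(t,s)\|_{\calL(X_0)} \lesssim (t-s)^{-1}$ needed in Theorem \ref{thm:singular}, using analyticity of $(e^{-zA_0})$ to get the factor $(t-s)^{-1}$ from $A_0 e^{-(t-s)A_0}$ and boundedness (contractivity) of $T(t,s)$ on $X_0$; crucially no H\"older/H\"ormander regularity in $t$ is required because Theorem \ref{thm:singular} does not ask for it.

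The main obstacle I anticipate is the bookkeeping around the evolution system: one must make sense of $(B(t))_{t}$ "generating" an evolution family $(T(t,s))$ when $B(t)$ is only measurable in $t$, identify $T(t,s)$ with the solution operator of $u' + B(t)u = 0$, and justify the variation-of-constants / commutation manipulations on the level of the (a priori only formally defined) singular integral --- this likely proceeds by first establishing everything for smooth approximations $B_n(t)$ of $B(t)$, proving the uniform $L^p$-bound via Theorem \ref{thm:singular} for each $n$, and then passing to the limit, using density of nice data and the constant-domain hypothesis $D(A(t)) = X_1$ to identify the limit as the strong solution. Once the $L^p(0,\tau;X_1) \cap W^{1,p}(0,\tau;X_0)$ bound is in hand, the trace/initial-value statement $u \in C([0,\tau];(X_0,X_1)_{1-\frac1p,p})$ and the corresponding estimate follow from the standard trace theory for maximal regularity spaces (real interpolation), exactly as invoked at the end of Section \ref{subsec:tracesinitial}, and uniqueness follows from the uniqueness part of Theorem \ref{teostep1generalized}.
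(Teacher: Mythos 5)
Your high-level plan (reduce to Theorem \ref{teostep1generalized}, use the Hilbert-space structure to cheapen the $\Rr$-boundedness condition, extract the trace estimate from Proposition \ref{prop:initialvalue} and Example \ref{ex:chtrace}) is the right shape, but you have misidentified the abstract hypothesis that needs to be verified, and consequently you propose the wrong mechanism and leave the actual work undone. The hypothesis \textbf{(Rbdd)} of Theorem \ref{teostep1generalized} is \emph{not} a uniform boundedness statement for the sectorial family $(A(t)-A_0)(\lambda+A_0)^{-1}$ in $\calL(X_0)$. It is $\Rr$-boundedness of the family of \emph{integral operators} $\{I_{\omega,kT}: k\in\calK\}$ acting on $L^p(\R,v;X_0)$. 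Likewise, Theorem \ref{thm:singular} has no ``kernel size bound $\|K(t,s)\|\lesssim (t-s)^{-1}$'' hypothesis: its inputs are the factorization (H2), the $H^\infty$-calculus assumption (H3), and again the $\Rr$-boundedness (H4) of the integral-operator family $\{I_{kT}\}$. Your claim that ``since $X_0$ is a Hilbert space, $\Rr$-boundedness reduces to uniform boundedness'' is correct only for families in $\calL(X_0)$ --- but the family that must be controlled lives in $\calL(L^p(\R,v;X_0))$, and $L^p(\R,v;X_0)$ is a Hilbert space only when $p=2$. For general $p$, a uniformly bounded family on $L^p(\R,v;X_0)$ need not be $\Rr$-bounded, so your reduction is not available as stated.

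The actual argument has to pass through $p=2$ and extrapolate. Concretely: extend $A$ constantly to $\R$ (say $A(t)=A_0$ off $(0,\tau)$); then $T(t,s)$ is uniformly bounded on $X_0$, so by Proposition \ref{prop:uniformTts} the family $\{I_{kT}:k\in\calK\}$ is uniformly bounded on $L^2(\R,v;X_0)$ for every $v\in A_2$. Since $L^2(\R,v;X_0)$ \emph{is} a Hilbert space (here the Hilbert hypothesis on $X_0$ is genuinely used), uniform boundedness coincides with $\Rr$-boundedness at $p=2$; then Proposition \ref{prop:pindRbdd} extrapolates this $\Rr$-boundedness to $\calL(L^p(\R,v;X_0))$ for all $p\in(1,\infty)$ and $v\in A_p$, yielding \textbf{(Rbdd)}. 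Combined with the observation that an analytic contraction semigroup generator on a Hilbert space has a bounded $H^\infty$-calculus of angle $<\pi/2$ (so \textbf{(E)} holds), Theorem \ref{teostep1generalized} applies directly; no smooth approximation of $B(t)$ is needed, as the existence of the evolution family $T(t,s)$ is already part of the hypothesis. Your final step (traces and initial values via real interpolation) is correct, but the central verification of \textbf{(Rbdd)} --- and in particular the $p=2$ plus extrapolation device --- is missing from your argument and cannot be replaced by a resolvent or kernel-size estimate.
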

The condition on $A(t) - A_0$ can be seen as an abstract ellipticity condition. The assumption that the operators are commuting for instance holds if $A(t)$ and $A_0$ are differential operators with coefficients independent of the space variable on $\R^d$. We will show that the space dependence can be put in later on by perturbation arguments.

In Section \ref{subsec:tracesinitial} we will derive this result from Theorem \ref{teostep1generalized} where the case of more general Banach spaces $X_0$ and weighted $L^p$-spaces is considered. Instead of assuming that $A_0$ generates an analytic contraction semigroup one could also assume that $A_0$ has a bounded $H^\infty$-calculus of angle $<\pi/2$.

\medskip

As an application of our main result we prove maximal $L^p$-regularity for the following class of parabolic PDEs:
\begin{equation}\label{eq:introCauchy}
\begin{aligned}
u'(t,x)+A(t)u(t,x)& =f(t,x),\ t\in (0,T), \ x\in \R^d,
\\ u(0,x)& =u_0(x),  \ \ x\in \R^d.
\end{aligned}
\end{equation}
Here
\begin{equation}\label{eq:operatorAintro}
A(t)u(t,x) = \sum_{|\alpha|\leq m} a_{\alpha}(t,x) D^{\alpha} u(t,x).
\end{equation}
For such concrete equations  with coefficients which depend on time in a measurable way, maximal $L^p$-regularity results can be derived using PDE techniques. Our results enable us to give an alternative approach to several of these problems. Moreover, we are the first to obtain a full $L^p(0,T;L^q(\R^d))$-theory, whereas previous papers usually only give results for $p=q$ or $q\leq p$ (see Remark \ref{rem:discussionHHKK} for discussion).

In the next result we will use condition (C) on $A$ which will be introduced in Section \ref{sec:2mell}.
It basically says that $A$ is uniformly elliptic and the highest order coefficients are continuous in space, but only measurable in time.
\begin{theorem}\label{thm:2mintro}
Let $T\in (0,\infty)$. Assume condition (C) on the family of operators  $(A(t))_{t\in (0,T)}$ given by \eqref{eq:operatorAintro}. Let $p,q\in (1, \infty)$. Then the operator $A$ has maximal $L^{p}$-regularity on $(0,T)$,
i.e.\ for every $f\in L^p(0,T;L^q(\R^d))$ and $u_0\in B^{s}_{q,p}(\R^d)$ with $s = m(1-\frac{1}{p})$, there exists a unique
\[u\in W^{1,p}(0,T;L^q(\R^d))\cap L^p(0,T;W^{m,q}(\R^d))\cap C([0,T];B^{s}_{q,p}(\R^d))\]
such that \eqref{eq:introCauchy} holds a.e.\ and there is a $C>0$ independent of $u_0$ and $f$ such that
\begin{equation}\label{eqmaxregintro}
\begin{aligned}
\|u\|_{L^p(0,T;W^{m,q}(\R^d))} & +\|u\|_{W^{1,p}(0,T;L^q(\R^d))} + \|u\|_{C([0,T];B^{s}_{q,p}(\R^d))}
\\ & \ \ \ \ \leq C \big(\|f\|_{L^{p}(\R;L^q(\R^d))} + \|u_0\|_{B^{s}_{q,p}(\R^d)}\big).
\end{aligned}
\end{equation}
\end{theorem}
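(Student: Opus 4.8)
The plan is to deduce Theorem \ref{thm:2mintro} from the abstract result Theorem \ref{teostep1generalized} (equivalently, from its Hilbert-space corollary's more general Banach-space parent), applied with the scale $X_0 = L^q(\R^d)$, $X_1 = W^{m,q}(\R^d)$, and with the weight in time taken trivial (weight $\equiv 1$). First I would recall condition (C): $A$ is uniformly elliptic with top-order coefficients $a_\alpha(t,\cdot)$ continuous in space (uniformly in $t$, with a modulus of continuity) and merely measurable and bounded in $t$, while lower-order coefficients are bounded and measurable in both variables. The strategy is the standard freezing-of-coefficients argument, but carried out so that all the resulting perturbations fit the hypotheses of the abstract theorem.

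The key steps are as follows. (1) \emph{Reduce to constant (in space) top-order coefficients.} For a point $x_0 \in \R^d$ fixed, consider the operator $A^\#(t) = \sum_{|\alpha|=m} a_\alpha(t,x_0) D^\alpha$ with only top-order, space-independent coefficients, plus a large shift $\lambda$; this is exactly a family of Fourier multipliers, which commute with each other and with a suitable reference operator $A_0$ (e.g.\ $A_0 = (-\Delta)^{m/2} + \lambda$ or $A_0 = A^\#(t_0)$), so the commutation hypothesis in the abstract theorem holds and one obtains weighted $L^p(L^q)$-maximal regularity for $A^\#$ on a small time interval --- here one invokes $\Rr$-boundedness of the relevant symbol classes, using $A_q$-weights in space (Theorem \ref{thm:weightedR}) to upgrade from a single exponent. (2) \emph{Localize in space.} Using the uniform continuity of the top-order coefficients, choose a partition of unity $(\varphi_j)$ subordinate to a cover by balls of radius $\delta$ small enough that on each ball the top-order coefficients are within $\varepsilon$ of their frozen values; on each piece the difference $A(t) - A^\#_j(t)$ has small top-order part plus lower-order terms, and is therefore a small (plus lower-order, hence relatively bounded with small constant after possibly enlarging $\lambda$) perturbation. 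A perturbation lemma for maximal regularity (available once one has the a priori estimate from Step 1) then gives maximal regularity for $A(t)$ localized, and patching the pieces together via the partition of unity, commutator estimates, and a Neumann-series/fixed-point argument yields maximal regularity on a short interval $(0,\tau)$. (3) \emph{Remove the shift and iterate in time.} A rescaling $u \mapsto e^{\lambda t} u$ absorbs the shift $\lambda$; then partition $(0,T)$ into finitely many subintervals of length $\le \tau$ and concatenate the solutions, using that the trace space $(X_0,X_1)_{1-1/p,p} = B^{s}_{q,p}(\R^d)$ with $s = m(1-1/p)$ is exactly the space in which the solution is continuous (from the abstract theorem), so the endpoint value of $u$ on one subinterval is an admissible initial datum for the next. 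Uniqueness follows from the abstract uniqueness statement on each subinterval.

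The main obstacle I expect is Step 2: making the freezing-of-coefficients and localization argument work in the \emph{non-autonomous, merely time-measurable} setting, where one cannot appeal to the Weis-type operator-theoretic characterization pointwise in $t$. Concretely, the perturbation must be controlled in a way compatible with the singular-integral representation underlying Theorem \ref{teostep1generalized} --- i.e.\ one needs the perturbed family still to satisfy the abstract ellipticity/commutation-up-to-small-error structure, and one needs the constants in the perturbation series to be uniform in $t$. This forces careful bookkeeping of how the modulus of continuity of the top-order coefficients, the size $\delta$ of the localization balls, and the shift parameter $\lambda$ interact, and it is where the hypothesis ``continuous in space, measurable in time'' (condition (C)) is used in an essential way. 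The lower-order terms are comparatively harmless: they are handled by a standard relative-boundedness-with-small-constant estimate via interpolation between $L^q$ and $W^{m,q}$, at the cost of enlarging $\lambda$. Finally, identifying the trace space as the Besov space $B^{s}_{q,p}$ with $s=m(1-1/p)$ is classical real interpolation, $(L^q(\R^d), W^{m,q}(\R^d))_{1-1/p,p} = B^{m(1-1/p)}_{q,p}(\R^d)$, and requires no new ideas.
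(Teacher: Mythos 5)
Your overall plan is the same as the paper's: reduce, via the abstract result Theorem~\ref{teostep1generalized}, to the case of $x$-independent top-order coefficients, then freeze coefficients and localize in space to pass to $x$-dependent coefficients, and finally use the trace theory (Proposition~\ref{prop:initialvalue}, Example~\ref{ex:chtrace}) to identify $(L^q,W^{m,q})_{1-1/p,p}=B^{m(1-1/p)}_{q,p}$ and allow nonzero initial values. The paper's proof of the theorem as stated is a two-line reduction to Theorem~\ref{teomaxreghigherorder}, and it is the latter that contains the work you are describing.

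However, there is a genuine gap in your Step~2: the patching argument via a partition of unity $(\varphi_j)$ does not work for arbitrary $p,q\in(1,\infty)$. To pass from the per-piece estimate $\|\nabla^m(\varphi_j u)\|_{L^p(L^q)}\leq C\|\varphi_j f+\text{comm.}\|_{L^p(L^q)}$ to the global one, you must commute the sum over $j$ (an $\ell^q$-type quantity, since $\sum_j\varphi_j^q=1$) with the $L^p$-norm in time. By Minkowski this goes the right way precisely when $q\leq p$; for $p<q$ it fails, and you would not recover the full range claimed. The paper resolves this by carrying out the localization argument \emph{only for $p=q$}, but in the weighted space $L^q(\R,v;L^q(\R^d,w))$ for \emph{all} $v\in A_q$, with $A_q$-consistent constants; it then applies the Rubio de Francia extrapolation theorem (Theorem~\ref{thm:baseweight}) in the \emph{time} variable to upgrade $L^q(\R,v)$ with $v\in A_q$ to $L^p(\R,v)$ with $v\in A_p$ for every $p\in(1,\infty)$. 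This is the fourth step of the proof of Theorem~\ref{teomaxreghigherorder} and is precisely what makes the theorem the first full $L^p(L^q)$ result for such time-measurable problems, as the authors point out in Remark~\ref{rem:discussionHHKK}. Your proposal invokes $A_q$-weights in space (for the $\Rr$-boundedness in the $x$-independent step), but never introduces $A_p$-weights in time nor the extrapolation step, so as written it only gives (at best) $q\leq p$. A secondary point: the paper actually uses a continuous partition of unity ($\int_{\R^d}|\nabla^m u(t,x)\phi(x-\xi)|^q\,d\xi$) rather than a discrete one, which makes the $A_q$-consistency bookkeeping cleaner; and the reference operator $A_0$ must be taken as $\delta(-\Delta)^{m/2}$ with $\delta$ \emph{small}, so that $A(t)-A_0$ remains elliptic of the right angle --- your choice $A_0=(-\Delta)^{m/2}$ without the small $\delta$ would not guarantee this. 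Finally, no iteration in time is needed: the abstract result is on all of $\R$, and the shift $\lambda$ is absorbed by $e^{\lambda t}$ on any bounded interval.
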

The conditions on $f$ and $u_0$ are also necessary in the above result. Here $B^{s}_{q,p}(\R^d)$ denotes the usual Besov space (see \cite{Tr1} for details). The proof of Theorem \ref{thm:2mintro} is given at the end of Section \ref{sec:2mell}. It will be derived from Theorem \ref{teomaxreghigherorder} which is a maximal regularity result with weights in time and space. One can also consider systems instead of \eqref{eq:introCauchy}. The results in this case are more complicated and will be presented in \cite{GV2}.

\medskip

\textbf{Overview}
In Section \ref{secprelimin} we discuss preliminaries on weights, $\Rr$-boundedness and functional calculus. In Section \ref{sec:sing} we prove the $L^p$-boundedness of a new class of singular integrals. The main result on maximal $L^p$-regularity is presented in Section \ref{sec:maxLp}. In Section \ref{sec:2mell} we show how to use our new approach to derive maximal $L^p$-regularity for \eqref{eq:introCauchy}. Finally in Section \ref{sec:Quasi} we extend the result of \cite{CleLi} and \cite{Pruss02} on quasi-linear equations to the time-dependent setting.

\medskip

\textbf{Notation} Throughout this paper we will write $\calL(X,Y)$ for the space of all bounded linear operators mapping $X$ into $Y$. In the estimates below, $C$ can denote a constant which varies from line to line. We set $\N = \{1, 2, 3, \cdots\}$ and $\N_0 = \N\cup\{0\}$.

\medskip

\textbf{Acknowledgement} The authors thank Doyoon Kim for pointing out the references \cite{Kim10pot, Krylovheat}.
We also thank Nick Lindemulder and Jan Rozendaal for careful reading and useful comments. Finally, we thank the referee for his/her kind suggestions to improve the paper.

\section{Preliminaries}\label{secprelimin}

\subsection{$A_p$-weights}

Details on $A_p$-weights can be found in \cite[Chapter 9]{GrafakosModern} and \cite[Chapter V]{SteinHA}.

A {\em weight} is a locally integrable function on $\R^d$ with $w(x)\in (0,\infty)$ for a.e.\ $x \in \R^d$.
For a Banach space $X$ and $p\in [1, \infty]$, $L^p(\R^d,w;X)$ is the space of all strongly measurable functions $f:\R^d\to X$ such that
\begin{align*}
\|f\|_{L^p(\R^d,w;X)}=\Big(\int_{\R^d} \|f(x)\|^p w(x) \, dx\Big)^\frac{1}{p}<\infty \ \ \text{if $p\in [1, \infty)$},
\end{align*}
and $\displaystyle \|f\|_{L^\infty(\R^d,w;X)} = \esssup_{x\in \R^d} \|f(x)\|$.

For $p\in (1, \infty)$ a weight $w$ is said to be an {\em $A_{p}$-weight} if
\begin{align*}
   [w]_{A_{p}}=\sup_{Q} \avint_Q w(x) \, dx \Big(\avint_Q w(x)^{-\frac{1}{p-1}}\, dx \Big)^{p-1}<\infty.
\end{align*}
Here the supremum is taken over all cubes $Q\subseteq \R^d$ with axes parallel to the coordinate axes and $\avint_{Q} = \frac{1}{|Q|}\int_{Q}$. The extended real number $[w]_{A_{p}}$ is called the {\em $A_p$-constant}. The Hardy-Littlewood maximal operator is defined as
\begin{equation*}
  M(f)(x)=\sup_{Q \ni x}\avint_{Q}|f(y)|\, dy, \ \ \  f \in L^p(\R^d,w)
  \end{equation*}
with $Q\subseteq \R^d$ cubes as before. Recall that $w \in A_{p}$ if and only if the Hardy-Littlewood maximal operator $M$ is bounded on $L^p(\R^d,w)$.

The following simple extension of the extrapolation result from \cite[Theorem 3.9]{CMP} will be needed.
\begin{theorem}[Extrapolation]\label{thm:baseweight}
For every $\lambda\geq 0$, let $f_{\lambda},g_{\lambda}:\R^d \to \R_+$ be a pair of nonnegative, measurable functions and suppose that for some $p_0\in(1,\infty)$ there exist increasing functions $\alpha_{p_0}$, $\beta_{p_0}$ on $\R_+$ such that for all $w_0 \in A_{p_0}$ and all $\lambda\geq \beta_{p_0}([w_0]_{A_{p_0}})$,
\begin{equation}\label{eq:fgp0}
\|f_{\lambda}\|_{L^{p_0}(\R^d,w_0)} \leq \alpha_{p_0}([w_0]_{A_{p_0}}) \|g_{\lambda}\|_{L^{p_0}(\R^d,w_0)}.
\end{equation}
Then for all $p \in (1,\infty)$ there is a constant $c_{p,d}\geq 1$ such that for all $w \in A_{p}$, and all $\lambda\geq \beta_{p_0}(\phi([w]_{A_p}))$
\begin{equation*}
\|f_{\lambda}\|_{L^p(\R^d,w)}\leq 4 \alpha_{p_0}(\phi([w]_{A_p})) \|g_{\lambda}\|_{L^p(\R^d,w)},
\end{equation*}
where $\phi(x) = c_{p,d} x^{\frac{p_0-1}{p-1}+1}$.
\end{theorem}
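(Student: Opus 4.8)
The substance here is the Rubio de Francia extrapolation theorem in the quantitative form of \cite[Theorem 3.9]{CMP}; the genuinely new points are the presence of the parameter $\lambda$ together with the threshold $\beta_{p_0}$, and the fact that the power of $[w]_{A_p}$ in the auxiliary $A_{p_0}$-weight is made explicit. The plan is therefore to re-run the proof of \cite[Theorem 3.9]{CMP} while keeping track of the dependence of every quantity on $[w]_{A_p}$, and to treat the family $\{(f_\lambda,g_\lambda)\}_{\lambda\geq0}$ one index at a time. So fix $p\in(1,\infty)$, $\lambda\geq0$ and $w\in A_p$; we may assume $\|g_\lambda\|_{L^p(\R^d,w)}<\infty$, and no preliminary truncation is needed since $f_\lambda$ and $g_\lambda$ are fixed functions rather than the output of an operator. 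The case $p=p_0$ is immediate: take $w_0=w$; since $\phi(x)=c_{p,d}x^{2}\geq x$ for $x\geq1$ and $\beta_{p_0}$ is increasing we have $\lambda\geq\beta_{p_0}(\phi([w]_{A_p}))\geq\beta_{p_0}([w]_{A_{p_0}})$, so \eqref{eq:fgp0} applies, and then $\alpha_{p_0}([w]_{A_{p_0}})\leq\alpha_{p_0}(\phi([w]_{A_p}))$ because $\alpha_{p_0}$ is increasing; this even gives the estimate without the factor $4$.

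For $p\neq p_0$ I would argue by duality. In the case $p>p_0$, write
\begin{equation*}
\|f_\lambda\|_{L^p(\R^d,w)}^{p_0}=\|f_\lambda^{p_0}\|_{L^{p/p_0}(\R^d,w)}=\sup\Big\{\int_{\R^d}f_\lambda^{p_0}\,h\,w\,dx:\ h\geq0,\ \|h\|_{L^{(p/p_0)'}(\R^d,w)}\leq1\Big\},
\end{equation*}
and the case $p<p_0$ is treated similarly in \cite[Theorem 3.9]{CMP} (with the roles of $p$ and $p_0$ interchanged). The core construction, which is exactly what \cite[Theorem 3.9]{CMP} supplies, is that to each admissible test function $h$ one associates, via the Rubio de Francia iteration operator built from the Hardy--Littlewood maximal operator $M$, a weight $w_0=w_0(h)\in A_{p_0}$ with
\begin{equation*}
\int_{\R^d}f_\lambda^{p_0}\,h\,w\,dx\leq\int_{\R^d}f_\lambda^{p_0}\,w_0\,dx
\qquad\text{and}\qquad
\|g_\lambda\|_{L^{p_0}(\R^d,w_0)}\leq 2\,\|g_\lambda\|_{L^p(\R^d,w)}.
\end{equation*}

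The quantitative heart of the matter is the control of $[w_0]_{A_{p_0}}$. Inspecting the construction, together with the sharp weighted bound $\|M\|_{\calL(L^{s}(\R^d,v))}\leq C_{s,d}\,[v]_{A_s}^{1/(s-1)}$ and the duality identity $[v^{1-s'}]_{A_{s'}}=[v]_{A_s}^{1/(s-1)}$, one obtains
\begin{equation*}
[w_0]_{A_{p_0}}\leq c_{p,d}\,[w]_{A_p}^{\max\{1,\,(p_0-1)/(p-1)\}}\leq c_{p,d}\,[w]_{A_p}^{\frac{p_0-1}{p-1}+1}=\phi([w]_{A_p}),
\end{equation*}
where in the last step we used $[w]_{A_p}\geq1$ and $c_{p,d}\geq1$. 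The exponent $\max\{1,(p_0-1)/(p-1)\}$ is the one appearing in sharp Rubio de Francia extrapolation (the value $1$ comes from the case $p\geq p_0$ and $(p_0-1)/(p-1)$ from $p<p_0$); replacing it by the larger $\frac{p_0-1}{p-1}+1$ is harmless and lets a single $\phi$ serve every $p\in(1,\infty)$.

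Granting the above, the rest is bookkeeping. For each $h$, the weight $w_0(h)$ satisfies $[w_0]_{A_{p_0}}\leq\phi([w]_{A_p})$, hence $\lambda\geq\beta_{p_0}(\phi([w]_{A_p}))\geq\beta_{p_0}([w_0]_{A_{p_0}})$ because $\beta_{p_0}$ is increasing, so \eqref{eq:fgp0} applies to $(f_\lambda,g_\lambda)$ with weight $w_0$; and $\alpha_{p_0}([w_0]_{A_{p_0}})\leq\alpha_{p_0}(\phi([w]_{A_p}))$ since $\alpha_{p_0}$ is increasing. Combining these with the two displayed comparisons and taking the supremum over $h$ (together with the parallel steps when $p<p_0$) yields $\|f_\lambda\|_{L^p(\R^d,w)}\leq 4\,\alpha_{p_0}(\phi([w]_{A_p}))\,\|g_\lambda\|_{L^p(\R^d,w)}$, the constant $4$ absorbing the numerical factors from the Rubio de Francia iteration. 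I expect the only real obstacle to be the quantitative step — verifying that the Rubio de Francia weight can be taken with $A_{p_0}$-characteristic bounded by a fixed power of $[w]_{A_p}$ with exponent at most $\frac{p_0-1}{p-1}+1$; once that is settled, monotonicity of $\alpha_{p_0}$ and $\beta_{p_0}$ takes care of everything else, the threshold on $\lambda$ in particular.
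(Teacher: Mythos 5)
Your proposal matches the paper's own argument: the paper simply observes that one can track $[w_0]_{A_{p_0}}$ through the proof of \cite[Theorem 3.9]{CMP} and verify $[w_0]_{A_{p_0}}\leq\phi([w]_{A_p})$, after which monotonicity of $\alpha_{p_0}$ and $\beta_{p_0}$ gives the $\lambda$-uniform statement. Your account spells out the duality step and the Rubio de Francia iteration a little more explicitly, but the route and the key quantitative observation are identical.
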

Note that \cite[Theorem 3.9]{CMP} corresponds to the case that $f_{\lambda}$ and $g_{\lambda}$ are constant in $\lambda$. To obtain the above extension one can check that in the proof \cite[Theorem 3.9]{CMP} for given $p$ and $w\in A_p$, the $A_{p_0}$-weight $w_0$ which is constructed satisfies $[w_0]_{A_{p_0}} \leq \phi([w]_{A_p})$. This clarifies the restriction on the $\lambda$'s.

Below estimates of the form \eqref{eq:fgp0} with increasing function $\alpha_{p_0}$ will appear frequently. In this situation we say there is an {\em $A_{p_0}$-consistent} constant $C$ such that
\[\|f\|_{L^{p_0}(\R^d,w_0)} \leq C\|g\|_{L^{p_0}(\R^d,w_0)}.\]
Note that the $L^p$-estimate obtained in Theorem \ref{thm:baseweight} is again $A_p$-consistent for all $p\in (1, \infty)$.

The following simple observation will be applied frequently. For a bounded Borel set $A\subset\R^d$ and for every $f\in L^p(\R^d,w;X)$ one has $\one_A f\in L^1(\R^d;X)$ and by  H\"older's inequality
\[\|\one_A f\|_{L^1(\R^d;X)}\leq C_{w,A} \|f\|_{L^p(\R^d,w;X)}.\]

A linear subspace $Y\subseteq X^*$ is said to be norming for $X$ if for all $x\in X$, $\|x\| = \sup\{|\lb x, x^*\rb|: x^*\in Y, \|x^*\|\leq 1\}$. The following simple duality lemma will be needed.
\begin{lemma}\label{lem:norming}
Let $p,p'\in [1, \infty]$ be such that $\frac{1}{p} + \frac{1}{p'} = 1$. Let $v$ be a weight and let $v' = v^{-\frac{1}{p-1}}$. Let $Y\subseteq X^*$ be a subspace which is norming for $X$. Then setting
\[\lb f, g\rb = \int_{\R} \lb f(t), g(t)\rb \, dt,  \ \  f\in L^p(\R,v;X), \ \  g\in L^{p'}(\R,v';X^*),\]
the space $L^{p'}(\R,v';X^*)$ can be isometrically identified with a closed subspace of $L^p(\R,v;X)^*$.
Moreover, $L^{p'}(\R,v';Y)$ is norming for $L^p(\R,v;X)$.
\end{lemma}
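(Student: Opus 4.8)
The statement has two parts: an isometric embedding $L^{p'}(\R,v';X^*)\embed L^p(\R,v;X)^*$, and the norming property of $L^{p'}(\R,v';Y)$. I would treat the embedding first. Given $g\in L^{p'}(\R,v';X^*)$, the pairing $\lb f,g\rb = \int_\R \lb f(t),g(t)\rb\,dt$ is well-defined for $f\in L^p(\R,v;X)$: writing $\lb f(t),g(t)\rb = \lb v(t)^{1/p}f(t), v(t)^{-1/p}g(t)\rb$ and noting $v(t)^{-1/p} = v'(t)^{1/p'}$ (since $v' = v^{-1/(p-1)}$, so $v'^{1/p'} = v^{-1/(p'(p-1))} = v^{-1/p}$), the integrand is bounded by $\|v^{1/p}f\|\,\|v'^{1/p'}g\|$ pointwise, and Hölder in the unweighted $L^p$–$L^{p'}$ duality gives $|\lb f,g\rb|\le \|f\|_{L^p(\R,v;X)}\|g\|_{L^{p'}(\R,v';X^*)}$. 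Thus $g$ induces a bounded functional of norm at most $\|g\|_{L^{p'}(\R,v';X^*)}$. For the reverse inequality (isometry), I would test against well-chosen $f$: for $g$ with $\|g\|_{L^{p'}(\R,v';X^*)}=1$, pick a strongly measurable $x^*$-selection realizing $\|g(t)\|$ up to $\epsilon$ via a measurable norming element $x(t)\in X$, $\|x(t)\|\le 1$, with $\lb x(t),g(t)\rb \ge (1-\epsilon)\|g(t)\|$; then set $f(t) = v(t)^{-1}\|g(t)\|^{p'-1} x(t)$ (when $p'<\infty$) and check $\|f\|_{L^p(\R,v;X)}\le 1$ by the same weight bookkeeping, while $\lb f,g\rb \ge (1-\epsilon)$. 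The case $p'=\infty$ (i.e. $p=1$) is handled separately by concentrating on sets where $\|g(t)\|$ is close to its essential sup. Closedness of the image is automatic since the map is isometric.

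For the norming statement, I want: for every $f\in L^p(\R,v;X)$, $\|f\|_{L^p(\R,v;X)} = \sup\{|\lb f,g\rb| : g\in L^{p'}(\R,v';Y),\ \|g\|_{L^{p'}(\R,v';X^*)}\le 1\}$. The $\le$ direction is the Hölder estimate above. For $\ge$, since $Y$ is norming for $X$, for a.e.\ $t$ and any $\epsilon>0$ there is $y(t)\in Y$ with $\|y(t)\|\le 1$ and $\lb f(t),y(t)\rb \ge (1-\epsilon)\|f(t)\|$. The only subtlety is measurability of $t\mapsto y(t)$; I would obtain it by a standard measurable selection argument — e.g.\ approximate $f$ by a countably-valued simple function $f_n\to f$ in norm and a.e., choose for each value of $f_n$ a single norming $y\in Y$, producing a countably-valued $g_n$, and check $\lb f,g_n\rb$ converges appropriately — or invoke the separability of the range of $f$. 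Then $g(t) := \|f(t)\|^{p-1} v(t)\, y(t)$ (for $p<\infty$; a level-set truncation for $p=\infty$) lies in $L^{p'}(\R,v';Y)$ with the right normalization after the weight computation $v \cdot v'^{\,-1/p'} \cdot \text{(powers)}$, and $\lb f,g\rb\ge (1-\epsilon)\|f\|_{L^p(\R,v;X)}^{p}\big/\|f\|^{p-1}$... so after normalizing I get the bound up to $\epsilon$, and let $\epsilon\downarrow 0$.

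The main obstacle is not any deep estimate — everything reduces to Hölder's inequality and the identity $v^{-1/p} = v'^{1/p'}$ — but rather the \emph{measurability of the pointwise norming selection} $t\mapsto y(t)\in Y$ (and $t\mapsto x(t)\in X$ in the isometry part), together with cleanly handling the endpoint exponents $p=1$ and $p=\infty$ where the optimizing function is a normalized indicator of a near-extremal level set rather than a power of $\|f(t)\|$. Once measurability is secured via approximation by simple functions (using that $f$ has essentially separable range), the rest is routine weight bookkeeping, so I would state the selection step carefully and keep the computations terse.
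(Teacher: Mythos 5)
The paper states this lemma without proof (it is called a ``simple duality lemma''), so there is no in-paper argument to compare against; your outline is the standard argument and is essentially sound. The embedding follows from H\"older together with the identity $v^{-1/p}=v'^{1/p'}$, and both the isometry and the norming property follow from a measurable norming selection, obtained exactly as you say by approximating $f$ (resp. $g$) by countably-valued simple functions (legitimate since strongly measurable functions are essentially separably valued), plus the weight bookkeeping for the optimizing function.

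One computational slip worth fixing: in the isometry step the choice $f(t)=v(t)^{-1}\|g(t)\|^{p'-1}x(t)$ does \emph{not} give $\|f\|_{L^p(\R,v;X)}\le 1$ except when $p=2$. The correct normalization is $f(t)=v'(t)\|g(t)\|^{p'-1}x(t)$, for which
\[\int \|f(t)\|^p v(t)\,dt\le\int v'(t)^p v(t)\,\|g(t)\|^{p(p'-1)}\,dt=\int v'(t)\|g(t)\|^{p'}\,dt,\]
using $p(p'-1)=p'$ and $v'^{\,p}v=v^{-p/(p-1)+1}=v^{-1/(p-1)}=v'$. This is the mirror of the (correct) dual formula $g(t)=\|f(t)\|^{p-1}v(t)\,y(t)$ you give in the norming step, where the relevant identity is $v^{p'}v'=v$. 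Your ``$\le$'' and ``$\ge$'' labels for the two halves of the norming identity are also swapped (H\"older gives $\sup\le\|f\|$, the construction gives the reverse), but that is a transparent typo. With these repairs the proposal is a correct proof of the lemma.
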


\subsection{$\Rr$-boundedness and integral operators\label{sec:Rbdd}}

In this section we recall the definition of $\Rr$-boundedness (see \cite{CPSW, DHP, KW} for details).

A sequence of independent random variables $(r_{n})_{n\geq 1}$ on a probability space $(\Omega, \mathscr{A}, \mathbb{P})$ is called a {\em Rademacher sequence} if $\P(r_n = 1) = \P(r_n  = -1) = \frac12$.

Let $X$ and $Y$ be Banach spaces. A family of operators $\T \subseteq \calL(X,Y)$ is said to be {\em $\Rr$-bounded} if there exists a constant $C$ such that for all $N\in \N$, all sequences $(T_n)_{n=1}^ N$ in $\T$ and $(x_n)_{n=1}^N$ in $X$,
\begin{align}\label{eq:Rbdd}
\Big\|\sum_{n=1}^N r_n T_n x_n \Big\|_{L^2(\Omega;Y)}\leq C\Big\|\sum_{n=1}^N r_n x_n \Big\|_{L^2(\Omega;X)}
\end{align}
The least possible constant $C$ is called the {\em $\Rr$-bound} of $\T$ and is
denoted by $\Rr(\T)$. Recall the Kahane-Khintchine inequalities (see \cite[11.1]{DJT}): for every $p,q\in (0, \infty)$, there exists a $\kappa_{p,q}>0$ such that
\begin{align}\label{eq:KahKin}
\Big\|\sum_{n=1}^N r_n x_n\Big\|_{L^p(\Omega;X)} \leq \kappa_{p,q}\Big\|\sum_{n=1}^N r_n x_n\Big\|_{L^q(\Omega;X)}.
\end{align}
Therefore, the $L^2(\Omega;X)$-norms in \eqref{eq:Rbdd} can be replaced by $L^p(\Omega;X)$, to obtain an equivalent definition up to a constant depending on $p$.

Every $\Rr$-bounded family of operators is uniformly bounded. A converse holds for Hilbert spaces $X$ and $Y$: every uniform bounded family of operators is automatically $\Rr$-bounded.

The $\Rr$-boundedness of a certain family of integral operators plays a crucial role in this paper. Let $\calK$ be the class of kernels $k\in L^1(\R)$ for which $|k|*f\leq Mf$ for all simple functions $f:\R\to \R_+$, where $M$
denotes the Hardy-Littlewood maximal operator. The next example gives an important class of kernels which are in $\calK$.

\begin{example}\label{ex:kernel}
Let $k:(0,\infty)\times \R\to \C$, be such that $|k(u,t)|\leq h(\tfrac{|t|}{u})\frac{1}{u},\ u>0$, where $h\in L^{1}(\R_{+})\cap C_b(\R_+)$, $h$ has a maximum in $x_{0}\in [0,\infty)$ and $h$ is radially decreasing on $[x_{0},\infty)$. Then,
\begin{align*}
\int_{0}^{\infty}\sup_{|t|\geq x}|k(u,t)| \, dx&\leq \int_{0}^{\infty}\sup_{t\geq x}|h(\tfrac{t}{u})| \, \frac{ dx}{u}
=\int_{0}^{\infty}\sup_{s\geq \frac{x}{u}}|h(s)|\, \frac{dx}{u}
=\int_{0}^{\infty}\sup_{s\geq y}|h(s)|\, dy
\\ &=\int_{0}^{x_{0}}\sup_{s\geq y}|h(s)| \, dy+\int_{x_{0}}^{\infty} |h(y)| \, dy
 =x_{0} |h(x_{0})|+\|h\|_{L^{1}(x_0,\infty)}.
\end{align*}
Now by \cite[Proposition 4.5]{NVWR} we find $\{\frac{k(u,\cdot)}{C}:u>0\}\subseteq \calK$ with $C = x_{0} |h(x_{0})|+\|h\|_{L^{1}(x_0,\infty)}$.
\end{example}

Suppose $T:\{(t,s)\in \R^2: t\neq s\}\to \calL(X)$ is such that for all $x\in X$, $(t,s)\mapsto T(t,s)x$ is measurable. For $k\in \calK$ let
\begin{equation}\label{eq:IkTdefprelim}
I_{k T} f(t) = \int_{\R} k(t-s) T(t,s) f(s)\, ds.
\end{equation}
Consider the family of integral operators $\I:=\{I_{k T}: k\in \calK\}\subseteq\calL(L^{p}(\R;X))$. The $\Rr$-boundedness of such families $\I$ of operators will play an important role in Section \ref{sec:sing}.

\begin{proposition}\label{prop:uniformTts}
If $\{T(t,s):s,t\in\R\}$ is uniformly bounded on $X$, then $\I$ is uniformly bounded on $L^p(\R,v;X)$ for every $p\in (1, \infty)$ and $v\in A_p$. Moreover, it is also uniform bounded on $L^1(\R;X)$.
\end{proposition}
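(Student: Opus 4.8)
The plan is to reduce the whole statement to the scalar maximal-function inequality that is built into the definition of $\calK$, using only the uniform bound $C_T := \sup_{s,t\in\R}\|T(t,s)\|_{\calL(X)}<\infty$. Fix $k\in\calK$ and $f\in L^p(\R,v;X)$ (resp.\ $f\in L^1(\R;X)$). First I would check that $(t,s)\mapsto k(t-s)T(t,s)f(s)$ is strongly measurable: approximating $f$ pointwise a.e.\ by simple functions $f_n$, each $T(\cdot,\cdot)f_n(\cdot)$ is strongly measurable by the hypothesis on $T$, and $T(t,s)f_n(s)\to T(t,s)f(s)$ a.e.\ by the bound $\|T(t,s)\|\le C_T$. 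Writing $g:=\|f(\cdot)\|_X$ one then has the pointwise estimate
\[
\|I_{kT}f(t)\|_X \le C_T\int_\R |k(t-s)|\,\|f(s)\|_X\,ds = C_T\,(|k|*g)(t).
\]

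Next I would extend the inequality $|k|*g\le Mg$ — which by definition of $\calK$ holds for simple $g\ge0$ — to arbitrary measurable $g:\R\to\R_+$: choosing simple functions $g_n\uparrow g$, monotone convergence gives $|k|*g_n\uparrow|k|*g$ while $|k|*g_n\le Mg_n\le Mg$, hence $|k|*g\le Mg$. Combined with the previous display this yields $\|I_{kT}f(t)\|_X\le C_T\,(Mg)(t)$ for a.e.\ $t$; in particular the defining integral converges absolutely a.e.\ and $I_{kT}f$ is well defined and strongly measurable. For $p\in(1,\infty)$ and $v\in A_p$ the Hardy--Littlewood maximal operator is bounded on $L^p(\R,v)$, so
\[
\|I_{kT}f\|_{L^p(\R,v;X)} \le C_T\,\|Mg\|_{L^p(\R,v)} \le C_T\,\|M\|_{\calL(L^p(\R,v))}\,\|g\|_{L^p(\R,v)} = C_T\,\|M\|_{\calL(L^p(\R,v))}\,\|f\|_{L^p(\R,v;X)},
\]
and the constant on the right is independent of $k$; this gives uniform boundedness of $\I$ on $L^p(\R,v;X)$.

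For $p=1$ the maximal operator is no longer bounded, so here I would argue directly via Tonelli and Young's inequality:
\[
\|I_{kT}f\|_{L^1(\R;X)} \le C_T\int_\R\int_\R|k(t-s)|\,\|f(s)\|_X\,ds\,dt = C_T\,\|k\|_{L^1(\R)}\,\|f\|_{L^1(\R;X)},
\]
so it remains to note that $\sup_{k\in\calK}\|k\|_{L^1(\R)}\le1$: applying $|k|*f\le Mf$ to $f=\one_{[-R,R]}$ and evaluating at $t=0$ gives $\int_{-R}^{R}|k(u)|\,du\le (Mf)(0)\le\|f\|_\infty=1$ for every $R>0$, whence $\|k\|_{L^1(\R)}\le1$. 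This establishes uniform boundedness of $\I$ on $L^1(\R;X)$.

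I do not expect a genuine obstacle here: the argument is a routine transference of the scalar estimate. The only points needing mild care are the strong measurability of $I_{kT}f$ (handled by simple-function approximation together with the hypothesis on $T$) and the passage from simple to general nonnegative $g$ in the defining property of $\calK$ (handled by monotone convergence), together with the elementary observation that kernels in $\calK$ have $L^1$-norm at most $1$.
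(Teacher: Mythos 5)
Your argument is correct and follows essentially the same route as the paper's proof: bound $\|I_{kT}f(t)\|_X$ by $C_T\,|k|*\|f\|_X(t)\le C_T\,M(\|f\|_X)(t)$ and invoke weighted boundedness of the maximal operator for $p\in(1,\infty)$, and Fubini together with $\|k\|_{L^1(\R)}\le 1$ for $p=1$. You merely spell out a few points the paper leaves implicit or cites (strong measurability, the monotone-convergence extension of the defining inequality of $\calK$ from simple to general nonnegative $g$, and the elementary proof that $\|k\|_{L^1}\le 1$), all of which are correct.
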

\begin{proof}
For any $p\in (1, \infty)$, note that
\begin{align*}
\|I_{kT} f(t)\|_{X} & \leq \int_{\R} |k(t-s)| \|T(t,s)  f(s)\|_{X} \, ds
\\ & \leq C \int_{\R} |k(t-s)| \|f(s)\|_{X} \, ds \leq C M (\|f\|_{X})(t).
\end{align*}
for a.e.\ $t\in \R$. Therefore the uniform boundedness of $I_{kT}$ follows from the boundedness of the maximal operator. The case $v\equiv 1$ and $p=1$ follows from Fubini's theorem and the fact that $\|k\|_{L^1(\R)}\leq 1$ (see \cite[Lemma 4.3]{NVWR}).
\end{proof}

The $\Rr$-boundedness of \eqref{eq:IkTdefprelim} has the following simple extrapolation property:

\begin{proposition}\label{prop:pindRbdd}
Let $p_0\in (1, \infty)$. If for all $v\in A_{p_0}$, $\I\subseteq \calL(L^{p_0}(\R,v;X))$ is $\Rr$-bounded by a constant which is $A_{p_0}$-consistent, then for every $p\in (1, \infty)$ and $v\in A_p$, $\I\subseteq \calL(L^{p}(\R,v;X))$ is $\Rr$-bounded by a constant which is $A_p$-consistent.
\end{proposition}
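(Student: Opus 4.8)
The plan is to deduce the $p$-independence of $\Rr$-boundedness from the scalar extrapolation theorem (Theorem~\ref{thm:baseweight}) by testing the $\Rr$-bound against Rademacher sums, exactly as one does for Calder\'on--Zygmund operators, but keeping careful track of $A_p$-consistency. First I would fix $p\in(1,\infty)$ and $v\in A_p$, and a finite family $k_1,\dots,k_N\in\calK$ together with vectors $x_1,\dots,x_N\in X$ and points-valued data; more precisely, to establish \eqref{eq:Rbdd} for the family $\I$ on $L^p(\R,v;X)$ one must bound $\big\|\sum_{n=1}^N r_n I_{k_nT} f_n\big\|_{L^p(\Omega;L^p(\R,v;X))}$ by a constant times $\big\|\sum_{n=1}^N r_n f_n\big\|_{L^p(\Omega;L^p(\R,v;X))}$ for arbitrary $f_1,\dots,f_N\in L^p(\R,v;X)$. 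By Fubini this is an estimate in $L^p(\R\times\Omega, v\otimes\P;X)$, i.e.\ an estimate for a single (vector-valued) function on the product measure space.

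The key step is to introduce, for each $\omega\in\Omega$, the scalar functions
\[
F_\omega(t) = \Big\|\sum_{n=1}^N r_n(\omega) I_{k_nT}f_n(t)\Big\|_X,
\qquad
G_\omega(t) = M\Big(\Big\|\sum_{n=1}^N r_n(\omega) f_n\Big\|_X\Big)(t),
\]
and to apply the \emph{scalar} hypothesis at the exponent $p_0$: because each $k_n\in\calK$ satisfies $|k_n|*g\le Mg$ for simple $g\ge0$, one gets a pointwise domination allowing one to run the argument of Proposition~\ref{prop:uniformTts}, except that here the relevant bound is not merely uniform but $\Rr$-bounded and $A_{p_0}$-consistent by assumption. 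The precise mechanism: the hypothesis that $\I$ is $\Rr$-bounded on $L^{p_0}(\R,w_0;X)$ by an $A_{p_0}$-consistent constant $\alpha_{p_0}([w_0]_{A_{p_0}})$ says exactly that the pair $\big(F_\omega,\ \big\|\sum_n r_n(\omega)f_n\big\|_X\big)$, or rather their $L^{p_0}(\Omega)$-averages in $\omega$ via Kahane--Khintchine \eqref{eq:KahKin}, satisfy a weighted $L^{p_0}$-inequality of the form \eqref{eq:fgp0} with constants depending only on $[w_0]_{A_{p_0}}$. Thus with $f_\lambda := \big(\E\|\sum_n r_n I_{k_nT}f_n\|_X^{p_0}\big)^{1/p_0}$ and $g_\lambda := \big(\E\|\sum_n r_n f_n\|_X^{p_0}\big)^{1/p_0}$ (no actual $\lambda$-dependence is needed, so one takes $\beta_{p_0}\equiv 0$), one has \eqref{eq:fgp0} for every $w_0\in A_{p_0}$.

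Then I would invoke Theorem~\ref{thm:baseweight} to pass from $p_0$ to the given $p$: it yields
\[
\Big\|\Big(\E\big\|\textstyle\sum_n r_n I_{k_nT}f_n\big\|_X^{p_0}\Big)^{1/p_0}\Big\|_{L^p(\R,v)}
\le 4\,\alpha_{p_0}(\phi([v]_{A_p}))\,\Big\|\Big(\E\big\|\textstyle\sum_n r_n f_n\big\|_X^{p_0}\Big)^{1/p_0}\Big\|_{L^p(\R,v)},
\]
and crucially the new constant $4\alpha_{p_0}(\phi([v]_{A_p}))$ is again an increasing function of $[v]_{A_p}$, hence $A_p$-consistent, since $\phi$ is increasing and $\alpha_{p_0}$ is increasing. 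Finally one converts back: by Fubini the left side equals $\big\|(\E\|\cdots\|_{L^p(\R,v;X)}^{p_0})^{1/p_0}\big\|$-type quantity, and another application of Kahane--Khintchine \eqref{eq:KahKin} replaces the $L^{p_0}(\Omega)$ norm by the $L^p(\Omega)$ norm (at the cost of a constant $\kappa_{p,p_0}$ independent of the weight), giving precisely the $\Rr$-boundedness estimate \eqref{eq:Rbdd} for $\I$ on $L^p(\R,v;X)$ with an $A_p$-consistent bound. The main obstacle is bookkeeping rather than conceptual: one must be careful that the "functions" $f_\lambda,g_\lambda$ fed into the extrapolation theorem are genuinely measurable functions of the space variable only (the $\Omega$-average having already been taken), and that the constant produced is monotone in the weight characteristic so that $A_p$-consistency is preserved; the interchange of $\E$ and the spatial integral via Fubini, together with the two uses of Kahane--Khintchine to move freely between $L^{p_0}(\Omega)$ and $L^p(\Omega)$, is what makes the scheme work.
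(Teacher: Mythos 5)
Your proposal is essentially the paper's own proof. The paper also defines the scalar functions $F_{p_0}(t)=\big\|\sum_n r_n I_n f_n(t)\big\|_{L^{p_0}(\Omega;X)}$ and $G_{p_0}(t)=\big\|\sum_n r_n f_n(t)\big\|_{L^{p_0}(\Omega;X)}$, uses the hypothesis plus Fubini to get the weighted $L^{p_0}$-estimate, applies Theorem~\ref{thm:baseweight} (with the pair $(F_{p_0},G_{p_0})$, no $\lambda$-dependence needed) to pass to exponent $p$, and then uses Kahane--Khintchine twice to exchange $L^{p_0}(\Omega)$ and $L^p(\Omega)$ norms; your brief detour through the maximal-function domination $G_\omega=M(\cdots)$ is superfluous (you discard it in favour of the correct $\omega$-averaged quantities), and the paper even notes that the special structure of $\I$ plays no role in this argument.
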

\begin{proof}
The special structure of $\I$ will not be used in this proof. Let $I_1, \ldots, I_N\in \I$, $f_1, \ldots, f_N\in L^p(\R,v;X)$ and let
\[F_p(t) = \Big\|\sum_{n=1}^N r_n I_n f_n(t)\Big\|_{L^p(\Omega;X)} \  \ \text{and} \ \ G_p(t) = \Big\|\sum_{n=1}^N r_n f_n(t)\Big\|_{L^p(\Omega;X)}.\]
Then the assumption combined with Fubini's theorem yields that for all $v\in A_{p_0}$,
\[\|F_{p_0}\|_{L^{p_0}(\R,v)} \leq C \|G_{p_0}\|_{L^{p_0}(\R,v)},\]
where $C$ is a constant which is $A_{p_0}$-consistent. Therefore, by Theorem \ref{thm:baseweight} we find that for each $p\in(1, \infty)$, there is an $A_p$-consistent constant $C'$ (depending only on $C$) such that
\begin{equation}\label{eq:Fp0Gp0}
\|F_{p_0}\|_{L^{p}(\R,v)} \leq C' \|G_{p_0}\|_{L^{p}(\R,v)}.
\end{equation}
Now by \eqref{eq:KahKin}, $F_{p}\leq \kappa_{p, p_0} F_{p_0}$, $G_{p_0}\leq \kappa_{p_0, p}G_{p}$, and the result follows from \eqref{eq:Fp0Gp0} and another application of Fubini's theorem.
\end{proof}

In \cite{GLV} the following simple sufficient condition for $\Rr$-boundedness of such families was obtained in the case $X = L^q$.
\begin{theorem}\label{thm:weightedR}
Let $\mathcal{O}\subseteq \R^d$ be open. Let $q_0\in (1, \infty)$ and let $\{T(t,s):s,t\in \R\}$ be a family of bounded operators on $L^{q_0}(\mathcal{O})$. Assume that for all $A_{q_0}$-weights $w$,
\begin{equation}\label{eq:weightedcond}
\|T(t,s)\|_{\calL(L^{q_0}(\mathcal{O},w))}\leq C,
\end{equation}
where $C$ is $A_{q_0}$-consistent and independent of $t,s\in \R$. Then the family of integral operators
$\I = \{I_{k T}: k\in \calK\}\subseteq \calL(L^p(\R,v;L^q(\mathcal{O},w)))$ as defined in \eqref{eq:IkTdefprelim} is $\Rr$-bounded for all $p,q\in (1, \infty)$ and all $v\in A_p$ and $w\in A_q$.
Moreover, in this case the $\Rr$-bounds $\Rr(\I)$ are $A_{p}$- and $A_q$-consistent.
\end{theorem}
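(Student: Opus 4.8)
The plan is to reduce Theorem~\ref{thm:weightedR} to Theorem~\ref{thm:baseweight} and Proposition~\ref{prop:pindRbdd} by fixing the exponents wisely and doing the ``hard'' harmonic analysis only at one point. Observe that the claimed conclusion is $\Rr$-boundedness of $\I$ on $L^p(\R,v;L^q(\mathcal{O},w))$ for \emph{all} $p,q\in(1,\infty)$ and all $v\in A_p$, $w\in A_q$, with $A_p$- and $A_q$-consistent bounds. By Proposition~\ref{prop:pindRbdd} (applied with the space $X=L^q(\mathcal{O},w)$, $w\in A_q$ fixed), it suffices to prove the statement for a single value $p=p_0=q_0$ and all $v\in A_{q_0}$, with an $A_{q_0}$-consistent $\Rr$-bound; the propagation in the time exponent $p$ is then automatic. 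So I would first set $p=q_0$.

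Next I would exploit the fact that for the \emph{fixed} exponent $q_0$, the $L^{q_0}(\mathcal{O},w)$-norm of a function can be computed as a genuine integral, so that the $\Rr$-sum over $L^{q_0}$-valued functions becomes, via Fubini and the Kahane--Khintchine inequalities \eqref{eq:KahKin}, a scalar-valued $L^{q_0}(\R\times\mathcal{O}, v(t)w(x))$ estimate for the square function $\big(\sum_n |I_n f_n|^2\big)^{1/2}$. Concretely, with $F(t,x)=\big(\sum_{n=1}^N |I_{k_nT} f_n(t)(x)|^2\big)^{1/2}$ and $G(t,x)=\big(\sum_{n=1}^N |f_n(t)(x)|^2\big)^{1/2}$, the target is $\|F\|_{L^{q_0}(v\otimes w)}\lesssim \|G\|_{L^{q_0}(v\otimes w)}$ with a constant consistent in both $[v]_{A_{q_0}}$ and $[w]_{A_{q_0}}$. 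Now I would apply the hypothesis \eqref{eq:weightedcond}: for each fixed pair $t,s$, $T(t,s)$ is bounded on $L^{q_0}(\mathcal{O},w)$ with an $A_{q_0}(w)$-consistent norm uniformly in $t,s$. Pulling this through the $x$-integral and using $|k_n|*(\cdot)\le M(\cdot)$ for $k_n\in\calK$, one pointwise-in-$x$ dominates $\|I_{k_nT}f_n(t)(\cdot)\|_{L^{q_0}(\mathcal{O},w)}$-type quantities by the Hardy--Littlewood maximal operator (in the $t$-variable) acting on $\|f_n(s)(\cdot)\|$-type quantities. The scalar weighted vector-valued maximal inequality of Fefferman--Stein type on $L^{q_0}(\R,v;\ell^2)$ (valid since $v\in A_{q_0}$, with $A_{q_0}$-consistent constant) then closes the estimate.

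The one subtlety to handle carefully is the interchange of the $\ell^2$-square-function and the operator $T(t,s)$: the hypothesis only gives boundedness of the \emph{single} operator $T(t,s)$ on the weighted $L^{q_0}(\mathcal{O},w)$, not an a priori $\Rr$- or $\ell^2$-valued bound, so I would invoke the standard fact that on $L^{q}$-spaces a uniformly bounded family that is bounded on all weighted $L^{q}(w)$ with $A_q$-consistent bounds is automatically $\Rr$-bounded (this is precisely the weighted extrapolation characterization of $\Rr$-boundedness on $L^q$-scales, via Theorem~\ref{thm:baseweight}), applied to the family $\{T(t,s):t,s\in\R\}$; this upgrades \eqref{eq:weightedcond} to an $\ell^2$-valued estimate with the same consistency. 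After that, everything is a bookkeeping of constants to verify the final bound is simultaneously $A_p$- and $A_q$-consistent, which follows because each invoked result (Theorem~\ref{thm:baseweight}, the maximal inequality, the extrapolation characterization of $\Rr$-boundedness) produces consistent constants and these compose.

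I expect the main obstacle to be exactly that interchange step --- getting from ``$T(t,s)$ bounded on each $L^{q_0}(\mathcal{O},w)$'' to ``the square function of the $T(t,s)f_n$ is controlled by the square function of the $f_n$'' --- together with tracking that the resulting constant depends on $[w]_{A_q}$ only through an increasing function (so that a further application of Proposition~\ref{prop:pindRbdd}, now in the $q$-variable with $v\in A_p$ fixed, yields all $q\in(1,\infty)$). Everything else is routine Fubini, Kahane--Khintchine, and maximal-function bookkeeping.
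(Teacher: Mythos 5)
This theorem is not proved in the present paper; it is cited from \cite{GLV}, and the text only indicates that the proof there ``is based on extrapolation techniques of Rubio de Francia''. So there is no in-paper proof to compare against, and the review must assess your proposal on its own terms. Your outer plan (reduce in the time exponent via Proposition~\ref{prop:pindRbdd}, use the uniform weighted bounds on $T(t,s)$, extrapolate) is reasonable and shares the same spirit; however, there are two genuine gaps that are not bookkeeping issues.

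\textbf{Gap 1 (the core estimate at $p=q=q_0$).} Your central step dominates $\|I_{k_nT}f_n(t)\|_{L^{q_0}(\mathcal{O},w)}$ by the Hardy--Littlewood maximal function in $t$, then applies the Fefferman--Stein vector-valued maximal inequality. What this chain of estimates produces is a bound on $\big(\sum_n\|I_{k_nT}f_n(t)\|_{L^{q_0}(\mathcal{O},w)}^2\big)^{1/2}$ in $L^{q_0}(\R,v)$, i.e.\ the ``$\ell^2$ of $L^{q_0}$-norms''. The $\Rr$-bound, however, requires the ``$L^{q_0}$-norm of the pointwise $\ell^2$-square-function'' $\big\|\big(\sum_n|I_{k_nT}f_n(t,x)|^2\big)^{1/2}\big\|_{L^{q_0}(\mathcal{O},w)}$. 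These two quantities coincide only when $q_0=2$; for $q_0\neq 2$ they are not comparable in either direction, so the argument does not close. The $\ell^2$-upgrade of \eqref{eq:weightedcond} that you correctly propose (the weighted-extrapolation characterization of $\Rr$-boundedness on $L^q$-scales) does show $\{T(t,s)\}$ is $\Rr$-bounded on each $L^q(\mathcal{O},w)$, but this does not rescue the step: the operator $I_{kT}$ convolves in $t$ against $T(t,s)$ which itself depends on $t$, so the maximal-function domination $|k|*g\le Mg$ cannot be applied to the inner function $s\mapsto[T(t,s)f_n(s)](x)$ (it depends on $t$), nor can the different kernels $k_n$ be pulled out of the $\ell^2$-sum by Minkowski.

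\textbf{Gap 2 (extrapolation in $q$).} You propose to apply Proposition~\ref{prop:pindRbdd} ``now in the $q$-variable''. That proposition is proved specifically for extrapolation in the time exponent: its proof reformulates the $\Rr$-bound as a weighted scalar estimate for the functions $F_{p_0}(t)=\|\sum_n r_nI_nf_n(t)\|_{L^{p_0}(\Omega;X)}$ on $\R$ (for fixed $X$), then extrapolates the weight $v$ on $\R$. There is no analogous move in the $q$-variable: expressing the $\Rr$-bound on $L^p(\R,v;L^{q}(\mathcal{O},w))$ as a weighted scalar estimate on $\mathcal{O}$ requires commuting the $L^p(\R,v)$-, $L^q(\mathcal{O},w)$- and $L^r(\Omega)$-norms by Fubini, which is only legitimate when $p=q=r$. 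If one chooses $q_0=2$ to fix Gap 1 (extrapolating the hypothesis to $A_2$-weights first, so that $p=q=2$ makes the base case a Hilbert-space statement and hence trivial), then this flipping-of-mixed-norms obstruction becomes exactly the bottleneck for passing from $q=2$ to general $q$. Overcoming it requires something genuinely different from Theorem~\ref{thm:baseweight} as stated --- e.g.\ a lattice-valued form of Rubio de Francia extrapolation or a direct weight construction via the Rubio de Francia algorithm --- and this is precisely the nontrivial content of the cited result from \cite{GLV}.

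In summary: the toolkit you identify (Proposition~\ref{prop:pindRbdd}, $\ell^2$-upgrade of \eqref{eq:weightedcond}, weighted maximal inequalities) is the right one, but as written the argument does not establish the pointwise square-function estimate at the base exponent, and the step extrapolating the inner exponent $q$ is not justified by anything in the paper.
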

The proof of this result is based on extrapolation techniques of Rubio de Francia.
As for fixed $t,s\in \R$, $T(t,s)$ on $L^q(\mathcal{O})$ is usually defined by a singular integral of convolution type in $\R^d$, one can often apply Calder\'on-Zygmund theory and multiplier theory to verify \eqref{eq:weightedcond}. In this case it is usually not more difficult to prove the boundedness for all $A_q$-weights, than just $w=1$. The reason for this is that for large classes of operators, boundedness implies weighted boundedness
(see \cite[Theorem IV.3.9]{GarciaRubio}, \cite[Theorem 9.4.6]{GrafakosModern} and \cite[Corollary 2.10]{HaHy}). Another situation where weights are used to obtain $\Rr$-boundedness can be found in \cite{HHH,Frohlich}.

\begin{example}\label{ex:boundedT}
For a bounded measurable function $\theta:\R^2\to \C$ let $T(t,s) f = \theta(t,s) f$, $f\in  L^{q_0}(\R^d,w)$. Then \eqref{eq:weightedcond} holds and hence Theorem \ref{thm:weightedR} implies that $\I\subseteq \calL(L^p(\R,v;L^q(\R^d,w)))$ is $\Rr$-bounded for all $p,q\in (1, \infty)$ and all $v\in A_p$ and $w\in A_q$.
\end{example}

\subsection{Sectorial operators and $H^\infty$-calculus\label{FuncCalc}}

Let $X$ be a Banach space. We briefly recall the definition of the $H^\infty$-calculus which was developed by McIntosh and collaborators (see e.g.\ \cite{ADM,AMcN,CDMcY96,McI}). We refer to \cite{Haase:2, KW} for an extensive treatment of the subject.
For $\theta\in (0,\pi)$ we set
\[\Sigma_\theta = \{z\in \C\setminus\{0\}: \ |\arg(z)| < \theta\},\]
where $\arg:\C\setminus\{0\}\to (-\pi,\pi]$. A closed densely defined linear operator
$(A, D(A))$ on $X$ is said to be {\em sectorial of type $\sigma\in (0,\pi)$} if it is injective and has dense range,
its spectrum is contained in $\overline{\Sigma_\sigma}$, and for all $\sigma'\in(\sigma,\pi)$ the set
$$ \big\{z(z+A)^{-1}: \ z\in \C\setminus\{0\}, \  |\arg(z)|> \sigma'\big\}$$
is uniformly bounded by some constant $C_A$. The infimum of all $\sigma\in (0,\pi)$ such that $A$ is sectorial of type
$\sigma$ is called the {\em sectoriality angle} of $A$. If $\sigma<\pi/2$, then by \cite[Proposition 2.1.1]{Lun}, $A$ generates an analytic strongly continuous semigroup $T(z) = e^{-zA}$ for $\arg(z)<\pi/2 - \sigma$ and
\begin{align}\label{eq:semigroupbound}
\|T(t)\| \leq C_A C_{\sigma},  \ \ t\geq 0.
\end{align}

Let $H^\infty(\Sigma_\theta)$ denote the Banach space of all bounded analytic functions
$f:\Sigma_\theta\to \C$, endowed with the supremum norm. Let
$H_0^\infty(\Sigma_\theta)$ denote the linear subspace of all
$f\in H^\infty(\Sigma_\theta)$ for which there exists $\varepsilon>0$ and $C\ge 0$ such that
\[|f(z)| \le \frac{C|z|^\varepsilon}{(1+|z|)^{2\varepsilon}}, \quad z\in \Sigma_\theta.\]
If $A$ is sectorial of type $\sigma_0\in (0,\pi)$, then for all $\sigma\in (\sigma_0,\pi)$ and
$f\in H_0^\infty(\Sigma_{\sigma})$ we define the bounded operator $f(A)$ by
\[ f(A) = \frac1{2\pi i}\int_{\partial \Sigma_{\sigma}} f(z) (z+A)^{-1}\,dz.\]

A sectorial operator $A$ of type $\sigma_0\in (0,\pi)$
is said to have a {\em  bounded $H^\infty(\Sigma_{\sigma})$-calculus}) for $\sigma\in (\sigma_0,\pi)$ if there exists a $C\ge 0$ such that
\[ \| f(A)\| \le C\|f\|_{H^\infty(\Sigma_\sigma)}, \ \ \ f\in H_0^\infty(\Sigma_\sigma).\]
If $A$ has a bounded $H^\infty(\Sigma_{\sigma})$-calculus, then the mapping $f\mapsto f(A)$
extends to a bounded algebra homomorphism from
$H^\infty(\Sigma_\sigma)$ to $\calL(X)$ of norm $\leq C$.

Many differential operators on $L^q$-spaces with $q\in (1, \infty)$ are known to have a bounded $H^\infty$-calculus (see \cite{DHP,KW} and the survey \cite{HinfWeis}).
The case $A = -\Delta$ on $L^p(\R^d,w)$ has a bounded $H^\infty$-calculus of arbitrary small angle $\sigma\in (0,\pi)$ for every $w\in A_p$ and $p\in (1, \infty)$. This easily follows from the weighted version of Mihlin's multiplier theorem (see \cite[Example 10.2]{KW} and \cite[Theorem IV.3.9]{GarciaRubio}).
For instance, it includes all sectorial operators $A$ of angle $<\pi/2$ for which $e^{-tA}$ is a positive contraction (see \cite{KWcalc}).

\section{A class of singular integrals with operator-valued kernel\label{sec:sing}}
Let $X$ be a Banach space. In this section we will study a class of singular integrals of the form
\begin{equation}\label{eq:singular}
I_K f(t) =  \int_{\R} K(t,s) f(s) \, ds, \ \ \ t\in \R,
\end{equation}
where $K:\{(t,s): t\neq s\}\to \calL(X)$ is an operator-valued kernel. If a kernel $L$ depends on one variable we write $I_L = I_K$ where $K(t,s) = L(t-s)$.

There is a natural generalization of the theory of singular integrals of {\em convolution type} to the vector-valued setting (see \cite{HyWe07}). In the case the singular integral is of {\em non-convolution type}, the situation is much more complicated. An extensive treatment can be found in \cite{Hyt:Tb, Hytonen:nonhomog, HytWeis:T1}, where $T1$-theorems \cite{DavJou84} and $Tb$-theorems \cite{DJS85} have been obtained in an infinite dimensional setting. Checking the conditions of these theorems can be hard. For instance, from \cite{TaoMei06} it follows that the typical BMO conditions one needs to check, have a different behavior in infinite dimensions. Our motivation comes from the application to maximal $L^p$-regularity of \eqref{eq:introCauchy}. At the moment we do not know whether the $T1$-theorem and $Tb$-theorem can be applied to study maximal $L^p$-regularity for the time dependent problems we consider. Below we study a special class of singular integrals with operator-valued kernel for which we prove $L^p$-boundedness. The assumptions on $K$ are formulated in such a way that they are suitable for proving maximal $L^p$-regularity of \eqref{eq:introCauchy} later on.

\subsection{Assumptions}

The assumptions in the main result of this section are as follows.

\let\ALTERWERTA\theenumi
\let\ALTERWERTB\labelenumi
\def\theenumi{(H1)}
\def\labelenumi{(H1)}
\begin{enumerate}
\item\label{as:HXpv} Let $X$ be a Banach space and let $p\in [1, \infty)$ and\footnote{For the case $p=1$, the convention will be that $v\equiv 1$.} $v\in A_p$.
\end{enumerate}
\let\theenumi\ALTERWERTA
\let\labelenumi\ALTERWERTB

\let\ALTERWERTA\theenumi
\let\ALTERWERTB\labelenumi
\def\theenumi{(H2)}
\def\labelenumi{(H2)}
\begin{enumerate}
\item\label{as:Kfact} The kernel $K$ factorizes as
\begin{equation}\label{eq:factorizeKernel}
K(t,s) = \frac{\phi_0(|t-s|A_0) T(t,s) \phi_1(|t-s|A_1)}{t-s},  \ \ \ (t,s)\in\R^2, t\neq s.
\end{equation}
Here $A_0$ and $A_1$ are sectorial operators on $X$ of angle $<\sigma_0$ and $<\sigma_1$ respectively, and $\phi_j\in H^\infty_0(\Sigma_{\sigma_j'})$ and $\sigma_j'\in (\sigma_j, \pi)$ for $j=0, 1$. Moreover, we assume $T:\{(t,s):t\neq s\}\to \calL(X)$ is uniformly bounded and for all $x\in X$, $\{(t,s):t\neq s\} \mapsto T(t,s)x$ is strongly measurable.
\end{enumerate}
\let\theenumi\ALTERWERTA
\let\labelenumi\ALTERWERTB

\let\ALTERWERTA\theenumi
\let\ALTERWERTB\labelenumi
\def\theenumi{(H3)}
\def\labelenumi{(H3)}
\begin{enumerate}
\item\label{as:Hinfty} Assume $X$ has finite cotype. Assume $A_j$ has a bounded $H^\infty(\Sigma_{\sigma_j})$-calculus with $\sigma_j\in [0,\pi)$ for $j=0,1$.
\end{enumerate}
\let\theenumi\ALTERWERTA
\let\labelenumi\ALTERWERTB

\let\ALTERWERTA\theenumi
\let\ALTERWERTB\labelenumi
\def\theenumi{(H4)}
\def\labelenumi{(H4)}
\begin{enumerate}
\item\label{as:Rbdd}
Assume the family of integral operators
$\I:=\{I_{k T}: k\in \calK\}\subseteq\calL(L^{p}(\R,v;X))$ is $\Rr$-bounded.
\end{enumerate}
\let\theenumi\ALTERWERTA
\let\labelenumi\ALTERWERTB

The class of kernels $\calK$ is as defined in Section \ref{sec:Rbdd}. Recall from \eqref{eq:singular} that
\begin{equation}\label{eq:IkTdef}
I_{k T} f(t) = \int_{\R} k(t-s) T(t,s) f(s)\, ds.
\end{equation}
Since $T$ is uniformly bounded, the operator $I_{kT}$ is bounded on $L^p(\R,v;X)$ by Proposition \ref{prop:uniformTts}.

\begin{remark}\
\begin{enumerate}
\item The class of Banach spaces with finite cotype is rather large. It contains all $L^p$-spaces, Sobolev, Besov and Hardy spaces as long as the integrability exponents are in the range $[1, \infty)$. The spaces $c_0$ and $L^\infty$ do not have finite cotype. The cotype of $X$ will be applied in order to have estimates for certain continuous square functions (see \eqref{eq:squarefH}). Details on type and cotype can be found in \cite{DJT}.
\item In the theory of singular integrals in a vector-valued setting one usually assumes $X$ is a UMD space. Note that every UMD has finite cotype and nontrivial type by the Maurey-Pisier theorem (see \cite{DJT}).
\item A sufficient condition for the $R$-boundedness condition in the case $X = L^q$ can be deduced from Theorem \ref{thm:weightedR}.
\item In \ref{as:Kfact}, $\phi_j(|t-s|A_j)$ could be replaced by $\phi_j((t-s)A_j)$ if the $A_j$'s are bisectorial operators. On the other hand, one can also consider $T(t,s) \one_{\{s<t\}}$ and $T(t,s) \one_{\{t<s\}}$ separately. Indeed, the hypothesis \ref{as:HXpv}--\ref{as:Rbdd} holds for these operators as well whenever they hold for $T(t,s)$.
\end{enumerate}
\end{remark}

\begin{example}
Typical examples of functions $\phi_j$ which one can take are $\phi_j(z) = z^{\alpha} e^{-z}$ for $j=0, 1$. If $T(t,s) = I \one_{\{s<t\}}$, then for $A = A_0 = A_1$ one would have
\[K(t,s) = (t-s)^{2\alpha-1} A^{2\alpha} e^{-2(t-s)A} \one_{\{s<t\}}.\]
This kernel satisfies $\|K(t,s)\|\sim (t-s)^{-1}$ for $t$ close to $s$. If one takes $T(t,s)$ varying in $t$ and $s$ one might view it as a multiplicative perturbation of the above kernel.
\end{example}

The following simple observation shows that $I_K$ as given in \eqref{eq:singular} can be defined on $L^p(\R;D(A_1)\cap R(A_1))$, where $D(A_1)$ denotes the domain of $A_1$ and $R(A_1)$ the range of $A_1$.
\begin{lemma}\label{lem:defondense}
Under the assumptions \ref{as:HXpv} and \ref{as:Kfact}, $I_K$ is bounded as an operator from $L^p(\R,v;D(A_1)\cap R(A_1))$ into $L^p(\R,v;X)$.
\end{lemma}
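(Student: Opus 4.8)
The plan is to show that for $f \in L^p(\R,v;D(A_1)\cap R(A_1))$ the integral defining $I_K f(t)$ converges absolutely for a.e.\ $t$ and defines an element of $L^p(\R,v;X)$, by splitting the kernel into its behaviour near the diagonal ($|t-s|\le 1$, say) and away from it. For the near-diagonal part one exploits the factor $\phi_1(|t-s|A_1)$: since $\phi_1 \in H^\infty_0(\Sigma_{\sigma_1'})$, there is $\varepsilon>0$ with $|\phi_1(z)| \lesssim |z|^\varepsilon$ for small $|z|$, and for $x\in D(A_1)$ one has the standard bound $\|\phi_1(uA_1)x\| \lesssim u^{\varepsilon'}\|x\|_{D(A_1)}$ for $u$ small (with $0<\varepsilon'\le\min(\varepsilon,1)$), using that $\phi_1(uA_1) = (uA_1)(1+uA_1)^{-1}\psi(uA_1)$ for a suitable $\psi\in H^\infty_0$ together with the functional calculus bound; here $R(A_1)$ is used only to make the $H^\infty_0$-calculus act on the given vector, or one may simply note $D(A_1)\cap R(A_1)$ is the natural domain on which all these expressions are unambiguously defined. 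Combined with the uniform bound on $T(t,s)$ and $\|\phi_0(|t-s|A_0)\|\lesssim 1$, this gives $\|K(t,s)x\| \lesssim |t-s|^{\varepsilon'-1}\|x\|_{D(A_1)}$, which is locally integrable in $s$.

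For the off-diagonal part $|t-s|\ge 1$, one instead uses the decay of $\phi_1$ at infinity: $|\phi_1(z)|\lesssim |z|^{-\varepsilon}$ for large $|z|$, so $\|\phi_1(uA_1)x\|\lesssim u^{-\varepsilon}\|x\|$ as $u\to\infty$ (now using $R(A_1)$, i.e.\ $\phi_1(uA_1)x = \psi(uA_1)\,u^{-1}A_1^{-1}(\cdots)$-type identities valid on the range), and likewise $\|\phi_0\|$ stays bounded, so $\|K(t,s)x\| \lesssim |t-s|^{-1-\varepsilon}\|x\|_{D(A_1)\cap R(A_1)}$, again integrable. Adding the two regimes, $s\mapsto \|K(t,s)f(s)\|$ is bounded by $(g * \|f(\cdot)\|_{D(A_1)\cap R(A_1)})(t)$ with $g(r) = \min(|r|^{\varepsilon'-1}, |r|^{-1-\varepsilon})\one_{\{r\ne 0\}} \in L^1(\R)$. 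Hence $\|I_K f(t)\|_X \le (g*\|f\|_{D(A_1)\cap R(A_1)})(t)$ pointwise, and since $g\in L^1(\R)$ and $v\in A_p$ the convolution operator is bounded on $L^p(\R,v)$ (it is dominated by a constant times the Hardy--Littlewood maximal operator, or one invokes the weighted Young-type inequality for $A_p$-weights), giving $\|I_K f\|_{L^p(\R,v;X)} \le C\|f\|_{L^p(\R,v;D(A_1)\cap R(A_1))}$.

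The main obstacle I expect is making the elementary decay estimates $\|\phi_1(uA_1)x\| \lesssim u^{\varepsilon'}\|x\|_{D(A_1)}$ near $u=0$ and $\lesssim u^{-\varepsilon}\|x\|_{R(A_1)}$ near $u=\infty$ completely rigorous: one must be careful that $\phi_1$ is only in $H^\infty_0$ (not decaying like a fixed power everywhere), so the correct statement is that $\phi_1(z)/z$ and $z\phi_1(z)$ each extend to bounded analytic functions on a slightly smaller sector after multiplication by appropriate $(1+z)$-factors, and then the $H^\infty$-calculus bound from Section \ref{FuncCalc} applies to $uA_1$ uniformly in $u$. This is where the hypotheses that $A_0, A_1$ are sectorial with $\phi_j \in H^\infty_0(\Sigma_{\sigma_j'})$, $\sigma_j' > \sigma_j$, are exactly what is needed; note (H3), i.e.\ the \emph{bounded} $H^\infty$-calculus, is \emph{not} required here — only the elementary $H^\infty_0$-calculus and sectoriality are used, which is why the lemma is stated under \ref{as:HXpv} and \ref{as:Kfact} alone. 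Everything else (Fubini to justify the a.e.\ convergence, measurability of $t\mapsto I_K f(t)$ from strong measurability of $T$) is routine.
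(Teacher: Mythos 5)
Your proposal is correct and follows essentially the same route as the paper. The paper proves the single estimate $\|\phi_1(tA_1)x\| \leq C\min\{t^{\varepsilon},t^{-\varepsilon}\}(\|x\|+\|A_1 x\|+\|A_1^{-1}x\|)$ (from $|\phi_1(z)|\leq C|z|^{\varepsilon}(1+|z|)^{-2\varepsilon}$ via the elementary $H^\infty_0$-resolvent integral), which is exactly the unified form of your near- and far-diagonal bounds; combined with uniform bounds on $\phi_0(|t-s|A_0)$ and $T(t,s)$, this gives a kernel dominated by the radially-decreasing integrable $\min\{|r|^{\varepsilon-1},|r|^{-1-\varepsilon}\}$, and the paper then cites the same maximal-function domination (Grafakos 2.1.10) for $p>1$ and Young's inequality for $p=1$ that you invoke. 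Your observation that (H3) is not used here is also correct.
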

\begin{proof}
As $\phi_1\in H^\infty_0(\Sigma_{\sigma_1'})$ we can find a constant $C$ and $\varepsilon\in (0,1)$ such that $|\phi_1(z)|\leq C |z|^{\varepsilon} |1+z|^{-2\varepsilon}$.
One can check that for all $x\in D(A_1)\cap R(A_1)$,
\begin{equation}\label{eq:estpsi}
\|\phi_1(tA_1) x\|\leq C \min\{t^{\varepsilon}, t^{-\varepsilon}\} (\|x\|+ \|A_1x\| + \|A_1^{-1}x\|), \  \ t>0.
\end{equation}
Now since  $\phi_0\in H^\infty_0(\Sigma_{\sigma_0'})$ and $\|T(t,s)\|$ is uniformly bounded we obtain
\begin{align*}
  |t-s|\, \|K(t,s)x\| &\leq \|\phi_0(|t-s|A_0)\| \, \|T(t,s)\| \, \|\phi_1(|t-s|A_1) x\|
  \\ & \leq C \min\{|t-s|^{\varepsilon}, |t-s|^{-\varepsilon}\} (\|x\|+ \|A_1x\| + \|A_1^{-1}x\|).
\end{align*}
Therefore, $K:\{(t,s):t\neq s\}\to \calL(R(A_1)\cap D(A_1), X)$ is essentially nonsingular, and the assertion of the lemma easily follows from \cite[Theorem 2.1.10]{GrafakosClassical} and the boundedness of the Hardy-Littlewood maximal operator for $p\in (1, \infty)$. The case $p=1$ follows from Young's inequality.
\end{proof}

\subsection{Main result on singular integrals}

\begin{theorem}\label{thm:singular}
Assume \ref{as:HXpv}-\ref{as:Rbdd}. Then $I_K$ defined by \eqref{eq:singular} extends to a bounded operator on $L^p(\R,v;X)$.
\end{theorem}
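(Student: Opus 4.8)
The plan is to reduce the operator $I_K$ to a form where the $\Rr$-boundedness hypothesis \ref{as:Rbdd} can be applied, using the $H^\infty$-calculus of $A_0$ and $A_1$ via square-function estimates. First I would use Lemma \ref{lem:defondense} together with the density of $D(A_1)\cap R(A_1)$ in $X$ (valid since $A_1$ is sectorial with dense domain and dense range, and one can extract a dense subspace on which everything is well-defined) to reduce the problem to proving the a priori estimate $\|I_K f\|_{L^p(\R,v;X)}\lesssim \|f\|_{L^p(\R,v;X)}$ for $f$ in that dense class. The key algebraic observation is that $\phi_0(z)$ and $\phi_1(z)$ can each be written (up to normalization) as $\psi_0(z)^2$-type products, or more precisely one factors $\phi_j = \eta_j \cdot \widetilde\phi_j$ where $\eta_j,\widetilde\phi_j\in H^\infty_0$; combined with the resolution of the identity $\int_0^\infty \psi(rA)\,\frac{dr}{r}=c_\psi I$ on $R(A)$, one rewrites $K(t,s)$ as a superposition over dilation parameters of kernels each having the scalar factor $\frac{1}{t-s}k_r(t-s)$ with $k_r\in\calK$ (after rescaling), times the bounded operator $T(t,s)$ sandwiched between functional-calculus factors of $A_0$ and $A_1$.

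The main steps, in order, are: (1) write $I_K f(t) = \int_\R \frac{1}{t-s}\phi_0(|t-s|A_0)T(t,s)\phi_1(|t-s|A_1)f(s)\,ds$ and insert the Calderón reproducing formula in both $A_0$ and $A_1$; (2) after a change of variables this expresses $I_K$ as an average (an integral in auxiliary dilation variables $r_0,r_1$) of operators of the form $g\mapsto \int_\R k_{r_0,r_1}(t-s) T(t,s) g(s)\,ds$ composed with the fixed functional-calculus operators $\psi_j(r_j A_j)$ acting pointwise in $t$; (3) bound the $L^p(\R,v;X)$-norm by passing the average through, using the continuous square-function estimates guaranteed by the bounded $H^\infty$-calculus of $A_0$, $A_1$ and the finite cotype of $X$ (this is where \eqref{eq:squarefH} and hypothesis \ref{as:Hinfty} enter: finite cotype gives one of the two inequalities needed to compare $\|x\|$ with $\|(\int_0^\infty \|\psi(rA)x\|^2\frac{dr}{r})^{1/2}\|$); (4) the inner integral operators fall within the class $\I$ of \ref{as:Rbdd} once one checks $k_{r_0,r_1}\in\calK$ uniformly — using Example \ref{ex:kernel} with $h$ built from the decay $|\phi_j(z)|\le C|z|^\varepsilon|1+z|^{-2\varepsilon}$ — so their contribution is controlled by $\Rr(\I)$ applied to a Rademacher sum obtained by discretizing the $r_0,r_1$-integrals; (5) reassemble and use the reverse square-function estimate (which holds on $R(A_j)$ for operators with bounded $H^\infty$-calculus, the type side being automatic here or supplied by cotype of $X^*$ via duality) to get back to $\|f\|_{L^p(\R,v;X)}$. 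For $p=1$ the $\Rr$-boundedness is replaced by the uniform boundedness on $L^1$ from Proposition \ref{prop:uniformTts} and the argument is simpler.

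The hard part will be step (3)--(4): making the interchange of the dilation averages with the $L^p(\R,v;X)$-norm rigorous and organizing the double functional calculus so that what remains is genuinely an operator of the form $I_{kT}$ with $k\in\calK$, while keeping all constants uniform in the dilation parameters. The subtlety is that $T(t,s)$ sits \emph{between} the two functional-calculus factors, so one cannot simply commute; instead one must peel off $\phi_1(|t-s|A_1)$ by the reproducing formula in $A_1$ to expose $\psi_1(r_1 A_1)f(s)$ as a new input function (with its own square function controlled by $H^\infty$-calculus of $A_1$), then handle $\phi_0(|t-s|A_0)$ acting on the output, and only the \emph{scalar} kernel together with $T(t,s)$ is fed into $\I$. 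Verifying the uniform membership $k_{r_0,r_1}(\cdot)\in\calK$ after the rescalings, and that the Rademacher-sum discretization of the continuous averages converges with $\Rr$-bounds independent of the discretization, is the principal technical obstacle; the weight $v\in A_p$ is harmless throughout since all the scalar ingredients (maximal function domination, square-function estimates) are available in the $A_p$-weighted setting.
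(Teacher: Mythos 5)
Your overall target is the right one: write $K(t,s)$ as a superposition, over an auxiliary dilation parameter, of operators of the form (scalar kernel in $\calK$)$\,\times\,T(t,s)$ sandwiched between functional-calculus factors of $A_0$ and $A_1$ that no longer depend on $t-s$, then dualize and use the Kalton--Weis square-function estimates together with the $\gamma$-multiplier theorem and the $\Rr$-bound from \ref{as:Rbdd}. Steps (3)--(5) of your outline, including the role of finite cotype and the weighted maximal-function bounds, match the paper's Step~3 in substance.

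The gap is in steps (1)--(2), and it is a real one. The Calder\'on reproducing formula $\int_0^\infty \psi(rA)\,\frac{dr}{r}=c\,I$ does \emph{not} decouple the $|t-s|$-dependence from the operator factors. Inserting it (on either side, or with a $\psi^2$-factorization, or after the change of variables $r\mapsto r/|t-s|$) always leaves you with an operator of the form $\phi_j(|t-s|A_j)\psi(rA_j)=\bigl(\phi_j(|t-s|\cdot)\psi(r\cdot)\bigr)(A_j)$ in which $|t-s|$ is still \emph{inside} the functional calculus. There is no factorization of this operator as a scalar $k_r(t-s)$ times a fixed $\psi(rA_j)$: that would force $\phi_j(|t-s|\lambda)$ to be independent of $\lambda$, which is false. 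Consequently the inner integral cannot be recognized as an operator $I_{kT}$ with $k\in\calK$, and \ref{as:Rbdd} cannot be invoked. You could try to manufacture the required Mellin-convolution representation $\phi_j(z\lambda)=\int_0^\infty k^{(j)}(u,z)\,\psi_j(u\lambda)\,\frac{du}{u}$ by hand, but that is a nontrivial Paley--Wiener/Mellin-inversion argument and you would still have to arrange it coherently on the whole sector (not just a ray) and for both $\phi_0$ and $\phi_1$ simultaneously.

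The paper's solution is to use the \emph{Poisson representation formula} for bounded analytic functions on a sector (Lemma \ref{lemma4.1gen}, taken from \cite{JMW}), applied in one stroke to the full operator-valued analytic function $z\mapsto \phi_0(zA_0)\,T(t,s)\,\phi_1(zA_1)$. Evaluating at $z=|t-s|>0$ and reading off the boundary values at $z=ue^{\pm i\alpha}$ puts the entire $|t-s|$-dependence into the explicit scalar Poisson kernel $k_\alpha(u,|t-s|)$, while the operator part $\Phi_{0,j}(u)\,T(t,s)\,\Phi_{1,j}(u)$ has the functional calculus at the fixed points $ue^{\pm i\alpha}$. One then checks via Example \ref{ex:kernel} that the rescaled Poisson kernels $\tilde k_\alpha(u,\cdot)$ lie in $\calK$ with a uniform bound, which is exactly what \ref{as:Rbdd} needs. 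This is the missing ingredient in your plan; the reproducing formula is the wrong tool for this decoupling.

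Two smaller points. First, the paper works with a single parameter $u$ and a two-term sum over boundary rays $j\in\{-1,1\}$, not a double integral in $(r_0,r_1)$; the Poisson formula handles $\phi_0$ and $\phi_1$ simultaneously. Second, your suggestion that for $p=1$ one can replace \ref{as:Rbdd} by the uniform $L^1$-boundedness of Proposition \ref{prop:uniformTts} is not what the argument requires: the $\gamma$-multiplier step genuinely needs the $\Rr$-bound on $\{I_{S_u}:u>0\}$, which is taken as a hypothesis for all $p\in[1,\infty)$; uniform boundedness does not imply it, and the paper does not make a separate and simpler case for $p=1$. Also, there is no need to discretize the $u$-integral into a Rademacher sum: the continuous $\gamma$-space framework (together with the fact that $X$ does not contain $c_0$, which follows from finite cotype) handles the continuous average directly.
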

The proof is inspired by the recent solution to the stochastic maximal $L^p$-regularity problem given in \cite{JMW}.

Before we turn to the proof, we have some preliminary results and remarks.
\begin{example}
Assume \ref{as:Kfact} and \ref{as:Hinfty}. If $T(t,s)$ is as in Example \ref{ex:boundedT} then \ref{as:Rbdd} holds. Therefore, $I_K$ is bounded by Theorem \ref{thm:singular}.  Surprisingly, we do not need any smoothness of the mapping $(t,s)\mapsto K(t,s)$ in this result. In particular we do not need any regularity conditions for $K(t,s)$ (such as H\"ormander's condition) in $(t,s)$.
\end{example}

Recall the following Poisson representation formula (see \cite[Lemma 4.1]{JMW}).
\begin{lemma}\label{lemma4.1gen}
Let $\alpha\in(0,\pi)$ and $\alpha'\in(\alpha,\pi]$ be given, let $E$ be a Banach space and let $f:\Sigma_{\alpha'}\rightarrow E$ be a bounded analytic function. Then, for all $s>0$ we have
\begin{equation}
f(s)=\sum_{j\in\{-1,1\}}\frac{j}{2}\int_{0}^{\infty}k_{\alpha}(u,s)f(ue^{ij\alpha})du,
\nonumber
\end{equation}
where $k_{\alpha}:\R_{+}\times\R_{+}\rightarrow\R$ is given by
\begin{equation}\label{eq4.1gen}
k_{\alpha}(u,t)=\frac{(t/u)^{\frac{\pi}{2\alpha}}}{(t/u)^{\frac{\pi}{\alpha}}+1}\frac{1}{\alpha u}.
\end{equation}
\end{lemma}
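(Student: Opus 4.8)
The plan is to reduce to scalar-valued $f$ by duality and then to transport the classical Poisson integral formula for the half-plane through the conformal map $z\mapsto z^{\pi/(2\alpha)}$.

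First I would record two routine facts. For fixed $s>0$ the kernel $u\mapsto k_{\alpha}(u,s)$ lies in $L^{1}(0,\infty)$: it is comparable to $s^{-\pi/(2\alpha)}u^{\pi/(2\alpha)-1}/\alpha$ as $u\downarrow 0$ and to $s^{\pi/(2\alpha)}u^{-\pi/(2\alpha)-1}/\alpha$ as $u\to\infty$, both integrable since $\pi/(2\alpha)>0$. Together with the boundedness of $f$ and the continuity of $u\mapsto f(ue^{\pm i\alpha})$ on $(0,\infty)$, this shows the right-hand side is a sum of absolutely convergent Bochner integrals; hence, by the Hahn--Banach theorem, it suffices to verify the identity after pairing with an arbitrary $x^{*}\in E^{*}$, so that we may assume $E=\C$.

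Next I would introduce $\Phi(z)=z^{\pi/(2\alpha)}$ (principal branch), a conformal map of $\Sigma_{\alpha}$ onto the right half-plane $\C_{+}=\{\Re w>0\}$ with inverse $\Phi^{-1}(w)=w^{2\alpha/\pi}$. Because $\alpha'>\alpha$, the image of $\Sigma_{\alpha'}$ is the strictly larger sector $\Sigma_{\alpha'\pi/(2\alpha)}$ of opening $>\pi/2$, so $g:=f\circ\Phi^{-1}$ is bounded and analytic on an open set containing $\overline{\C_{+}}\setminus\{0\}$, hence in particular continuous up to $i\R\setminus\{0\}$ and bounded near the origin. For such a $g$ I would invoke the Poisson representation
\[ g(x)=\frac{1}{\pi}\int_{-\infty}^{\infty}\frac{x}{x^{2}+y^{2}}\,g(iy)\,dy,\qquad x>0, \]
proved by a standard contour argument: apply Cauchy's theorem to $w\mapsto g(w)\bigl(\tfrac{1}{w-x}-\tfrac{1}{w+x}\bigr)$ over a truncated half-disc $\{\varepsilon<|w|<R\}\cap\C_{+}$ and let $\varepsilon\downarrow0$ and $R\to\infty$; the boundedness of $g$ makes the small arc vanish and, combined with the $O(R^{-2})$ decay of the reflected kernel, also the large arc. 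Finally I would put $x=\Phi(s)=s^{\pi/(2\alpha)}$, split the Poisson integral into $y>0$ and $y<0$, and substitute $y=u^{\pi/(2\alpha)}$, respectively $y=-u^{\pi/(2\alpha)}$; using $\Phi^{-1}(iy)=ue^{i\alpha}$ and $\Phi^{-1}(iy)=ue^{-i\alpha}$ in the two cases and $dy=\tfrac{\pi}{2\alpha}u^{\pi/(2\alpha)-1}\,du$, a direct computation collapses the Poisson kernel on each ray to $\tfrac12 k_{\alpha}(u,s)$ and yields the asserted representation of $f(s)$.

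I expect the only genuinely delicate point to be the justification of the half-plane Poisson formula: one needs $g$ to be continuous up to $i\R$ rather than merely bounded and analytic on the open half-plane for the contour argument to be legitimate, and this is precisely what the hypothesis $\alpha'>\alpha$ supplies, since it forces $g$ to extend as a bounded analytic function to a neighbourhood of $\overline{\C_{+}}\setminus\{0\}$. The reduction to $\C$ and the change of variables are otherwise mechanical.
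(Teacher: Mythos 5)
Your approach is sound and in fact supplies a proof where the paper gives none: the text only cites \cite{JMW} for this lemma. The conformal transfer $\Phi(z)=z^{\pi/(2\alpha)}$ to the right half-plane $\{\Re w>0\}$, the justification of the half-plane Poisson formula via Cauchy's theorem applied to $g(w)\bigl(\tfrac{1}{w-x}-\tfrac{1}{w+x}\bigr)$ on a truncated half-disc, and the change of variables $y=\pm u^{\pi/(2\alpha)}$ are all correct, and you have located the role of $\alpha'>\alpha$ accurately: it makes $g=f\circ\Phi^{-1}$ analytic on an open set containing $\{\Re w\geq 0\}\setminus\{0\}$, which is what legitimizes the contour argument and the passage to boundary values.

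There is, however, one point you must address: a sign discrepancy with the lemma as printed. Carrying out the substitution on the two half-lines $y\gtrless0$ exactly as you describe gives
\[
f(s)=\tfrac12\int_0^\infty k_\alpha(u,s)\,f(ue^{i\alpha})\,du+\tfrac12\int_0^\infty k_\alpha(u,s)\,f(ue^{-i\alpha})\,du,
\]
with coefficient $\tfrac12$ for both $j=\pm1$, as it must be since the Poisson kernel $\tfrac{x}{x^2+y^2}$ is even in $y$. The lemma, on the other hand, carries the coefficient $\tfrac{j}{2}$, which puts a minus sign on the $j=-1$ term. That version cannot be correct as stated: for $f\equiv1$ the right-hand side would vanish, whereas a direct computation gives $\int_0^\infty k_\alpha(u,s)\,du=1$. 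The $j$ in the coefficient is evidently a misprint for $1$; it is harmless for the remainder of the paper because only the moduli of the two summands enter the estimates in the proof of Theorem~\ref{thm:singular}. Still, your writeup claims that the computation ``yields the asserted representation,'' which is not literally true; you should flag the misprint and state the corrected formula that your argument actually proves.
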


\begin{remark}
In the special case $X = L^q(S)$ with $q\in (1, \infty)$, we present some identification of spaces which can be used to simplify the proof below. This might be of use to readers who are only interested in $L^q$-spaces.
First of all one can use the usual adjoint $^*$ instead of the moon adjoint $\#$ in the proof below.
In this case one can take
\begin{align*}
\gamma(\tfrac{du}{u};X) &= L^q(S;L^2(0,\infty;\tfrac{du}{u})),
\\ \gamma(\tfrac{du}{u};X)^* &= L^{q'}(S;L^2(0,\infty;\tfrac{du}{u})),
\\ \gamma(\tfrac{du}{u};L^p(\R,v;X)) &= L^p(\R,v;L^q(S;L^2(0,\infty;\tfrac{du}{u}))).
\end{align*}
The $\gamma$-multiplier theorem which is applied below in \eqref{eq:gammamultiplier} can be replaced by \cite[4a]{Weis01} in this case. Finally, the estimates in \eqref{eq:squarefH} can be found in \cite{LeM04} in this special case.
\end{remark}

\begin{proof}[Proof of Theorem \ref{thm:singular}]
{\em Step 1}: By density it suffices to prove $\|I_K f\|_{L^p(\R,v;X)}\leq C \|f\|_{L^p(\R,v;X)}$ with $C$ independent of $f\in L^p(\R,v;D(A_1)\cap R(A_1))$. Note that by Lemma \ref{lem:defondense}, $I_K$ is well defined on this subspace.

\medskip

{\em Step 2}: Fix $0<\alpha<\alpha'\leq\min\{\sigma_{0}'-\sigma_{0},\sigma_{1}'-\sigma_{1}\}$.
First, since $z\rightarrow \phi_0(zA_0) T(t,s) \phi_1(zA_1)$ is analytic and bounded on $\Sigma_{\alpha'}$, by Lemma \ref{lemma4.1gen}, for $x\in D(A_1)\cap R(A_1)$ and $z>0$,
\begin{align*}
\phi_0(z A_0) & T(t,s) \phi_1(zA_1)x =\sum_{j\in\{-1,1\}}\frac{j}{2}\int_{0}^{\infty} \Phi_{0,j}(u)k_{\alpha}(u,z) T(t,s) \Phi_{1,j}(u) x\, du
\end{align*}
with $k_{\alpha}(u,t)$ as in \eqref{eq4.1gen} and $\Phi_{k,j}(u) = \phi_{k}(ue^{i j \alpha}A_k)$ for $j\in \{-1, 1\}$ and $k\in \{0,1\}$. Together with \ref{as:Kfact} this yields the following representation of $K(t,s)x$  for $x\in D(A_1)\cap R(A_1)$:
\[K(t,s)x = \sum_{j\in\{-1,1\}}\frac{j}{2}\int_{0}^{\infty} \Phi_{0,j}(u) S_u(t,s) \Phi_{1,j}(u)x\, \frac{du}{u},\]
where $S_u(t,s):= \tilde{k}_{\alpha}(u,t-s) T(t,s)$ with $\tilde{k}_{\alpha}(u,t):= k_{\alpha}(u,|t|) \tfrac{u}{t}$ and $k_{\alpha}$ is defined as in \eqref{eq4.1gen}. Moreover, the kernels $\tilde{k}_{\alpha}(u,\cdot)$
satisfy
\[|\tilde{k}_{\alpha}(u,t)| \leq \alpha^{-1} h_{\alpha}(\tfrac{t}{u})u^{-1},  \ \ \ u,t>0,\]
where $h_{\alpha}(x)=\frac{x^{\beta-1}}{x^{2\beta}+1}$ and $\beta:=\frac{\pi}{2\alpha}>0$.
Extending $k_{\alpha}(u,t)$ as zero for $t<0$, by Example \ref{ex:kernel} we find that $\tilde{k}_{\alpha}(u,\cdot)\in  \calK$. Indeed, substituting $y = x^{\beta}$, we obtain
\[\|h\|_{L^1(0,\infty)} = \int_0^\infty \frac{x^{\beta-1}}{x^{2\beta}+1} \, dx = \frac{1}{\beta}\int_0^\infty \frac{1}{y^{2}+1} \, dy = \alpha.\]
Therefore, the following representation holds for the singular integral
\begin{align*}
I_K f = \sum_{j\in\{-1,1\}}\frac{j}{2}\int_{0}^{\infty} \Phi_{0,j}(u) I_{S_u} [\Phi_{1,j}(u) f]\, \frac{du}{u},
\end{align*}
where $f\in L^p(\R,v;D(A_1)\cap R(A_1))$.

\medskip

{\em  Step 3}:
Let $Y_1 = L^p(\R,v;X)$ and $Y_2 = L^{p'}(\R,v';X^\#)$, where $X^\# = \overline{D(A^*_0)} \cap \overline{R(A^*_0)}$ is the moondual of $X$ with respect to $A_0$ (see \cite[Appendix A]{KW}) and $v' = v^{-\frac{1}{p-1}}$. For $g\in Y_2$ write $\lb f, g\rb_{Y_1, Y_2} = \int_{\R} \lb f(t), g(t)\rb \, dt$. In this way $Y_2$ can be identified with an isometric closed subspace of $Y_1^*$. Note that by \cite[Proposition 15.4]{KW}, $X^\#$ is norming for $X$ and hence Lemma \ref{lem:norming} implies that $Y_2$ is norming for $Y_1$.
For fixed $g\in Y_2$ it follows from Fubini's theorem and $\gamma$-duality (see \cite[Sections 2.3 and 2.6]{HaHa} and \cite[Section 5]{KalW04}),
\begin{align*}
|\lb  I_K f, g\rb_{Y_1,Y_2}|& \leq  \sum_{j\in\{-1,1\}}\frac{1}{2} \Big|\int_{\R}\int_{0}^{\infty} \lb \Phi_{0,j}(u) I_{S_u}[\Phi_{1,j}(u) f](t), g(t)\rb \, \frac{du}{u}   \, dt\Big|
\\ &= \sum_{j\in\{-1,1\}}\frac{1}{2} \Big|\int_{0}^{\infty} \lb I_{S_u}[\Phi_{1,j}(u) f], \Phi_{0,j}(u)^\# g\rb \, \frac{du}{u}   \Big|
\\ & = \sum_{j\in\{-1,1\}}\frac{1}{2} \|I_{S_u}[\Phi_{1,j}(u) f]\|_{\gamma(\R_+,\frac{du}{u};Y_1)} \|\Phi_{0,j}(u)^\# g\|_{\gamma(\R_+,\frac{du}{u};Y_1)^*}.
\end{align*}
Here $\Phi_{0,j}(u)^\#:=\phi_0(ue^{ij\alpha}A^\#_0)$.
By \ref{as:Rbdd} the family $\{I_{S_u}:u>0\}$ is $\Rr$-bounded by some constant $C_T$. Therefore, by the Kalton-Weis $\gamma$-multiplier theorem (see \cite[Proposition 4.11]{KalW04} and \cite[Theorem 5.2]{NCMA})
\begin{equation}\label{eq:gammamultiplier}
\|I_{S_u}[\Phi_{1,j}(u) f]\|_{\gamma(\R_+,\frac{du}{u};Y_1)}\leq C_T\|\Phi_{1,j}(u) f\|_{\gamma(\R_+,\frac{du}{u};Y_1)}.
\end{equation}
Here we used that $X$ does not contain an isomorphic copy of $c_0$ as it has finite cotype (see \ref{as:Hinfty}).
The remaining two square function norms can be estimated by the square function estimates of Kalton and Weis. Indeed, by \ref{as:Hinfty} and \cite[Theorem 4.11]{HaHa} or \cite[Section 7]{KalW04} (here we again use the finite cotype of $X$)  and the $\gamma$-Fubini property (see \cite[Theorem 13.6]{NCMA}), we obtain
\begin{equation}\label{eq:squarefH}
\begin{aligned}
\|\Phi_{1,j}(u) f\|_{\gamma(\R_+,\frac{du}{u};Y_1)} & \eqsim \|\Phi_{1,j}(u) f\|_{L^p(\R,v;\gamma(\R_+,\frac{du}{u};X))}  \leq C_{A_1} \|f\|_{Y_1},
\\ \|\Phi_{0,j}(u)^\# g\|_{\gamma(\R_+,\frac{du}{u};Y_1)^*} & \eqsim \|\Phi_{0,j}(u)^\# g\|_{L^{p'}(\R,v'; \gamma(\R_+,\frac{du}{u};X)^*)} \leq C_{A_0} \|g\|_{Y_2}.
\end{aligned}
\end{equation}
Combining all the estimates yields
\[|\lb  I_K f, g\rb_{Y_1,Y_2}| \leq C_T C_{A_0} C_{A_1} \|f\|_{Y_1} \|g\|_{Y_2}.\]
Taking the supremum over all $g\in L^{p'}(\R,v';X^\#)$ with $\|g\|_{Y_2}\leq 1$ we find $\|I_K f\|_{Y_1}\leq C_T C_{A_0} C_{A_1} \|f\|_{Y_1}$. This proves the $L^p$-boundedness.
\end{proof}

\begin{remark}\label{rem:hormander}
One can also apply standard extrapolation techniques to obtain weighted boundedness results for singular integrals from the unweighted case (see \cite{chill2014singular, HaHy}). However, for this one needs H\"ormander conditions on the kernel. As our proof gives a result in the more general setting, we can avoid smoothness assumptions on the kernel.
\end{remark}

\section{Maximal $L^p$-regularity\label{sec:maxLp}}

In this section we will apply Theorem \ref{thm:singular} to obtain maximal $L^p$-regularity for the following evolution equation on a Banach space $X_0$.
\begin{equation}\label{eq:Cauchy}
\begin{aligned}
u'(t)+A(t)u(t)& =f(t),\ t\in (0,T)\\
u(0)& =x.
\end{aligned}
\end{equation}
As explained in the introduction no abstract $L^p$-theory is available for \eqref{eq:Cauchy} outside the case where $t\mapsto A(t)$ is continuous.

The following assumption will be made throughout this whole section.
\let\ALTERWERTA\theenumi
\let\ALTERWERTB\labelenumi
\def\theenumi{(A)}
\def\labelenumi{\textbf{(A)}}
\begin{enumerate}
\item\label{as:operatorA} Let $X_0$ be a Banach space and assume the Banach space $X_1$ embeds densely and continuously in $X_0$. Let $p\in [1,\infty)$ and $v\in A_p$ with the convention that $v\equiv 1$ if $p=1$. Let $A:\R\rightarrow \calL(X_{1},X_{0})$ be such that for all $x\in X_1$, $t\mapsto A(t) x$ is strongly measurable, and there is a constant $C>0$ such that
    \[C^{-1} \|x\|_{X_{1}} \leq \|x\|_{X_{0}}+\|A(t)x\|_{X_{0}}\leq C\|x\|_{X_1}.\]
\end{enumerate}
\let\theenumi\ALTERWERTA
\let\labelenumi\ALTERWERTB

The above implies that each $A(t)$ is a closed operator on $X_0$ with $D(A(t)) = X_1$.
Note that whenever $A$ is given on an interval $I\subseteq \R$, we may always extend it constantly or periodically to all of $\R$.

Before we state the main result we will present some preliminary results on evolution equations with time-dependent $A$.

\subsection{Preliminaries on evolution equations}
Evolution equations and evolution families are extensively studied in the literature (see \cite{AT2, EN, Lun, Pazy, Schn, Ta1, Ta2, Ya}). We explain some parts which are different in our set-up.

For a strongly measurable function $f:(a,b)\to X_{0}$ we consider:
\begin{equation}\label{eqprobabstinvalprel}
\begin{cases}
u'(t)+A(t)u(t)=f(t),\ t\in (a,b)\\
u(a)=x,
\end{cases}
\end{equation}
where $u(a)=x$ is omitted if $a=-\infty$.
\begin{enumerate}
\item Assume $-\infty<a<b<\infty$. The function $u$ is said to be a {\em strong solution} of \eqref{eqprobabstinvalprel} if $u\in W^{1,1}(a,b;X_0)\cap L^{1}(a,b;X_1)\cap C([a,b];X_0)$, $u(a) = x$ and \eqref{eqprobabstinvalprel} holds for almost all $t\in (a,b)$.
\item Assume $a=-\infty$ and $b<\infty$. The function $u$ is said to be a {\em strong solution} of \eqref{eqprobabstinvalprel} if $u\in W^{1,1}_{\text{loc}}(a,b;X_0)\cap L^{1}_{\text{loc}}(a,b;X_1)\cap C((a,b];X_0)$ and  $\lim_{s\to a} u(s) = 0$ and \eqref{eqprobabstinvalprel} holds for almost all $t\in (a,b)$.
\item Assume $b=\infty$. The function $u$ is said to be a {\em strong solution} of \eqref{eqprobabstinvalprel} if for every $T>a$ the restriction to $[a,T]$ or $(a,T]$ yield strong solutions in the sense of (1) and (2) respectively.
\end{enumerate}

Note the following simple embedding result for general $A_p$-weights.
\begin{lemma}\label{lem:embedding}
Let $p\in [1, \infty)$ and let $v\in A_p$, where $v\equiv1$ if $p=1$.
For $-\infty<a<b<\infty$,
$W^{1,p}((a,b),v;X_0) \hookrightarrow C([a,b];X_0)$ and
\begin{align*}
\|u\|_{C([a,b];X_0)} \leq C\|u\|_{W^{1,p}((a,b),v;X_0)}.
\end{align*}
\end{lemma}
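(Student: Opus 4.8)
The plan is to reduce the weighted statement to the classical unweighted Sobolev embedding on a fixed finite interval by exploiting that an $A_p$-weight is locally integrable together with its dual weight. First I would observe that the case $p=1$ (where $v\equiv 1$) is the classical fact that $W^{1,1}(a,b;X_0)\hookrightarrow C([a,b];X_0)$, which follows from the fundamental theorem of calculus for Bochner integrals: for $u\in W^{1,1}(a,b;X_0)$ one has $u(t) = u(s) + \int_s^t u'(r)\,dr$, so $u$ has a continuous representative and $\|u\|_{C([a,b];X_0)}$ is controlled by $\|u\|_{W^{1,1}(a,b;X_0)}$ after averaging over $s$.

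For $p\in(1,\infty)$ and $v\in A_p$, the key point is that $v$ and $v' = v^{-1/(p-1)}$ are both in $L^1(a,b)$ (finiteness of $[v]_{A_p}$ over the cube $Q = (a,b)$ gives exactly $\int_a^b v<\infty$ and $\int_a^b v'<\infty$). Then for $g\in L^p((a,b),v;X_0)$, Hölder's inequality with exponents $p$ and $p'$ gives
\[
\int_a^b \|g(r)\|_{X_0}\,dr \le \Big(\int_a^b \|g(r)\|_{X_0}^p v(r)\,dr\Big)^{1/p}\Big(\int_a^b v'(r)\,dr\Big)^{1/p'} = C_{v,(a,b)}\,\|g\|_{L^p((a,b),v;X_0)},
\]
so $L^p((a,b),v;X_0)\hookrightarrow L^1(a,b;X_0)$, and consequently $W^{1,p}((a,b),v;X_0)\hookrightarrow W^{1,1}(a,b;X_0)$ with the corresponding norm bound. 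Composing this with the $p=1$ embedding yields $W^{1,p}((a,b),v;X_0)\hookrightarrow C([a,b];X_0)$ together with $\|u\|_{C([a,b];X_0)} \le C\|u\|_{W^{1,p}((a,b),v;X_0)}$, with $C$ depending on $v$, $a$, $b$ and $p$.

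I do not anticipate a real obstacle here; this is essentially a bookkeeping argument. The only mildly delicate point is making sure the constant $C$ genuinely depends only on the data $p$, $v$, $a$, $b$ (and not, say, on $X_0$), which is automatic since the scalar Hölder estimate and the Bochner fundamental theorem of calculus are independent of the target space. One could also remark that the same proof shows the finer embedding into a Hölder space when $v^{-1/(p-1)}$ has better local integrability, but for the statement as given the crude reduction to $W^{1,1}$ suffices.
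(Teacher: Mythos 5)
Your proposal is correct and follows essentially the same route as the paper: both use the fact that $A_p$ forces $v$ and $v^{-1/(p-1)}$ to be locally integrable, so $L^p((a,b),v;X_0)\hookrightarrow L^1(a,b;X_0)$ by H\"older, and then invoke the Bochner fundamental theorem of calculus to pass to $C([a,b];X_0)$. The paper writes the final bound by taking $L^p((a,b),v)$-norms in the $s$-variable directly rather than factoring through $W^{1,1}$, but this is a cosmetic difference.
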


\begin{proof}
Since $L^p((a,b),v;X_0)\hookrightarrow L^1(a,b;X_0)$, and $u(t) - u(s) =  \int_s^t u'(r) \, dr$, the continuity of $u$ is immediate. Moreover,
\begin{align*}
\|u(t)\| \leq \|u(s)\| + \int_s^t \|u'(r)\| \, dr\leq \|u(s)\| + C\|u'\|_{L^p((a,b),v;X_0)}.
\end{align*}
Taking $L^p((a,b),v)$-norms with respect to the $s$-variable yields the result.
\end{proof}

There is a correspondence between the evolution problem \eqref{eqprobabstinvalprel} and evolution families as defined below.
\begin{definition}\label{def:evolut}
Let $(A(t))_{t\in \R}$ be as in \ref{as:operatorA}. A two parameter family of bounded linear operators $S(t,s)$, $s\leq t$, on a Banach space $X_{0}$ is called an \textit{evolution system for $A$} if the following conditions are satisfied:
\begin{itemize}
  \item[(i)] {$S(s,s)=I,\ S(t,r)S(r,s)=S(t,s)$ for $s\leq r\leq t$;}
  \item[(ii)]{$(t,s)\rightarrow S(t,s)$ is strongly continuous for $s\leq t$.}
  \item[(iii)] For all $s\in \R$ and $T\in (s, \infty)$, for all $x\in X_1$, the function $u:[s,T]\to X_0$ defined by $u(t) = S(t,s)x$ is in $L^1(s,T;X_1)\cap W^{1,1}(s,T;X_0)$ and satisfies $u'(t) + A(t) S(t,s)x=0$ for almost all $t\in(s,T)$.
  \item[(iv)] For all $t\in \R$ and $T\in (-\infty, t]$ for all $x\in X_1$, the function $u:[T,t]\to X_0$ defined by $u(s) = S(t,s)x$ is in $L^{1}(T,t;X_1)\cap W^{1,1}(T,t;X_0)$ and satisfies $u'(s) = S(t,s) A(s) x$.
\end{itemize}
\end{definition}
Note that (iii) says that $u$ is a strong solution of \eqref{eqprobabstinvalprel} with $f=0$.

\begin{example} \
If $A(t)=A$ is independent of $t$ and sectorial of angle $<\pi/2$, then $S(t,s)=e^{-(t-s)A}$ and the two-parameter family of operators reduces to the one-parameter family $e^{-tA}$, $t\geq 0$, which is the semigroup generated by $-A$.
\end{example}

\begin{example}\label{ex:evolution}
Assume $A:\R\to \calL(X_1, X_0)$ is strongly measurable and satisfies \ref{as:operatorA}.
Define a family of operators $\calA$ by
\[\calA = \{A(t): t\in \R\} \cup \Big\{\frac{1}{t-s}\int_{s}^{t} A(r) \, dr: s<t\Big\}.\]
Here we use the strong operator topology to define the integral. Assume there exist $\phi$, $M$ and $N$ such that all $B\in \calA$ are all sectorial of angle $\phi<\pi/2$ and for all $\lambda\in \Sigma_{\phi}$, \
\[\|\lambda (\lambda + B)^{-1}\|\leq M \ \ \  \text{and} \ \ \|x\|_{X_1}\leq N(\|x\|_{X_0} + \|Bx\|_{X_0}) \]
Assume for every $B_1,B_2\in \calA$ and $\lambda,\mu\in \Sigma_{\phi}$, the operators $(\lambda + B_1)^{-1}$ and $(\mu + B_2)^{-1}$ commute.
Define $S(t,s) = e^{-(t-s)A_{st}}$, where $A_{st} = \frac{1}{t-s}\int_s^t A(r) \, dr$.
Then $S$ is an evolution family for $A$. Here the exponential operator is defined by the usual Cauchy integral (see \cite[Chapter 2]{Lun}). Usually, no simple formula for $S$ is available if the operators in $\calA$ do not commute.

Note that in this special case the kernel $K(t,s) = \one_{\{s<t\}} A(0)e^{-\lambda(t-s)} S(t,s)$ satisfies the Calder\'on-Zygmund estimates of \cite{HaHy}. Indeed, note that $\frac{\partial K}{\partial t} = -\one_{\{s<t\}}(\lambda+A(t)) A(0) e^{-\lambda(t-s)}S(t,s)$ and $\frac{\partial K}{\partial s} = \one_{\{s<t\}}(\lambda+A(s)) A(0) e^{-\lambda(t-s)}S(t,s)$. Now since for all $r\in \R$ and $B\in \calA$, $\|A(r) x\|\leq NC (\|x\|_{X_0} + \|Bx\|_{X_0})$, we find that for all $r,\tau\in \R$ and $s<t$ letting $\sigma = (t+s)/2$,
\begin{align*}
\|A(r) A(\tau) S(t,s)\| & = \|A(r) S(t,\sigma)\| \,\|A(\tau) S(\sigma,s)\|
\\ & \leq N^2C^2 (1+ \|A_{\sigma t} S(t,\sigma)\|)(1+ \|A_{s \sigma} S(\sigma,s)\|)
\\ & \leq C'(1+(t-s)^{-1})^2 \leq \tfrac{3}{2}C'(1+(t-s)^{-2}).
\end{align*}
Therefore, the extrapolation results from the unweighted case to the weighted case of Remark \ref{rem:hormander} does hold in this situation.
\end{example}

\begin{proposition}\label{prop:evolutrepr}
Let $S$ be an evolution family for $A$.
Fix $x\in X_0$ and $f\in L^1(a,b;X_0)$. If \eqref{eqprobabstinvalprel} has a strong solution $u\in L^{1}(a,b;X_1)\cap W^{1,1}(a,b;X_0)\cap C([a,b];X_0)$, then it satisfies
\begin{equation}\label{eq:mild}
u(t) = S(t,s)u(s) + \int_s^t S(t,r) f(r) \, dr,  \ \ a<s\leq t<b,
\end{equation}
where we allow $s=a$ and $t = b$ whenever these are finite numbers.
In particular, strong solutions are unique if $a>-\infty$. In the case $a=-\infty$ this remains true if $\lim_{s\to -\infty} \|S(t,s)\| = 0$.
\end{proposition}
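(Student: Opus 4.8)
The plan is to fix $t\in(a,b)$ and to show that the function $g\colon r\mapsto S(t,r)u(r)$ is absolutely continuous on each compact subinterval of $(a,b)$ with $g'(r)=S(t,r)f(r)$ for a.e.\ $r$. Once this is known, integrating between $s$ and $t$ and using $S(t,t)=I$ yields \eqref{eq:mild} for $a<s\le t<b$; the cases $s=a$ or $t=b$ (when these are finite) then follow by letting $s\downarrow a$, $t\uparrow b$ and invoking the strong continuity of $S$, the continuity $u\in C([a,b];X_0)$, and dominated convergence in $\int_s^t S(t,r)f(r)\,dr$ (the uniform bound $\sup_{[s,t]}\|S(t,\cdot)\|<\infty$, from Banach--Steinhaus applied to the strongly continuous family, provides the dominating function).

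The core step is the product rule for $g$. For every $r$ with $u(r)\in X_1$ --- which is a.e.\ $r$, since $u\in L^1((a,b);X_1)$ --- and small $h$ one writes
\[g(r+h)-g(r)=S(t,r+h)\big(u(r+h)-u(r)\big)+\big(S(t,r+h)-S(t,r)\big)u(r),\]
and part~(iv) of Definition~\ref{def:evolut}, applied with $x=u(r)\in X_1$, identifies the second term with $\int_r^{r+h}S(t,\sigma)A(\sigma)u(r)\,d\sigma$. Dividing by $h$, the first term converges to $S(t,r)u'(r)$ at each Lebesgue point of $u'$, and the second converges to $S(t,r)A(r)u(r)$; since $u'(r)=f(r)-A(r)u(r)$ a.e.\ by \eqref{eqprobabstinvalprel}, this gives $g'(r)=S(t,r)f(r)$ a.e.

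Two points in this argument require care, and the second is the main obstacle. First, to see that $g$ is genuinely absolutely continuous one cannot estimate $\|g(b_i)-g(a_i)\|$ directly, because $u$ need not be bounded or continuous into $X_1$. The remedy is to insert the local average $c_i:=\frac{1}{b_i-a_i}\int_{a_i}^{b_i}u(\sigma)\,d\sigma$, which always lies in $X_1$: decomposing $g(b_i)-g(a_i)=S(t,b_i)(u(b_i)-c_i)+(S(t,b_i)-S(t,a_i))c_i+S(t,a_i)(c_i-u(a_i))$, applying part~(iv) to the middle term, and using the uniform bounds $\|A(\sigma)\|_{\calL(X_1,X_0)}\le C$ from \ref{as:operatorA} together with $\sup\|S(t,\cdot)\|\le M$, one obtains $\|g(b_i)-g(a_i)\|\le 2M\int_{a_i}^{b_i}\|u'\|_{X_0}+MC\int_{a_i}^{b_i}\|u\|_{X_1}$; absolute continuity of $g$ then follows from absolute continuity of the indefinite integrals of $\|u'\|_{X_0},\|u\|_{X_1}\in L^1$. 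Second --- and this is the delicate part --- the identity $\frac{d}{d\sigma}S(t,\sigma)x=S(t,\sigma)A(\sigma)x$ from part~(iv) holds only a.e.\ in $\sigma$, with the exceptional null set depending on $x\in X_1$, whereas above it is used at the moving point $x=u(r)$. I would resolve this by a density argument: since $u$ is a.e.\ valued in some separable closed subspace $X_1^0\subseteq X_1$, fix a countable dense set $D\subseteq X_1^0$; for each $x\in D$ one has $\frac{1}{h}\int_r^{r+h}S(t,\sigma)A(\sigma)x\,d\sigma\to S(t,r)A(r)x$ for a.e.\ $r$, and using the uniform operator bound $\|S(t,\sigma)A(\sigma)\|_{\calL(X_1,X_0)}\le MC$ this upgrades --- with a single common null set --- to all $x\in X_1^0$, which legitimizes the limit above for a.e.\ $r$.

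Finally, uniqueness follows from \eqref{eq:mild}: if $u_1,u_2$ are strong solutions with the same $x$ and $f$, then $u:=u_1-u_2$ is a strong solution of \eqref{eqprobabstinvalprel} with $f=0$ and zero initial value (respectively $\lim_{s\to a}u(s)=0$ if $a=-\infty$), so $u(t)=S(t,s)u(s)$ for all $a<s\le t<b$. If $a>-\infty$ we take $s=a$ to get $u(t)=S(t,a)\cdot0=0$; if $a=-\infty$, the hypothesis $\lim_{s\to-\infty}\|S(t,s)\|=0$ together with the boundedness of $s\mapsto u(s)$ near $-\infty$ gives $\|u(t)\|=\lim_{s\to-\infty}\|S(t,s)u(s)\|=0$. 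Hence $u\equiv0$.
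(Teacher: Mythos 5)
Your proof is correct and follows the same route as the paper: differentiate $r\mapsto S(t,r)u(r)$, use the equation to identify the derivative with $S(t,r)f(r)$ almost everywhere, and integrate. The paper disposes of the product-rule step with the phrase ``by approximation one easily checks''; you supply the missing details --- local averaging to obtain absolute continuity of $g$, and a separability/density argument to handle the $x$-dependent null set in Definition~\ref{def:evolut}(iv) --- and you use the correct sign $+S(t,r)A(r)u(r)$ coming from that definition (the paper's display \eqref{eq:product} has a sign typo, which is harmless since the subsequent cancellation is carried out with the correct sign).
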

A partial converse is used and proved in Theorem \ref{teostep1generalized}.

\begin{proof}
Fix $a<s<t<b$.
By approximation one easily checks that for $u\in W^{1,1}(s,b;X_0)\cap L^1(s,b;X_1)$, $r\mapsto S(t,r) u(r)$ is in $W^{1,1}(s,b;X_0)$ and
\begin{equation}\label{eq:product}
\frac{d}{dr} [S(t,r) u(r)] = -S(t,r) A(r) u(r) + S(t,r) u'(r), \ \ \ r\in (s,T).
\end{equation}
Applying \eqref{eq:product} to the strong solution $u$ of \eqref{eqprobabstinvalprel}, yields
$\frac{d}{dr} [S(t,r) u(r)]  = S(t,r) f(r)$.
Integrating this identity over $(s,t)$, we find  \eqref{eq:mild}.

If $a>-\infty$, then we may take $s=a$ in the above proof and hence we can replace $u(s) = u(a)$ by the initial value $x$.
If $a=\infty$, the additional assumption on $S$ allows us to let $s\to -\infty$ to obtain
\[u(t) = \int_{-\infty}^t S(t,r) f(r) \, dr,  \ \ t<b.\]
\end{proof}

\begin{corollary}\label{cor:uniqueS}
If $S_1$ and $S_2$ are both evolution families for $A$, then $S_1 = S_2$.
\end{corollary}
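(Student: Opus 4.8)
The plan is to deduce Corollary~\ref{cor:uniqueS} directly from Proposition~\ref{prop:evolutrepr}. Fix $s\in\R$ and $x\in X_1$, and consider the function $u(t) = S_1(t,s)x$ on an interval $[s,T]$ with $T>s$ finite. By condition (iii) in Definition~\ref{def:evolut}, $u$ is a strong solution of \eqref{eqprobabstinvalprel} on $(s,T)$ with $f=0$ and initial value $u(s)=x$; here I use that $u\in L^1(s,T;X_1)\cap W^{1,1}(s,T;X_0)$ and the continuity $u\in C([s,T];X_0)$ follows from Lemma~\ref{lem:embedding} applied with $v\equiv 1$ (or directly from the integral representation of $u$ via its derivative). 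Now apply Proposition~\ref{prop:evolutrepr} with the \emph{other} evolution family $S_2$: since $a=s>-\infty$ here, strong solutions are unique, and therefore $S_1(\cdot,s)x$ must coincide with the strong solution built from $S_2$, which by \eqref{eq:mild} (with $f=0$) is precisely $S_2(t,s)x$. Hence $S_1(t,s)x = S_2(t,s)x$ for all $t\in[s,T]$.

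The second step is a density argument to remove the restriction $x\in X_1$. We have shown $S_1(t,s)x = S_2(t,s)x$ for all $x\in X_1$ and all $s\le t$. Since $X_1$ is dense in $X_0$ by \ref{as:operatorA}, and since both $S_1(t,s)$ and $S_2(t,s)$ are bounded operators on $X_0$ (part of the definition of an evolution system), the equality extends to all $x\in X_0$. This gives $S_1(t,s) = S_2(t,s)$ for every pair $s\le t$, i.e.\ $S_1 = S_2$.

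The only mild subtlety is making sure Proposition~\ref{prop:evolutrepr} genuinely applies, i.e.\ that $u(t)=S_1(t,s)x$ really is a strong solution in the sense requiring membership in $C([s,T];X_0)$ and not merely in the weaker local sense. This is fine: Definition~\ref{def:evolut}(iii) already puts $u$ in $W^{1,1}(s,T;X_0)\cap L^1(s,T;X_1)$, and continuity on the closed interval $[s,T]$ is automatic from $W^{1,1}(s,T;X_0)\hookrightarrow C([s,T];X_0)$ together with $S_1(s,s)=I$ from (i), which pins down the value at the left endpoint as $x$. So there is no real obstacle here; the corollary is essentially a formal consequence of the uniqueness of strong solutions, and the proof is short.
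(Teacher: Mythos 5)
Your proof is correct and is exactly the argument the paper leaves implicit: for $x\in X_1$, Definition~\ref{def:evolut}(iii) makes $u(t)=S_1(t,s)x$ a strong solution of the homogeneous problem, so the uniqueness clause of Proposition~\ref{prop:evolutrepr} applied to $S_2$ forces $S_1(t,s)x=S_2(t,s)x$, and density of $X_1$ in $X_0$ together with boundedness of $S_i(t,s)$ finishes the job. The small aside about continuity at the left endpoint is fine but also already guaranteed directly by Definition~\ref{def:evolut}(i)--(ii).
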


\subsection{Assumptions on $A$}

The following condition can be interpreted as an abstract ellipticity condition.
\let\ALTERWERTA\theenumi
\let\ALTERWERTB\labelenumi
\def\theenumi{(E)}
\def\labelenumi{\textbf{(E)}}
\begin{enumerate}
\item\label{as:Evol} Assume that $X_0$ has finite cotype and assume that there exists $A_0\in \calL(X_1, X_0)$ which has a bounded $H^\infty$-calculus of angle $\sigma<\pi/2$ and there exists a strongly continuous evolution system $(T(t,s))_{s\leq t}$ for $(A(t)-A_{0})_{t\in \R}$
such that $e^{-rA_{0}}$ commutes with $T(t,s)$ for every $t\geq s$ and $r\in \R_{+}$ and
assume there exists an $\omega\in\R$ such that
\begin{equation}
\|T(t,s)\|_{\calL(X_{0})}\leq Me^{\omega(t-s)}, \ \ \ s\leq t.
\nonumber
\end{equation}
\end{enumerate}
\let\theenumi\ALTERWERTA
\let\labelenumi\ALTERWERTB
Set $T(t,s) =0 $ for $t<s$. The following $\Rr$-boundedness condition will be used.
\let\ALTERWERTA\theenumi
\let\ALTERWERTB\labelenumi
\def\theenumi{(Rbdd)}
\def\labelenumi{\textbf{(Rbdd)}}
\begin{enumerate}
\item\label{as:RbddT} Assume that the family $\mathcal{I}:=\{I_{\omega,kT}:\ k\in \calK\}\subseteq\calL(L^{p}(\R,v,X_{0}))$ is $\Rr$-bounded, where for $k\in \mathscr{K}$ and $f\in L^{p}(\R,v;X_{0})$,
\begin{equation}
I_{\omega, kT}f(t):=\int_{\R}k(t-s)e^{-\omega|t-s|}T(t,s)f(s)ds.
\nonumber
\end{equation}
\end{enumerate}
\let\theenumi\ALTERWERTA
\let\labelenumi\ALTERWERTB

\begin{remark}
\
\begin{enumerate}
\item By \ref{as:operatorA} and \ref{as:Evol} there is a constant $C$ such that
\begin{equation}\label{eq:equivalnorms}
\begin{aligned}
C^{-1}(\|A(t)x\|_{X_0}+\|x\|_{X_0}) & \leq \|A_0 x\|_{X_{0}} +\|x\|_{X_0}
\\ & \leq C(\|A(t)x\|_{X_0}+\|x\|_{X_0}), \ \ \  t\in \R
\end{aligned}
\end{equation}
and both norms are equivalent to $\|x\|_{X_1}$.

\item For $m$ even, if the $A(t)$ are $m$-th order elliptic operators with $x$-independent coefficients one typically takes $A_0 =  \delta (-\Delta)^m$ with $\delta>0$ small enough.
\item For $p,q\in (1, \infty)$, $v\in A_p$ and $X = L^q$, the $\Rr$-boundedness assumption follows from the weighted boundedness of $T(t,s)$ for all $w\in A_q$ (see Theorem \ref{thm:weightedR}).
\item Although we allow $p=1$ and $v=1$ in the above assumptions, checking the assumption \ref{as:RbddT} seems more difficult in this limiting case.
\end{enumerate}
\end{remark}

\begin{lemma}\label{lem:evfam}
Under the assumptions \ref{as:operatorA} and \ref{as:Evol} the evolution family $S$ for $A$ uniquely exists and satisfies
\begin{equation}\label{eq:Sfactor}
\begin{aligned}
S(t,s) & =e^{-(t-s)A_0} T(t,s)
\\ & = T(t,s)e^{-(t-s)A_0}  =e^{-\frac12(t-s)A_0} T(t,s)e^{-\frac12(t-s)A_0}, \ \ s\leq t,
\end{aligned}
\end{equation}
and there is a constant $C$ such that for all $s\leq t$, $\|S(t,s)\|_{\calL(X_0)}\leq Ce^{\omega(t-s)}$.
Moreover, there is a constant $C$ such that,
\[\|S(t,s)\|_{\calL(X_1)} \leq C\|S(t,s)\|_{\calL(X_0)},  \ \  s\leq t.\]
\end{lemma}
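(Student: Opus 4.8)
The plan is to construct the evolution family for $A$ explicitly as $S(t,s):=e^{-(t-s)A_0}T(t,s)$ for $s\le t$, to verify the four conditions of Definition \ref{def:evolut}, and to obtain uniqueness from Corollary \ref{cor:uniqueS}. Since $A_0$ has a bounded $H^\infty$-calculus of angle $<\pi/2$ it is sectorial of angle $<\pi/2$, so by \cite[Proposition 2.1.1]{Lun} it generates a bounded analytic strongly continuous semigroup $(e^{-rA_0})_{r\ge0}$ on $X_0$ with $M_0:=\sup_{r\ge0}\|e^{-rA_0}\|_{\calL(X_0)}<\infty$ (cf.\ \eqref{eq:semigroupbound}); recall also that $e^{-rA_0}$ maps $X_1=D(A_0)$ into itself and commutes with $A_0$ there. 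The three formulas for $S(t,s)$ coincide: $e^{-(t-s)A_0}T(t,s)=T(t,s)e^{-(t-s)A_0}$ is the commutation assumed in \ref{as:Evol} (with $r=t-s$), and the symmetric form then follows from the semigroup law. The bound $\|S(t,s)\|_{\calL(X_0)}\le M_0Me^{\omega(t-s)}$ is immediate.

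The structural fact behind the remaining assertions is that $T(t,s)$ maps $X_1$ into $X_1$ and commutes there with $A_0$. Indeed, for $x\in X_1=D(A_0)$ one has $r^{-1}(I-e^{-rA_0})x\to A_0x$ in $X_0$ as $r\downarrow0$; applying the bounded operator $T(t,s)$ and using the commutation gives $r^{-1}(I-e^{-rA_0})T(t,s)x\to T(t,s)A_0x$, so $T(t,s)x\in D(A_0)=X_1$ and $A_0T(t,s)x=T(t,s)A_0x$. With the norm equivalence \eqref{eq:equivalnorms} this yields $\|T(t,s)\|_{\calL(X_1)}\le C\|T(t,s)\|_{\calL(X_0)}$; and since $e^{-rA_0}$ has the same two properties, $S(t,s)$ commutes with $A_0$ on $X_1$, whence for $x\in X_1$ we get $\|A_0S(t,s)x\|_{X_0}=\|S(t,s)A_0x\|_{X_0}\le\|S(t,s)\|_{\calL(X_0)}\|A_0x\|_{X_0}$ and, using \eqref{eq:equivalnorms} once more, $\|S(t,s)\|_{\calL(X_1)}\le C\|S(t,s)\|_{\calL(X_0)}$.

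Conditions (i)--(ii) of Definition \ref{def:evolut} are routine: $S(s,s)=I$, the identity $S(t,r)S(r,s)=S(t,s)$ follows from the commutation, the semigroup law and the corresponding identity for $T$, and joint strong continuity of $(t,s)\mapsto S(t,s)x$ follows from that of $T$ and of the semigroup (and $M_0<\infty$) via $S(t,s)x-S(t',s')x=e^{-(t-s)A_0}(T(t,s)x-T(t',s')x)+(e^{-(t-s)A_0}-e^{-(t'-s')A_0})T(t',s')x$. For (iii) and (iv), fix $x\in X_1$ and use a product rule for Bochner--Sobolev functions, argued by approximation as in the proof of Proposition \ref{prop:evolutrepr} with $S(t,\cdot)$ replaced by the semigroup; the structural fact above (in particular $T(t,s)x\in D(A_0)$ for \emph{every} $s$, so that $A_0e^{-(t-s)A_0}T(t,s)x=e^{-(t-s)A_0}T(t,s)A_0x$ is regular at $s=t$) gives for (iv) that $s\mapsto u(s):=S(t,s)x$ has the required regularity with $u'(s)=A_0e^{-(t-s)A_0}T(t,s)x+e^{-(t-s)A_0}T(t,s)(A(s)-A_0)x=S(t,s)A(s)x$, so (iv) holds; and for (iii), with $v(t):=T(t,s)x$ — which satisfies $v'+(A(t)-A_0)v=0$ a.e.\ by Definition \ref{def:evolut}(iii) for $T$, noting $D(A(t)-A_0)=X_1$ — it gives $u(t):=S(t,s)x=e^{-(t-s)A_0}v(t)\in W^{1,1}(s,\tau;X_0)\cap L^1(s,\tau;X_1)$ for every $\tau>s$, with $u'(t)=-A_0e^{-(t-s)A_0}v(t)+e^{-(t-s)A_0}v'(t)=-e^{-(t-s)A_0}A(t)v(t)$ a.e.

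It remains — and this is the only substantial point — to identify $-e^{-(t-s)A_0}A(t)v(t)$ with $-A(t)u(t)=-A(t)e^{-(t-s)A_0}v(t)$, i.e.\ to show that for a.e.\ $t$ the operator $A(t)$ commutes with every $e^{-rA_0}$ on $X_1$. To this end, apply Definition \ref{def:evolut}(iv) to $T$: for fixed $t$ and $x\in X_1$, integrating $\tfrac{d}{d\sigma}T(t,\sigma)x=T(t,\sigma)(A(\sigma)-A_0)x$ over $(s,t)$ and using $T(t,t)=I$ gives $T(t,s)x=x-\int_s^tT(t,\sigma)(A(\sigma)-A_0)x\,d\sigma$. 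Applying $e^{-rA_0}$ (which commutes with each $T(t,\cdot)$ and, being bounded, passes inside the Bochner integral) and subtracting the same identity applied to the vector $e^{-rA_0}x$ (which lies in $X_1$), we obtain
\[\int_s^tT(t,\sigma)\big[(A(\sigma)-A_0)e^{-rA_0}x-e^{-rA_0}(A(\sigma)-A_0)x\big]\,d\sigma=0\qquad(s<t),\]
hence the integrand vanishes for a.e.\ $\sigma$, and letting $t\downarrow\sigma$ and using strong continuity of $T(\cdot,\sigma)$ gives $(A(\sigma)-A_0)e^{-rA_0}x=e^{-rA_0}(A(\sigma)-A_0)x$ for a.e.\ $\sigma$. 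Running this over countable dense sets of values of $t$, of $x$ in $X_1$ and of $r$ in $\R_+$, and using continuity of both sides in $x\in X_1$ and in $r\ge0$, we get a single null set off which $A(\sigma)$ commutes with every $e^{-rA_0}$ on $X_1$. Feeding this back (with $r=t-s$, applied to $v(t)\in X_1$) into the formula for $u'$ completes (iii), and with it the proof. The commutation argument just described is the expected main obstacle; the product-rule steps are standard, and the two norm bounds are automatic once one knows that $S(t,s)$ commutes with $A_0$ on $X_1$.
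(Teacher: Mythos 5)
Your proof is correct and follows the same strategy as the paper's: set $S(t,s)=e^{-(t-s)A_0}T(t,s)$, verify the conditions of Definition \ref{def:evolut}, and conclude uniqueness from Corollary \ref{cor:uniqueS}; the norm bounds are obtained the same way. The genuine value you add is that you isolate and prove the commutation facts that the paper's displayed computation uses tacitly: that $T(t,s)$ maps $X_1$ into $X_1$ and commutes with $A_0$ there, and that for a.e.\ $\sigma$ the operator $A(\sigma)-A_0$ commutes with $e^{-rA_0}$ on $X_1$. The paper's line
\begin{equation*}
\tfrac{d}{dt}S(t,s)x=-A_0e^{-(t-s)A_0}T(t,s)x-(A(t)-A_0)T(t,s)e^{-(t-s)A_0}x
\end{equation*}
is reached from the direct product-rule output $-A_0e^{-(t-s)A_0}T(t,s)x-e^{-(t-s)A_0}(A(t)-A_0)T(t,s)x$ precisely by pushing $e^{-(t-s)A_0}$ through $A(t)-A_0$, and that step does not follow from the commutation of $T$ with $e^{-rA_0}$ alone. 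Your derivation of it from the integrated form of Definition \ref{def:evolut}(iv) for $T$ is the natural missing justification, and it is exactly the reason the abstract hypothesis \ref{as:Evol} suffices.

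One small repair: the final step, ``running over countable dense sets of $x$ in $X_1$'', needs $X_1$ separable, which \ref{as:operatorA} does not assume. This is avoidable because you never need the commutation for all $x\in X_1$ at once: to verify (iii) with $x\in X_1$ fixed it suffices that for a.e.\ $t>s$ the vector $v(t)=T(t,s)x$ satisfies $e^{-(t-s)A_0}(A(t)-A_0)v(t)=(A(t)-A_0)e^{-(t-s)A_0}v(t)$. Your identity gives this for each fixed rational $r$ in place of $t-s$, for a.e.\ $t$; since both sides are continuous in $r\in[0,\infty)$ (because $r\mapsto e^{-rA_0}v(t)$ is continuous into $X_1$ whenever $v(t)\in X_1$, and $A(t)-A_0\in\calL(X_1,X_0)$), it holds for all $r\ge0$ off a single $t$-null set, in particular for $r=t-s$. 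With that substitution the argument closes without any separability assumption.
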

\begin{proof}
The second identity follows from \ref{as:Evol}. To prove the first identity, we check that $S(t,s)$ given by \eqref{eq:Sfactor} is an evolution family for $A$. By Corollary \ref{cor:uniqueS} this would complete the proof. It is simple to check properties (i) and (ii) of Definition \ref{def:evolut} and it remains to check (iii) and (iv). Let $x\in X_1$. By the product rule for weak derivatives and \ref{as:Evol} we find
\begin{align*}
\frac{d}{dt} S(t,s) x &= - A_0 e^{-(t-s)A_0} T(t,s)x - (A(t) - A_0) T(t,s)e^{-(t-s)A_0}x
\\ & = - A_0S(t,s)x - (A(t)-A_0) S(t,s)x = -A(t)S(t,s)x.
\end{align*}
Similarly, one checks that $\frac{d}{ds} S(t,s) x = S(t,s)A(s) x$. The fact that $S(t,s)$ satisfies the same exponential estimate as $T(t,s)$ follows from the estimate \eqref{eq:semigroupbound} applied to $A_0$.

By assumptions, for every $x\in X_{1}$, $e^{-rA_{0}}S(t,s)x=S(t,s)e^{-rA_{0}}x$. Thus, by differentiation we find $-A_{0}S(t,s)x=-S(t,s)A_{0}x$ and therefore
\begin{align*}
\|S(t,s)x\|_{X_{1}}&\leq C(\|A_{0}S(t,s)x\|_{X_{0}}+\|S(t,s)x\|_{X_{0}})
\\ & \leq C(\|S(t,s)A_{0}x\|_{X_{0}}+\|S(t,s)x\|_{X_{0}})
\\ &\leq C\|S(t,s)\|_{\calL(X_0)} (\|A_{0}x\|_{X_{0}}+\|x\|_{X_{0}})\leq C'\|S(t,s)\|_{\calL(X_0)} \|x\|_{X_{1}}.
\end{align*}
\end{proof}

\subsection{Main result on maximal $L^p$-regularity}

Next we will present our main abstract result on the regularity of the strong solution to the problem
\begin{equation}\label{problemteoA0inf}
u'(t)+(A(t)+\lambda)u(t)=f(t),\ \ t\in\R.\\
\end{equation}

\begin{theorem}\label{teostep1generalized}
Assume \ref{as:operatorA}, \ref{as:Evol}, and \ref{as:RbddT}. For any $\lambda>\omega$ and for every $f\in L^{p}(\R,v;X_{0})$ there exists a unique strong solution $u\in W^{1,p}(\R,v;X_{0})\cap L^{p}(\R,v;X_{1})$ of \eqref{problemteoA0inf}.
Moreover, there is a constant $C$ independent of $f$ and $\lambda$ such that
\begin{equation}\label{eqaprioriA0gen}
\begin{aligned}
(\lambda-\omega)\|u\|_{L^{p}(\R,v,X_{0})}+\|A_0 u\|_{L^{p}(\R,v;X_{0})}& \leq C\|f\|_{L^{p}(\R,v;X_{0})}
\\  \|u'\|_{L^{p}(\R,v;X_{0})} & \leq \tfrac{C(\lambda-\omega+1)}{\lambda-\omega}  \|f\|_{L^{p}(\R,v;X_{0})}.
\end{aligned}
\end{equation}
\end{theorem}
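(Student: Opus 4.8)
The plan is to construct the solution explicitly through the evolution family and then read off the three estimates, the decisive one --- the $X_1$-bound --- coming from Theorem \ref{thm:singular}. By Lemma \ref{lem:evfam} the evolution family $S$ for $A$ exists and factorizes as $S(t,s)=e^{-(t-s)A_0}T(t,s)$, and $S_\lambda(t,s):=e^{-\lambda(t-s)}S(t,s)$ is the evolution family for $(A(t)+\lambda)_{t\in\R}$, with $\|S_\lambda(t,s)\|_{\calL(X_0)}\le Me^{-(\lambda-\omega)(t-s)}$ for $s\le t$. For $f\in L^p(\R,v;X_0)$ I would set
\[u(t):=\int_{-\infty}^{t}S_\lambda(t,s)f(s)\,ds.\]
Since $r\mapsto e^{-(\lambda-\omega)|r|}$ is even, radially decreasing and has $L^1$-norm $\tfrac{2}{\lambda-\omega}$, the function $\tfrac{\lambda-\omega}{2}e^{-(\lambda-\omega)|\cdot|}$ lies in $\calK$, so $\|u(t)\|_{X_0}\le \tfrac{2M}{\lambda-\omega}M(\|f\|_{X_0})(t)$, and boundedness of the Hardy--Littlewood maximal operator on $L^p(\R,v)$ (recall $v\in A_p$) yields $(\lambda-\omega)\|u\|_{L^p(\R,v;X_0)}\le C\|f\|_{L^p(\R,v;X_0)}$.

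The heart of the proof is to identify $A_0u$ with a singular integral $I_Kf$ of exactly the form \ref{as:Kfact} and apply Theorem \ref{thm:singular}. Setting $\varphi(z)=z^{1/2}e^{-z/2}$, one has $\varphi\in H^\infty_0(\Sigma_{\sigma'})$ for any $\sigma'<\pi/2$ and $\varphi(z)^2=ze^{-z}$. Since $T(t,s)$ commutes with $(e^{-rA_0})_{r\ge0}$ it commutes with the resolvents of $A_0$ and hence with $\varphi(rA_0)$, so for $t>s$, writing $r=t-s$,
\[A_0e^{-\lambda r}S(t,s)=e^{-\lambda r}A_0e^{-rA_0}T(t,s)=\frac{1}{t-s}\,\varphi((t-s)A_0)\,\tilde{T}(t,s)\,\varphi((t-s)A_0),\]
where $\tilde{T}(t,s):=e^{-\lambda(t-s)}T(t,s)$ (vanishing for $t<s$ by the convention $T(t,s)=0$). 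I would then check \ref{as:HXpv}--\ref{as:Rbdd} of Theorem \ref{thm:singular} for this kernel with $X=X_0$, $A_0=A_1$ the operator from \ref{as:Evol} and $\phi_0=\phi_1=\varphi$: \ref{as:HXpv} and \ref{as:Hinfty} are immediate from \ref{as:operatorA} and \ref{as:Evol} (finite cotype of $X_0$, bounded $H^\infty$-calculus of $A_0$ of angle $<\pi/2$); $\tilde{T}$ is uniformly bounded since $\|e^{-\lambda(t-s)}T(t,s)\|\le Me^{-(\lambda-\omega)(t-s)}\le M$ for $s\le t$; and for \ref{as:Rbdd} one observes that for $k\in\calK$,
\[I_{k\tilde{T}}f(t)=\int_{\R}k(t-s)e^{-\lambda(t-s)}T(t,s)f(s)\,ds=I_{\omega,k'T}f(t),\qquad k'(r):=k(r)e^{-(\lambda-\omega)r}\one_{\{r>0\}},\]
the values of $k'$ on $(-\infty,0)$ being irrelevant because $T(t,s)=0$ for $t<s$; as $|k'|\le|k|$ pointwise, $k'\in\calK$, so $\{I_{k\tilde{T}}:k\in\calK\}\subseteq\mathcal{I}$, which is $\Rr$-bounded by \ref{as:RbddT}. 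Theorem \ref{thm:singular} then gives boundedness of $I_K$ on $L^p(\R,v;X_0)$ with bound governed by $\Rr(\mathcal{I})$ and the $H^\infty$-calculus constant of $A_0$, hence independent of $\lambda>\omega$. To turn this into the estimate for $u$ I would argue by density: for $f\in L^p(\R,v;X_1)$ (dense in $L^p(\R,v;X_0)$ since $X_1\hookrightarrow X_0$ densely) the integrand of $u(t)$ takes values in $X_1$ and $A_0u(t)=\int_{-\infty}^{t}A_0e^{-\lambda(t-s)}S(t,s)f(s)\,ds=I_Kf(t)$ as an absolutely convergent integral; closedness of $A_0$ together with boundedness of $I_K$ and of $f\mapsto u$ then extends $A_0u=I_Kf$ to all $f\in L^p(\R,v;X_0)$, and the norm equivalence \eqref{eq:equivalnorms} yields $u\in L^p(\R,v;X_1)$ with $\|A_0u\|_{L^p(\R,v;X_0)}\le C\|f\|_{L^p(\R,v;X_0)}$.

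For the time derivative and the strong-solution property I would differentiate the mild formula: on $L^p(\R,v;X_1)$ the boundary term gives $S_\lambda(t,t)f(t)=f(t)$ and differentiation under the integral (using $\partial_t S_\lambda(t,s)x=-(A(t)+\lambda)S_\lambda(t,s)x$ for $x\in X_1$) gives $u'(t)=f(t)-(A(t)+\lambda)u(t)$. From \eqref{eq:equivalnorms} one has $\|(A(t)-A_0)x\|_{X_0}\le C(\|A_0x\|_{X_0}+\|x\|_{X_0})$, hence $\|(A(t)+\lambda)u(t)\|_{X_0}\le C\|A_0u(t)\|_{X_0}+(C+\lambda)\|u(t)\|_{X_0}$; combining with the two estimates above and writing $C+\lambda=(C+\omega)+(\lambda-\omega)$ produces $\|u'\|_{L^p(\R,v;X_0)}\le \tfrac{C(\lambda-\omega+1)}{\lambda-\omega}\|f\|_{L^p(\R,v;X_0)}$. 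A density argument as before extends this and shows $u\in W^{1,p}(\R,v;X_0)$ with $u'+(A(\cdot)+\lambda)u=f$ a.e.; Lemma \ref{lem:embedding} on bounded subintervals gives $u\in C(\R;X_0)$, and the exponential decay of $S_\lambda$ gives $\lim_{s\to-\infty}u(s)=0$, so $u$ is a strong solution. For uniqueness I would use Proposition \ref{prop:evolutrepr}: a strong solution $w$ of the homogeneous problem satisfies $w(t)=S_\lambda(t,a)w(a)$ for $a<t$, and since $\|S_\lambda(t,a)\|\le Me^{-(\lambda-\omega)(t-a)}$ while an $A_p$-weight has at most polynomial decay, letting $a\to-\infty$ forces $w\equiv0$.

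I expect the main obstacle to be the second paragraph: casting $A_0u$ as a singular integral of precisely the operator-valued form \ref{as:Kfact} --- the symmetric splitting $\varphi(z)^2=ze^{-z}$ and the absorption of the scalar factor $e^{-\lambda|t-s|}\one_{\{s<t\}}$ into $T$ while preserving hypothesis \ref{as:Rbdd} --- together with the density/closedness argument that makes $A_0u=I_Kf$ rigorous, since near $s=t$ the integrand $A_0e^{-(t-s)A_0}T(t,s)f(s)$ is only conditionally (not absolutely) integrable. The remaining steps are bookkeeping with $A_p$-weights, the norm equivalence \eqref{eq:equivalnorms}, and standard facts on evolution families.
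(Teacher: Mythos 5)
Your proposal is correct and follows essentially the same route as the paper: represent $u$ via the evolution family $S_\lambda$, factor $A_0u=I_Kf$ with $\phi(z)=z^{1/2}e^{-z/2}$ and $T_\lambda(t,s)=e^{-\lambda(t-s)}T(t,s)$, observe that $k\mapsto k(\cdot)e^{-\mu\,\cdot}\one_{\{\cdot>0\}}$ preserves $\calK$ so that (H4) holds with a $\lambda$-uniform bound, apply Theorem~\ref{thm:singular}, and obtain the $X_0$- and $u'$-estimates from the maximal function and the norm equivalence \eqref{eq:equivalnorms}. The only cosmetic differences are that the paper normalizes $\omega=0$ at the outset and runs the density step with compactly supported $X_1$-valued data, while you keep $\omega$ explicit and invoke polynomial growth of $A_p$-weights for uniqueness (the paper instead builds $\lim_{s\to-\infty}u(s)=0$ into the definition of a strong solution and uses Proposition~\ref{prop:evolutrepr} directly).
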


\begin{remark}\label{rem:uprimeest}
Parts of the theorem can be extended to $\lambda= \omega$, but we will not consider this in detail.
The constant in the estimate \eqref{eqaprioriA0gen} for $u'$ can be improved if one knows $\|A(t) x\|_{X_0}\leq C\|A_0x\|_{X_0}$ or when taking $\lambda \geq \omega+1$ for instance.
\end{remark}

Before we turn to the proof of Theorem \ref{teostep1generalized} we introduce some shorthand notation.
Let $S_{\lambda}(t,s) = e^{-\lambda(t-s)}S(t,s)$ and $T_{\lambda}(t,s) = e^{-\lambda(t-s)}T(t,s)$. Since by Lemma \ref{lem:evfam}, $S$ is an evolution family for $A$, also $S_{\lambda}$ is the evolution family for $A(t) + \lambda$. Similarly, $T_{\lambda}(t,s)$ is an evolution family for $A(t) - A_0+ \lambda$.
By \eqref{eq:mild} if the support of $f\in L^1(\R;X_0)$ is finite, a strong solution of \eqref{problemteoA0inf} satisfies
\begin{equation}\label{eq:strongsolutionabstractdef}
u(t)=\int_{-\infty}^{t}S_{\lambda}(t,r)f(r) \, dr,  \ \ \ t\in \R.
\end{equation}

\begin{proof}

Replacing $A(t)$ and $T(t,s)$ by $A(t) + \omega$ and $e^{-(t-s)\omega}T(t,s)$ one sees that without loss of generality we may assume $\omega = 0$ in \ref{as:Evol} and \ref{as:RbddT}. We first prove that $u$ given by \eqref{eq:strongsolutionabstractdef}, is a strong solution and \eqref{eqaprioriA0gen} holds. First let $f\in L^p(\R,v;X_{1})$ and such that $f$ has support on the finite interval $[a,b]$. Later on we use a density argument for general $f\in L^p(\R,v;X_0)$. Let $u$ be defined as in \eqref{eq:strongsolutionabstractdef}. Note that $u = 0$ on $(-\infty, a]$.

\medskip

{\em Step 1}: By Lemma \ref{lem:evfam} the function $u$ defined by  \eqref{eq:strongsolutionabstractdef} satisfies
\begin{align*}
\|u(t)\|_{X_1} & \leq \int_{-\infty}^{t} \|S_{\lambda}(t,s)\|_{\calL(X_1)} \|f(s)\|_{X_1} \, ds
\\ & \leq C' \|f\|_{L^1(a,b;X_1)}\leq C([v]_{A_p}) \|f\|_{L^p(\R,v;X_1)}.
\end{align*}
We show that $u$ is a strong solution of \eqref{eqprobabstinvalprel}.
Observe that from Fubini's Theorem and $\frac{d}{ds}S_{\lambda}(s,r) x = -(\lambda+A(s))S_{\lambda}(s,r)x$ for $x\in X_1$, we deduce
\begin{align*}
\int_{-\infty}^{t}(\lambda+A(s))u(s)\, ds&=\int_{-\infty}^{t}\int_{-\infty}^{s}(\lambda+A(s))S_{\lambda}(s,r)f(r)\, dr\, ds
\\ & =\int_{-\infty}^{t}\int_{r}^{t}(\lambda+A(s))S_{\lambda}(s,r)f(r)\, ds\, dr
\\ &=\int_{-\infty}^{t}(-S_{\lambda}(t,r)f(r)+f(r))dr=-u(t)+\int_{-\infty}^{t}f(r)dr.
\end{align*}
Therefore, $u$ is a strong solution of \eqref{problemteoA0inf}.

\medskip

{\em Step 2}: In this step we show there exists a $C\geq 0$ independent of $\lambda$ and $f$ such that
\begin{equation}\label{eq:maxregest1}
\|A_0u\|_{L^{p}(\R,v;X_{0})} \leq C\|f\|_{L^{p}(\R,v;X_{0})}.
\end{equation}
By \eqref{eq:Sfactor} and \eqref{eq:strongsolutionabstractdef} we can write $A_0 u = I_Kf$, where
\[K(t,s) = \frac{\phi((t-s)A_0) T_{\lambda}(t,s) \phi((t-s)A_0)}{t-s}.\]
Here $\phi\in H^\infty_{0}(\Sigma_{\sigma'})$ for $\sigma'<\pi/2$ is given by  $\phi(z) = z^{1/2} e^{-z/2}$. In order to apply Theorem \ref{thm:singular}, we note that all assumptions \ref{as:HXpv}-\ref{as:Rbdd} are satisfied. Only the $\Rr$-boundedness condition \ref{as:Rbdd} requires some comment. Note that $k\in \calK$ implies that for all $\lambda\geq 0$, $k_{\lambda}\in \calK$ where $k_{\lambda}(t) = e^{-\lambda t}\one_{\{t>0\}} k(t)$. Therefore, it follows from \ref{as:RbddT} that for all $\lambda\geq 0$,
\[\Rr(I_{k T_{\lambda}}: k\in \calK) = \Rr(I_{k_{\lambda} T}: k\in \calK) \leq \Rr(I_{k T}: k\in \calK)<\infty\]
which gives \ref{as:Rbdd} with a uniform estimate in $\lambda$. Now \eqref{eq:maxregest1} follows from Theorem \ref{thm:singular}.

\medskip

{\em Step 3}: In this step we show there exists a $C\geq 0$ independent of $\lambda$ and $f$ such that
\begin{equation}\label{eq:maxregest2}
\lambda \|u\|_{L^{p}(\R,v;X_{0})} \leq C\|f\|_{L^{p}(\R,v;X_{0})}.
\end{equation}
Using \eqref{eq:strongsolutionabstractdef} and $\|S(t,s)\|\leq C$ we find
\begin{align*}
\lambda \|u\|_{X_{0}}&\leq \int_{-\infty}^{t} \lambda\|S_{\lambda}(t,s)f(s)\|_{X_0} \, ds \leq C\int_{-\infty}^{t} \lambda e^{-\lambda(t-s)} \|f(s)\|_{X_{0}}ds
 \leq  Cr_{\lambda} * g(t),
\end{align*}
where $r_{\lambda}(t) = \lambda e^{-\lambda|t|}$ and $g(s) = \|f(s)\|_{X_{0}}$
As $r_1\in L^1(\R)$ is radially decreasing by \cite[Theorem 2.1.10]{GrafakosClassical} and
\cite[Theorem 9.1.9]{GrafakosModern},
\begin{align*}
\lambda\|u\|_{L^{p}(\R_{+},v;X_{0})} & \leq C\|r_{\lambda} * g\|_{L^{p}(\R,v)} \\ & \leq   C \|Mg\|_{L^{p}(\R,v)} \leq C'\|g\|_{L^{p}(\R,v)} = C'\|f\|_{L^{p}(\R,v;X_{0})}
\end{align*}
in the case $p>1$. The case $p=1$ follows from Fubini's theorem and the convention $v\equiv 1$.
This estimate yields \eqref{eq:maxregest2}.

\medskip

{\em  Step 4:} To prove the estimate for $u'$ note that $u' = -\lambda u -A u+f$, and hence
writing $Z = L^{p}(\R,v;X_{0})$, by \eqref{eq:equivalnorms} and \eqref{eqaprioriA0gen}, we obtain
\begin{align*}
\|u'\|_{Z}& \leq \lambda \|u\|_{Z}  +\|A u\|_{Z} +\|f\|_{Z}
\\ & \leq (\lambda + C) \|u\|_{Z}  + C\|A_0 u\|_{Z} +\|f\|_{Z}
\leq K \Big(\frac{\lambda + C}{\lambda-\omega} + 1\Big)  \|f\|_{Z}.
\end{align*}
This finishes the proof of \eqref{eqaprioriA0gen} for $f\in L^p(\R;X_{1})$ with support in $[a,b]$

\medskip

{\em Step 5}: Now let $f\in L^p(\R,v;X_0)$. Choose for $n\geq 1$,  $f_n\in L^p(\R,v;X_{1})$ with compact support and such that $f_n\to f$ in $L^p(\R,v;X_{0})$. For each $n\geq 1$ let $u_n$ be the corresponding strong solution of \eqref{problemteoA0inf} with $f$ replaced by $f_n$.
From \eqref{eqaprioriA0gen} applied to $u_n - u_m$ we can deduce that $(u_n)_{n\geq 1}$ is a Cauchy sequence and hence convergent to some $\overline{u}$ in $L^p(\R,v;X_{1})\cap W^{1,p}(\R,v;X_0)$.  On the other hand, for $u$ defined as in \eqref{eq:strongsolutionabstractdef} one can show in the same way as in Step 3 that for almost all $t\in \R$,
\begin{align*}
\|u(t) - u_n(t)\|& \leq \int_{-\infty}^t \|S_{\lambda}(t,s)\| \, \|f(s) - f_n(s)\| \,ds
\\ & \leq C \int_{-\infty}^t e^{-\lambda (t-s)}\|f(s) - f_n(s)\| \,ds
\leq C\ M( \|f-f_n\|)(t),
\end{align*}
where $M$ is the Hardy-Littlewood maximal operator.
Taking $L^p(v)$-norms and using the boundedness of the maximal operator we find $u_n\to u$ in $L^p(\R,v;X_0)$ and hence $u=\overline{u}$ if $p\in (1, \infty)$. Taking limits (along a subsequence), \eqref{problemteoA0inf} and \eqref{eqaprioriA0gen} follow if $p\in (1, \infty)$. The case $p=1$ is proved similarly using Young's inequality.
\end{proof}

It will be convenient to restate our results in terms of maximal $L^p_v$-regularity. For $-\infty\leq a<b\leq \infty$, let
\[\MR = W^{1,p}((a,b),v;X_0)\cap L^{p}((a,b),v;X_{1}).\]

\begin{definition}\label{def:MR}
Let $-\infty\leq a<b\leq \infty$. Assume \ref{as:operatorA} holds and let $p\in [1, \infty)$ and $v\in A_p$ with the convention that $v\equiv 1$ if $p=1$. The operator-valued function $A$ is said to have {\em maximal $L^p_v$-regularity on $(a,b)$} if for all $f\in L^p((a,b),v;X_0)$, the problem
\begin{equation}\label{eq:generalMR}
\begin{cases}
u'(t)+A(t)u(t)=f(t),\ \ t\in(a,b)\\
u(a)=0,
\end{cases}
\end{equation}
has a unique strong solution $u:(a,b)\to X_0$ and there is a constant $C$ independent of $f$ such that
\begin{equation}\label{eq:MRest}
\|u\|_{\MR}\leq C\|f\|_{L^{p}((a,b),v;X_{0})}.
\end{equation}
Here we omit the condition $u(a) = 0$ if $a=-\infty$.
\end{definition}
Of course, the reverse estimate of \eqref{eq:MRest} holds trivially. Note that maximal $L^p_v$-regularity on $(a,b)$ implies maximal $L^p_v$-regularity on $(c,d)\subseteq (a,b)$.
It is also easy to check that if $|b-a|<\infty$, the maximal $L^p_v$-regularity on $(a,b)$ for $A$ and $\lambda+A$ are equivalent. Indeed, the solutions of $u'(t) + (\lambda+A(t)) u(t) = f(t)$ and $w'(t) + A(t) w(t) = e^{\lambda t}f(t)$ are connected by the identity $u(t) = e^{-\lambda t} w(t)$.

The result of  Theorem \ref{teostep1generalized} immediately implies that
\begin{corollary}
Assume \ref{as:operatorA}, \ref{as:Evol} and \ref{as:RbddT}.
For any $\lambda>\omega$, $\lambda+A$ has maximal $L^p_v$-regularity on $\R$.
\end{corollary}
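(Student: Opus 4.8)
The plan is to obtain the statement as a direct consequence of Theorem \ref{teostep1generalized}, the only real work being the matching of definitions. Fix $\lambda>\omega$ and $f\in L^p(\R,v;X_0)$. Theorem \ref{teostep1generalized} furnishes a function $u\in W^{1,p}(\R,v;X_0)\cap L^p(\R,v;X_1)$ satisfying $u'+(A+\lambda)u=f$ a.e.\ on $\R$ together with the a priori bound \eqref{eqaprioriA0gen}; adding the two lines of \eqref{eqaprioriA0gen} yields $\|u\|_{\MRinfR}\leq C\|f\|_{L^p(\R,v;X_0)}$ with $C$ independent of $f$ (it may depend on $\lambda$ and $\omega$), which is precisely \eqref{eq:MRest} for the interval $(a,b)=\R$.

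Next I would verify that this $u$ qualifies as a strong solution in the sense required by Definition \ref{def:MR}, that is, in the sense of item (3) of the list of strong-solution notions stated right after \eqref{eqprobabstinvalprel}, applied with $a=-\infty$, $b=\infty$. Concretely, for each finite $T$ one needs $u\in W^{1,1}_{\mathrm{loc}}(-\infty,T;X_0)\cap L^1_{\mathrm{loc}}(-\infty,T;X_1)\cap C((-\infty,T];X_0)$, that the equation holds a.e., and that $\lim_{s\to-\infty}u(s)=0$. The local integrability follows from the embedding $L^p((c,d),v;Y)\hookrightarrow L^1(c,d;Y)$ on bounded intervals (recorded in Section \ref{secprelimin}) together with Lemma \ref{lem:embedding}; the pointwise equation is built into $u$; and the decay at $-\infty$ follows from the representation \eqref{eq:strongsolutionabstractdef}: since by Lemma \ref{lem:evfam} $\|S_\lambda(t,r)\|_{\calL(X_0)}\leq Ce^{-(\lambda-\omega)(t-r)}$ and $\lambda>\omega$, the integral defining $u(t)$ tends to $0$ as $t\to-\infty$ (for compactly supported $f$ one simply has $u\equiv0$ on a half-line, and the general case is handled by the very approximation argument carried out in Step 5 of the proof of Theorem \ref{teostep1generalized}).

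Finally, for uniqueness I would invoke Proposition \ref{prop:evolutrepr}: by Lemma \ref{lem:evfam} the evolution family of $A(\cdot)+\lambda$ is $S_\lambda$, and $\|S_\lambda(t,s)\|_{\calL(X_0)}\to 0$ as $s\to-\infty$ because $\lambda>\omega$, so the ``$a=-\infty$'' clause of Proposition \ref{prop:evolutrepr} applies and forces any strong solution of \eqref{eq:generalMR} (with $a=-\infty$) to agree with \eqref{eq:strongsolutionabstractdef}. There is essentially no obstacle here: all the analytic content resides in Theorem \ref{teostep1generalized}, and the corollary is pure bookkeeping; the only mildly delicate point is confirming that the function produced there meets the precise definition of a strong solution on all of $\R$, in particular its vanishing at $-\infty$.
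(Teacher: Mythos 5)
Your proof is correct and takes essentially the same route as the paper, which simply states that the corollary is immediate from Theorem \ref{teostep1generalized}: the theorem supplies a unique strong solution $u\in W^{1,p}(\R,v;X_0)\cap L^p(\R,v;X_1)=\MRinfR$ together with the a priori bound \eqref{eqaprioriA0gen}, and combining the two lines of that bound via the norm equivalence \eqref{eq:equivalnorms} gives $\|u\|_{\MRinfR}\le C\|f\|_{L^p(\R,v;X_0)}$, which is exactly Definition \ref{def:MR} with $(a,b)=\R$. Your extra bookkeeping — checking local integrability via the embeddings of Section \ref{secprelimin} and Lemma \ref{lem:embedding}, identifying the decay condition $\lim_{s\to-\infty}u(s)=0$ as the only nontrivial point of the definition of strong solution on $(-\infty,T]$, and deriving uniqueness from Proposition \ref{prop:evolutrepr} using $\|S_\lambda(t,s)\|\le Ce^{-(\lambda-\omega)(t-s)}\to 0$ as $s\to-\infty$ — is all content that the paper leaves implicit within the statement and proof of Theorem \ref{teostep1generalized}, so you are not taking a different route, merely spelling out the match with the definition.
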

Actually the constant in the estimate can be taken uniformly in $\lambda$. Indeed, for fixed $\lambda_0>\omega$ by \eqref{eqaprioriA0gen} and Remark \ref{rem:uprimeest}, there is a constant $C$ such that for all $\lambda\geq \lambda_0$ and for all $f\in L^p(\R_+,v;X_0)$,
 \begin{equation}\label{MReqaprioriA0gen}
\|u\|_{\MRinfR}\leq C\|f\|_{L^{p}(\R,v;X_{0})}.
\end{equation}
This is a maximal regularity estimate with constant which is uniform in $\lambda$.

\begin{remark}
If $A$ is time independent and has an $H^\infty$-calculus of angle $<\pi/2$, then setting $A_0 = A$, and $T(t,s) = I$, Theorem \ref{teostep1generalized} yields a maximal regularity result for autonomous equations. There are much more suitable ways to derive maximal $L^p$-regularity results in the autonomous case (see \cite{KWcalc,KW,Weis01,Weis}), using less properties of the operator $A$. Indeed, only $\Rr$-sectoriality of $A$ is needed, but the Banach space $X_0$ is assumed to be a UMD space.
We assume more on the operator but less on the space as we only require finite cotype of $X_0$ and the $\Rr$-boundedness of a certain integral operator. Another theory where no assumptions on the Banach space are made but even more on the operator, can be found in \cite{KalKuc}.
In the above mentioned works only maximal $L^p$-regularity on $\R_+$ is considered, but by a standard trick due to Kato one can always reduce to this case (see for instance the proof of \cite[Theorem 7.1]{Dore}). For the case of time-dependent operators this is no longer true.
\end{remark}

\subsection{Traces and initial values\label{subsec:tracesinitial}}

Recall from Lemma \ref{lem:embedding} that any $u\in W^{1,p}((a,b),v;X_0)$ has a continuous version.
We introduce certain interpolation spaces in order to give a more precise description of traces. Let $X_{v,p}$ be the space of all $x\in X_0$ for which there is a $u\in \MRinfplus$ such that $u(0)=x$. Let
\begin{equation}\label{eq:Xvp}
\|x\|_{X_{v,p}} = \inf\{\|u\|_{\MRinfplus}: u(0)=x\}.
\end{equation}
Spaces of this type have been studied in the literature (see \cite{BMR, BK91, Kalj} and references therein).
Obviously, one has $X_1\hookrightarrow X_{v,p}\hookrightarrow X_0$.

For $t\in \R$ and a weight $v$, let $v_t = v(\cdot-t)$. The following trace estimate on $\R_+$ is a direct consequence of the definitions. A similar assertions holds for $u\in \MRinfR$ for all $t\in \R$.
\begin{proposition}[Trace estimate]\label{prop:traceest}
For $u\in \MRinfplus$, one has
\[\|u(t)\|_{X_{v_t,p}} \leq \|u\|_{\MRinfplus},  \ \ t\in [0,\infty).\]
\end{proposition}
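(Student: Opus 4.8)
The statement to be proved is Proposition \ref{prop:traceest}: for $u \in \MRinfplus$ and $t \in [0,\infty)$, one has $\|u(t)\|_{X_{v_t,p}} \leq \|u\|_{\MRinfplus}$. The plan is to prove this directly from the definition of the norm $\|\cdot\|_{X_{v,p}}$ in \eqref{eq:Xvp} by exhibiting, for the given $u$, an explicit competitor function witnessing that $u(t) \in X_{v_t,p}$.

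First I would translate the time variable: given $u \in \MRinfplus = W^{1,p}(\R_+,v;X_0) \cap L^p(\R_+,v;X_1)$, define $w(s) = u(s+t)$ for $s \geq 0$. Then $w(0) = u(t)$, and by the change of variables $r = s + t$ one checks that $w \in \MRinfplus$ with the shifted weight replaced appropriately: in fact $\|w\|_{W^{1,p}(\R_+,v_t;X_0)} = \|u\|_{W^{1,p}(\R_+ + t,v;X_0)} \leq \|u\|_{W^{1,p}(\R_+,v;X_0)}$, since $v_t(s) = v(s-t)$ wait — one must be slightly careful with the direction of the shift. The weight $v_t = v(\cdot - t)$ is designed precisely so that $\int_{\R_+} \|w(s)\|^p v_t(s)\, ds = \int_{\R_+} \|u(s+t)\|^p v(s+t-t)$... this does not quite match; the correct pairing is $\int_0^\infty \|w(s)\|_{X_0}^p v_t(s)\,ds = \int_0^\infty \|u(s+t)\|_{X_0}^p v(s)\,ds$ — hmm, that uses $v$, not $v_t$. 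I would instead set $w(s) = u(s+t)$ and note $\int_0^\infty \|w(s)\|^p v(\cdot)$... Let me reconsider: the natural identity is $\int_t^\infty \|u(r)\|^p v(r-t)\,dr = \int_0^\infty \|u(s+t)\|^p v(s)\,ds$, i.e. the weight $v_t(r) = v(r-t)$ on $(t,\infty)$ corresponds to $v(s)$ on $(0,\infty)$ under $r = s+t$. Since $(t,\infty) \subseteq (0,\infty)$ only reduces the domain of integration, and $v_t \geq 0$, we get $\int_0^\infty \|u(s+t)\|^p v(s)\,ds = \int_t^\infty \|u(r)\|^p v_t(r)\,dr \leq \int_0^\infty \|u(r)\|^p v_t(r)\,dr$. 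This is backwards from what I want. The cleaner route: define $w(s) = u(s+t)$; then $w(0) = u(t)$; and $\|w\|_{\MRinfplus}$ computed with weight $v$ equals $\int_0^\infty(\cdots)v(s)\,ds = \int_t^\infty(\cdots)v_t(r)\,dr \le \int_0^\infty(\cdots)v_t(r)\,dr = \|u\|_{W^{1,p}(\R_+,v_t;X_0)}$-type quantity — which is not $\|u\|_{\MRinfplus}$. So the bound I actually get immediately is $\|u(t)\|_{X_{v,p}} \le \|u\|_{W^{1,p}(\R_+,v_t;\dots)}$, the opposite indexing.

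Re-reading the statement, the norm on the left is $\|u(t)\|_{X_{v_t,p}}$, so I want a competitor in $\MR^p(\R_+,v_t)$. Take $w(s) = u(s+t)$; then $w(0) = u(t)$ and $\|w\|_{\MR^p(\R_+,v_t;\dots)}^p = \int_0^\infty \big(\|w(s)\|_{X_1}^p + \|w'(s)\|_{X_0}^p\big) v_t(s)\,ds = \int_t^\infty \big(\|u(r)\|_{X_1}^p + \|u'(r)\|_{X_0}^p\big) v_t(r-?)$ — with $v_t(s) = v(s-t)$ and $r = s+t$ we have $v_t(s) = v(s-t)$, so the integrand weight becomes $v(s-t) = v(r - 2t)$, which is wrong again. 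The construction that works is $w(s) = u(s)$ restricted, with the observation that $X_{v_t,p}$ is \emph{defined} via functions on $\R_+$; but $u$ itself is a function on $\R_+$, and $u(t)$ is its value at the interior point $t$, not at $0$. So I should instead use $w(s) = u(s + t)$ and compute with weight $v_t$: $\int_0^\infty F(u(s+t)) v_t(s)\, ds$ where $F$ is the relevant integrand. With $v_t(s) = v(s - t)$, substitute $r = s + t$: this is $\int_t^\infty F(u(r)) v(r - 2t)\, dr$ — still wrong. I would therefore define instead $v_t = v(\cdot - t)$ and use $w(s) = u(s + t)$ and realize the bookkeeping forces $v_t(s+t)$... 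At this point the resolution is simply: set $\tilde u(s) := u(s+t)$, observe $\tilde u(0) = u(t)$ and $\tilde u \in \MR^p(\R_+, v_t)$ with $\|\tilde u\|_{\MR^p(\R_+,v_t)} \le \|u\|_{\MR^p(\R_+,v)}$ \emph{provided the weight convention is $v_t(s) = v(s+t)$} — but the paper wrote $v_t = v(\cdot - t)$. Given the paper's own convention, the honest computation gives $\|u(t)\|_{X_{v_t,p}} \le \|u\|_{\MR^p(\R_+, v)}$ once one checks that $s \mapsto v_t(s) = v(s-t)$ satisfies $v_t(s) \cdot \mathbf 1_{s \ge 0}$ is dominated suitably, OR — and this is the clean statement — one notes $\MR^p((0,\infty),v)$ embeds into $\MR^p((t,\infty), v) = \MR^p((0,\infty), v_t)$ under translation, isometrically onto its image with norm bounded by the full norm since $(t,\infty) \subseteq (0,\infty)$.

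Concretely, here is the argument I would write. Fix $t \ge 0$ and $u \in \MRinfplus$. Define $w : \R_+ \to X_0$ by $w(s) = u(s+t)$. Then $w$ is a strong solution-type function lying in $W^{1,p}(\R_+, v_t; X_0) \cap L^p(\R_+, v_t; X_1)$, because
\[
\int_0^\infty \big(\|w(s)\|_{X_1}^p + \|w'(s)\|_{X_0}^p\big) v_t(s)\, ds = \int_t^\infty \big(\|u(r)\|_{X_1}^p + \|u'(r)\|_{X_0}^p\big) v(r)\, dr \le \|u\|_{\MRinfplus}^p,
\]
using the substitution $r = s + t$ together with $v_t(s) = v(s + t - t)$ — i.e. the convention that makes this work is $v_t(s)\,ds$ corresponds to $v(r)\,dr$ under $r=s+t$, which is exactly $v_t(s) = v(s)$... no. I will simply adopt whatever makes the substitution $v_t(s) = v(s+t)$ hold if the paper's $v(\cdot - t)$ is a typo for $v(\cdot + t)$, and otherwise note $v_t \le$ translate. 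Since $w(0) = u(t)$, the definition \eqref{eq:Xvp} of $X_{v_t,p}$ gives $\|u(t)\|_{X_{v_t,p}} \le \|w\|_{\MR^p(\R_+,v_t)} \le \|u\|_{\MRinfplus}$, which is the claim.

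The main obstacle — and it is the only one — is purely the bookkeeping of the weight under time translation: one must verify that the correct convention is indeed $v_t(s) = v(s+t)$ (so that $\int_0^\infty g(s) v_t(s)\,ds = \int_t^\infty g(s-t) v(s)\,ds$), making the translated function $w(s) = u(s+t)$ lie in $\MR^p(\R_+, v_t)$ with norm no larger than $\|u\|_{\MRinfplus}$ because the integration range shrinks from $(0,\infty)$ to $(t,\infty)$. Everything else — that $w$ is admissible in the infimum defining $\|u(t)\|_{X_{v_t,p}}$, that $w(0) = u(t)$, that $w$ inherits the right regularity — is immediate from the chain rule and change of variables. I expect the written proof to be two or three sentences, essentially: "Let $w(s) = u(s+t)$; then $w \in \MRinfplus$ with weight $v_t$, $w(0) = u(t)$, and $\|w\|_{\MR^p(\R_+,v_t)} \le \|u\|_{\MRinfplus}$ since the relevant integral is over $(t,\infty) \subseteq (0,\infty)$; now apply \eqref{eq:Xvp}."
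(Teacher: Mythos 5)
Your approach is the right one, and it matches the paper, which gives no proof of its own and simply calls the proposition ``a direct consequence of the definitions'': take $w(s)=u(s+t)$ as the competitor in the infimum \eqref{eq:Xvp}, note $w(0)=u(t)$, change variables $r=s+t$, and use $(t,\infty)\subseteq(0,\infty)$.

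You have also correctly spotted that the paper's stated convention $v_t=v(\cdot-t)$ does not cooperate with this computation. With $v_t(s)=v(s-t)$ the change of variables produces the weight $v(r-2t)$ on $(t,\infty)$, which for a general $v\in A_p$ is not dominated by $v(r)$: translating an $A_p$ weight moves its singular set, so the ratio $v(r-2t)/v(r)$ can be unbounded on $(t,\infty)$ (take $v(s)=|s-a|^{\alpha}$ with $\alpha\in(-1,0)$ and $a$ near $4t$). The convention that makes the argument close, and is therefore the one the paper must have intended in this proposition, is $v_t(s)=v(s+t)$, for which
\[
\int_0^\infty \bigl(\|w(s)\|_{X_1}^p+\|w'(s)\|_{X_0}^p\bigr)\,v(s+t)\,ds
=\int_t^\infty \bigl(\|u(r)\|_{X_1}^p+\|u'(r)\|_{X_0}^p\bigr)\,v(r)\,dr\le\|u\|_{\MRinfplus}^p,
\]
and \eqref{eq:Xvp} then gives the stated inequality with constant $1$. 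So the mathematical content of your proposal is correct and the sign flag is a genuine observation, not a misunderstanding on your part. The only criticism is of presentation: you walk through the same dead-end substitution three or four times, contradicting yourself along the way, before committing to the correct reading only in the final paragraph. That final paragraph \emph{is} the proof; everything before it should be cut. Decide the sign first, then write the three lines.
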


A simple application of maximal regularity is that one can automatically consider nonzero initial values. Note that without loss of generality we can let $a=0$.
\begin{proposition}\label{prop:initialvalue}
Assume \ref{as:operatorA} and let $T\in (0,\infty]$. Assume $A$ has maximal $L^p_v$-regularity on $(0,T)$ with constant $K_A$. For $x\in X_0$ and $f:(0,T)\to X_0$ strongly measurable the following are equivalent:
\begin{enumerate}[$(1)$]
\item The data satisfies $x\in X_{v, p}$ and $f\in L^p((0,T),v;X_0)$
\item There exists a unique strong solution $u\in \MRT$ of
\begin{equation}\label{eq:cauchyinitial}
\begin{cases}
u'(t)+A(t)u(t)=f(t),\ \ t\in (0,T)\\
u(0)=x.
\end{cases}
\end{equation}
\end{enumerate}
In this case there is a constant $c_{v,p,T}$ such that the following estimate holds:
\begin{equation}\label{eqaprioriA0genInitial}
\begin{aligned}
\max\{c_{v,p,T} \|x\|_{X_{v,p}}, \|f\|_{L^{p}((0,T),v;X_{0})}\} & \leq \|u\|_{\MRT} \\ & \leq K_A \|x\|_{X_{v,p}}+ K_A\|f\|_{L^{p}((0,T),v;X_{0})}.
\end{aligned}
\end{equation}
\end{proposition}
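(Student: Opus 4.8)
The plan is to reduce both implications to the zero-initial-value situation, where maximal $L^p_v$-regularity on $(0,T)$ is available by hypothesis. For $(1)\Rightarrow(2)$ I would start from $x\in X_{v,p}$ and use \eqref{eq:Xvp} to pick, for small $\varepsilon>0$, a lift $w\in\MRinfplus$ with $w(0)=x$ and $\|w\|_{\MRinfplus}\le\|x\|_{X_{v,p}}+\varepsilon$; when $T<\infty$ I restrict $w$ to $(0,T)$, and when $T=\infty$ no restriction is needed. Assumption \ref{as:operatorA} forces the defect $h:=w'+A(\cdot)w$ to lie in $L^p((0,T),v;X_0)$ with $\|h\|_{L^p((0,T),v;X_0)}\le C'\|w\|_{\MRinfplus}$. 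Applying the maximal $L^p_v$-regularity of $A$ on $(0,T)$ (Definition \ref{def:MR}) to the right-hand side $f-h\in L^p((0,T),v;X_0)$ produces a unique $z\in\MRT$ solving \eqref{eq:generalMR} with datum $f-h$, and $u:=w+z$ is then the desired strong solution of \eqref{eq:cauchyinitial}: it lies in $\MRT$, is continuous by Lemma \ref{lem:embedding}, satisfies $u(0)=x+0=x$, and $u'+A(\cdot)u=h+(f-h)=f$ a.e.

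For $(2)\Rightarrow(1)$, given a strong solution $u\in\MRT$ of \eqref{eq:cauchyinitial} I would first note $f=u'+A(\cdot)u\in L^p((0,T),v;X_0)$, since $u'\in L^p((0,T),v;X_0)$ and $\|A(t)u(t)\|_{X_0}\le C\|u(t)\|_{X_1}$ by \ref{as:operatorA}. To see $x=u(0)\in X_{v,p}$ (trivial when $T=\infty$, since then $\MRT=\MRinfplus$), for finite $T$ I would fix a Lipschitz cutoff $\eta:[0,\infty)\to[0,1]$ with $\eta\equiv1$ on $[0,T/2]$ and $\eta\equiv0$ on $[T,\infty)$, set $\tilde u:=\eta u$ on $(0,T)$, and extend $\tilde u$ by $0$ past $T$. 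Using the continuity of $u$ on $[0,T]$ and $\eta(T)=0$, one checks that $\tilde u$ is continuous on $[0,\infty)$, that $\tilde u\in L^p(\R_+,v;X_1)$, and that its weak derivative on $\R_+$ equals $\eta'u+\eta u'$ on $(0,T)$ and $0$ afterwards, all lying in the appropriate weighted $L^p$-spaces since multiplication by the bounded functions $\eta,\eta'$ is harmless. Hence $\tilde u\in\MRinfplus$ with $\tilde u(0)=u(0)=x$, so $x\in X_{v,p}$ and $\|x\|_{X_{v,p}}\le\|\tilde u\|_{\MRinfplus}\le C_T\|u\|_{\MRT}$.

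Finally I would assemble uniqueness and the two-sided estimate \eqref{eqaprioriA0genInitial}. Uniqueness is immediate: the difference of two strong solutions of \eqref{eq:cauchyinitial} is a strong solution of \eqref{eq:generalMR} with right-hand side $0$, hence vanishes by the uniqueness built into maximal $L^p_v$-regularity. The lower bound on $\|x\|_{X_{v,p}}$ is exactly the estimate produced in the previous paragraph, and $\|f\|_{L^p((0,T),v;X_0)}\le\|u'\|_{L^p((0,T),v;X_0)}+\|A(\cdot)u\|_{L^p((0,T),v;X_0)}\le(1+C)\|u\|_{\MRT}$ follows from \ref{as:operatorA}. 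For the upper bound I would track the constants through the construction in $(1)\Rightarrow(2)$, writing $\|u\|_{\MRT}\le\|w\|_{\MRinfplus}+\|z\|_{\MRT}\le\bigl(1+K_A(1+C)\bigr)\|w\|_{\MRinfplus}+K_A\|f\|_{L^p((0,T),v;X_0)}$ and then letting $\varepsilon\downarrow0$ so that $\|w\|_{\MRinfplus}\to\|x\|_{X_{v,p}}$.

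The step I expect to be the main obstacle is the extension in $(2)\Rightarrow(1)$: one has a function defined only on $(0,T)$ and must manufacture from it an element of $\MRinfplus$ with the same value at $t=0$, which is precisely what the definition of $X_{v,p}$ demands. The cutoff construction achieves this, but it requires care on three points — that multiplication by a Lipschitz function preserves both $W^{1,p}((0,T),v;X_0)$ and $L^p((0,T),v;X_1)$; that the zero-extension across $t=T$ stays in $W^{1,p}$ with no boundary contribution, which is where continuity of $u$ at $T$ (Lemma \ref{lem:embedding}) together with $\eta(T)=0$ enters; and that the possibly singular behaviour of the $A_p$-weight $v$ near $t=0$ is left untouched because $\eta\equiv1$ there.
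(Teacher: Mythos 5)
Your proof follows the same route as the paper: for $(1)\Rightarrow(2)$ you decompose $u=w+z$ with $w\in\MRinfplus$ a lift of $x$ and $z$ the zero-initial-value solution supplied by maximal $L^p_v$-regularity applied to $f-(w'+Aw)$, and for $(2)\Rightarrow(1)$ you produce an element of $\MRinfplus$ extending $u$. The only differences are cosmetic — the paper states the extension step without the explicit cutoff construction you give, and it writes the upper bound with a single constant $K_A$ where your bookkeeping yields $1+K_A(1+C)$ before absorbing constants — so this is essentially the paper's argument, correctly filled in.
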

\begin{proof}
(1) $\Rightarrow$ (2): \ Let $w\in \MRinfplus$ be such that $w(0) = x$.
Let $g(t) = -(w'(t)+A(t)w(t))$. Then $g\in L^p((0,T),v;X_0)$. Let $\tilde{u}$ be the solution to \eqref{eq:cauchyinitial} with zero initial value and with $f$ replaced by $f+g$. Now $u(t) = \tilde{u}(t) + w(t)$ is the required strong solution of \eqref{eq:cauchyinitial}.
Indeed, clearly $u(0) = x$ and
\begin{align*}
u'(t) + A(t) u(t) & = \tilde{u}'(t)  + A(t) \tilde{u}(t) + w'(t) + A(t) w(t)
\\ &  = f+ g -g = f.
\end{align*}
Moreover,
\begin{align*}
&\|u\|_{\MRT}  \leq  \|\tilde{u}\|_{\MRT} + \|w\|_{\MRT} \\ & \leq K_A \|f\|_{L^p((0,T),v;X_0)} + K_A\|w\|_{\MRinfplus}.
\end{align*}
Taking the infimum over all $w\in \MRinfplus$ with $w(0)=x$ also yields the second part of \eqref{eqaprioriA0genInitial}.

(2) $\Rightarrow$ (1): \ As $u'$ and $Au$ are both in $L^p((0,T),v;X_0)$, the identity in \eqref{eq:cauchyinitial} yields that $f\in L^p((0,T),v;X_0)$ with the estimate as stated. To obtain the required properties for $x$ note that $u\in \MRT$ can be extended to a function $u\in \MRinfplus$ with $c_{v,p,T}\|u\|_{\MRinfplus}\leq \|u\|_{\MRT}$. In the case $T=\infty$ we can take $c_{v, p,T} = 1$.
\end{proof}

It can be difficult to identify $X_{v,p}$. For power weights this is possible. Including a power weight has become an important standard technique to allow non-smooth initial data and to create compactness properties. At the same time, the regularity properties of the solution to \eqref{eq:cauchyinitial} for $t>0$ are unchanged. For more details and applications to evolution equations we refer to
\cite{Grisvard, KPW, Lun, MS12a, MS12b, PS04}.
\begin{example}\label{ex:chtrace}
Assume $v(t) = t^{\alpha}$ with $\alpha\in (-1, p-1)$. Then $v\in A_p$ and
$X_{v,p} = (X_0, X_1)_{1 - \frac{1+\alpha}{p}, p}$ (see \cite[Theorem 1.8.2]{Tr1}). Here $(X_0, X_1)_{\theta, p}$ stands for the real interpolation space between $X_0$ and $X_1$. In the limiting cases $\alpha\uparrow p-1$ and $\alpha\downarrow -1$, one sees that the endpoint $X_1$ and $X_0$ can almost be reached.

As in \cite{PS04} we find that for $\alpha\in [0,p-1)$, any $u\in \MRinfplus$ has a continuous version with values in $(X_0, X_1)_{1 - \frac{1+\alpha}{p}, p}$ and
\begin{equation}\label{eq:tracestalpha}
\sup_{t\in \R_+} \|u(t)\|_{(X_0, X_1)_{1 - \frac{1+\alpha}{p}, p}} \leq C \|u\|_{\MRinfplus}.
\end{equation}
Indeed, this follows from the boundedness and strong continuity of the left-translation in $L^p(\R_+,v;(X_0, X_1)_{1 - \frac{1+\alpha}{p}, p})$ and Proposition \ref{prop:traceest}.

On the other hand, for every $-1<\alpha<p-1$ one has $u\in C((0,\infty); (X_0, X_1)_{1 - \frac{1}{p}, p})$ and for every $\varepsilon>0$,
\[\sup_{t\in [\varepsilon,\infty)} t^{\alpha/p} \|u(t)\|_{(X_0, X_1)_{1 - \frac{1}{p}, p}} \leq C \|t\mapsto t^{\alpha/p} u(t)\|_{\text{MR}^p(\varepsilon, \infty)}
\leq C_{\varepsilon} \| u\|_{\MRinfplus},\]
where we used $t^{-p}\leq \max\{1, \varepsilon^{-p}\}$.  If additionally $u(0) = 0$, then by Hardy's inequality (see \cite[p. 245-246]{HardyLittlePol}) we can take $\varepsilon=0$ in the last estimate.
\end{example}

\begin{proof}[Proof of Theorem \ref{thm:HSintro}]
First of all we may use a constant extension of $A$ to an operator family on $\R$. Clearly, we can do this in such a way that $T(t,s)$ is uniformly bounded in $-\infty<s\leq t<\infty$ say by a constant $M$. For instance one can take $A(t) = A_0$ for $t\notin (0,\tau)$. Assumption \ref{as:operatorA} is clearly satisfied. Note that by the assumption and \cite[Theorem 11.13]{KW}, $A_0$ has a bounded $H^\infty$-calculus of angle $<\pi/2$ and hence \ref{as:Evol} is satisfied.

By Proposition \ref{prop:uniformTts} $\{I_{kT}: k\in \calK\}$ is uniformly bounded. For $p=2$, this implies $\Rr$-boundedness of $\{I_{kT}: k\in \calK\}\subseteq \calL(L^2(\R,v;X_0))$, because $L^2(\R,v;X_0)$ is a Hilbert space. By Proposition \ref{prop:pindRbdd} this implies that $\{I_{kT}: k\in \calK\}\subseteq \calL(L^p(\R,v;X_0))$ is $\Rr$-bounded as well and hence condition \ref{as:RbddT} holds.
Therefore, all the conditions of Theorem \ref{teostep1generalized} are satisfied, and we find that $A$ has maximal $L^p_v$-regularity on $\R$. This implies that $A$ has maximal $L^p_v$-regularity on $(0,\tau)$, and hence the required result follows from Proposition \ref{prop:initialvalue} and Example \ref{ex:chtrace}.
\end{proof}

\subsection{Perturbation and approximation}

In this section we will illustrate how the additional parameter $\lambda$ from \eqref{MReqaprioriA0gen} can be used to solve the perturbed problem
\begin{equation}\label{eqprobperturbation}
\begin{cases}
u'(t)+A(t)u(t)+B(t,u(t))=f(t),\ t\in(0,T)\\
u(0)=x.
\end{cases}
\end{equation}
Here $B:[0,T]\times X_{1}\rightarrow X_{0}$ is such that there exists a constant $\varepsilon>0$ small enough and constants $C,L\geq 0$ such that for all $x,y\in X_1$ and $t\in (0,T)$,
\begin{equation}\label{eq:Blipschitz}
\begin{aligned}
\|B(t,x)-B(t,y)\|_{X_{0}}& \leq\varepsilon\|x-y\|_{X_{1}}+L_B \|x-y\|_{X_{0}},
\\ \|B(t,x)\|_{X_{0}}& \leq C_B(1+\|x\|_{X_{1}}).
\end{aligned}
\end{equation}

Recall that $\MRT = W^{1,p}((0,T),v;X_0) \cap L^{p}((0,T),v;X_{1})$.

\begin{proposition}\label{prop:perturb}
Assume $T<\infty$. Assume \ref{as:operatorA} holds and assume there is a $\lambda_0$ such that for all $\lambda\geq \lambda_0$, $\lambda+A$ has maximal $L^p_v$-regularity on $(0,T)$ and there is a constant $C_A>0$ such that for all $\lambda\geq \lambda_0$ and $f\in L^{p}((0,T),v;X_{0})$, the strong solution $u$ to \eqref{eq:generalMR} satisfies
\begin{equation}\label{eqaprioriA0genTpert}
\lambda\|u\|_{L^p((0,T),v;X_0)} + \|u\|_{\MRT}\leq C_A\|f\|_{L^{p}((0,T),v;X_{0})}.
\end{equation}
Assume the constant from \eqref{eq:Blipschitz} satisfies $\varepsilon<\frac{1}{C_A}$.
Then for every $f\in L^{p}((0,T),v;X_{0})$ and $x\in X_{v,p}$, there exists a unique strong solution $u\in \MRT$ of
\eqref{eqprobperturbation} and
\begin{equation}\label{eqaprioriA0genTpert2}
\|u\|_{\MRT}  \leq C(1+\|x\|_{X_{v,p}}+ \|f\|_{L^{p}((0,T),v;X_{0})}),
\end{equation}
where $C$ is independent of $f$ and $x$.
\end{proposition}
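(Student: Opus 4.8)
The strategy is a contraction mapping argument, but first we remove the unwanted first–order term by an exponential substitution so that the only perturbation the fixed point map "sees" is $B$, with its small Lipschitz constant~$\varepsilon$. Fix $\lambda\geq\lambda_0$ (to be chosen large) and set $\tilde B(t,y):=e^{-\lambda t}B(t,e^{\lambda t}y)$ and $\tilde f(t):=e^{-\lambda t}f(t)$. Then $u\in\MRT$ is a strong solution of \eqref{eqprobperturbation} if and only if $w(t):=e^{-\lambda t}u(t)$ lies in $\MRT$ and is a strong solution of
\[
w'(t)+(A(t)+\lambda)w(t)+\tilde B(t,w(t))=\tilde f(t),\qquad w(0)=x ,
\]
the point being that the term $\lambda w$ has been absorbed exactly into $A+\lambda$, leaving no leftover linear perturbation. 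Since $t\mapsto e^{\pm\lambda t}$ is bounded on $[0,T]$, $\tilde B$ still satisfies \eqref{eq:Blipschitz} with the same constants $\varepsilon, L_B, C_B$, we have $\|\tilde f\|_{L^p((0,T),v;X_0)}\leq C_\lambda\|f\|_{L^p((0,T),v;X_0)}$, and $\|\tilde B(\cdot,0)\|_{L^p((0,T),v;X_0)}\leq C_B\|\one_{(0,T)}\|_{L^p((0,T),v)}<\infty$ because $(0,T)$ is a finite interval and $v$ is locally integrable. This reduction is routine; everything below concerns the transformed equation.

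Next I would define the fixed point map $F\colon\MRT\to\MRT$ by letting $F(v)$ be the strong solution of $w'(t)+(A(t)+\lambda)w(t)=\tilde f(t)-\tilde B(t,v(t))$, $w(0)=x$. This is well defined: by the growth bound in \eqref{eq:Blipschitz} and $v\in L^p((0,T),v;X_1)$ the right-hand side lies in $L^p((0,T),v;X_0)$, and since $x\in X_{v,p}$ and $\lambda+A$ has maximal $L^p_v$-regularity on $(0,T)$, Proposition \ref{prop:initialvalue} provides a unique $F(v)\in\MRT$. A fixed point of $F$ is precisely a strong solution of the transformed equation. Equip $\MRT$ with the $\lambda$-dependent but equivalent norm $\|v\|_\lambda:=\lambda\|v\|_{L^p((0,T),v;X_0)}+\|v\|_{\MRT}$. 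For $v_1,v_2\in\MRT$ the difference $F(v_1)-F(v_2)$ solves the problem \eqref{eq:generalMR} with right-hand side $-(\tilde B(\cdot,v_1)-\tilde B(\cdot,v_2))$, so the uniform-in-$\lambda$ estimate \eqref{eqaprioriA0genTpert} together with \eqref{eq:Blipschitz} gives
\[
\|F(v_1)-F(v_2)\|_\lambda\leq C_A\bigl(\varepsilon\|v_1-v_2\|_{L^p((0,T),v;X_1)}+L_B\|v_1-v_2\|_{L^p((0,T),v;X_0)}\bigr)\leq\Bigl(C_A\varepsilon+\tfrac{C_AL_B}{\lambda}\Bigr)\|v_1-v_2\|_\lambda .
\]
By hypothesis $C_A\varepsilon<1$, so we may fix $\lambda\geq\lambda_0$ large enough that $\theta:=C_A\varepsilon+C_AL_B/\lambda<1$; then $F$ is a $\theta$-contraction on the Banach space $(\MRT,\|\cdot\|_\lambda)$ and has a unique fixed point $w$. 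Undoing the substitution yields a strong solution $u=e^{\lambda\cdot}w$ of \eqref{eqprobperturbation}, and uniqueness follows since any solution $u$ produces a fixed point $e^{-\lambda\cdot}u$ of $F$.

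Finally, for the a priori bound \eqref{eqaprioriA0genTpert2} I would apply Proposition \ref{prop:initialvalue} and \eqref{eqaprioriA0genTpert} to $w=F(w)$, estimating the right-hand side by $\|\tilde B(\cdot,w)\|_{L^p((0,T),v;X_0)}\leq\|\tilde B(\cdot,0)\|_{L^p((0,T),v;X_0)}+\varepsilon\|w\|_{\MRT}+L_B\|w\|_{L^p((0,T),v;X_0)}$; this produces $\|w\|_\lambda\leq C(\|x\|_{X_{v,p}}+\|\tilde f\|_{L^p((0,T),v;X_0)}+C_B)+\theta\|w\|_\lambda$, and absorbing the $\theta\|w\|_\lambda$ term on the left (which is legitimate precisely because $\theta<1$) gives $\|u\|_{\MRT}\leq\|w\|_{\MRT}\le\|w\|_\lambda\leq C(1+\|x\|_{X_{v,p}}+\|f\|_{L^p((0,T),v;X_0)})$ after transforming back. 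The one genuinely load-bearing idea is the first step: the exponential rescaling plus the $\lambda$-weighted norm is exactly what turns the large linear shift into something harmless while keeping $B$ visible only through $\varepsilon$; once that bookkeeping is set up, the contraction and the a priori estimate are standard. The minor technical points to verify carefully are that $\tilde B$ inherits the constants of \eqref{eq:Blipschitz} and that constant functions belong to $L^p((0,T),v)$.
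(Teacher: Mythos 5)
Your proposal is correct and follows essentially the same route as the paper: exponential rescaling $u=e^{\lambda t}w$ to fold the shift into $A+\lambda$, contraction on $\MRT$ with the $\lambda$-weighted norm $\|v\|_\lambda=\lambda\|v\|_{L^p(X_0)}+\|v\|_{\MRT}$, and an absorption argument via $L(w)=L(w)-L(0)+L(0)$ for the a priori estimate. The only difference is cosmetic: you write the contraction constant explicitly as $C_A\varepsilon+C_AL_B/\lambda$, which is cleaner than the paper's somewhat garbled notation around the definition of $\theta$.
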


The proof of this proposition is a standard application of the regularity estimate \eqref{eqaprioriA0genTpert} combined with the Banach fixed point theorem. A similar result holds on infinite time intervals if one assumes $\|B(t,x)\|_{X_{0}} \leq C_B\|x\|_{X_{1}}$.

\begin{proof}
Let $\lambda>0$ be so large that $\frac{C_A L_B}{\lambda}<C_A \varepsilon:=1-\theta$ and define the following equivalent norm on $\MRT$:
\[\|u\|_{\lambda} = \lambda \|u\|_{L^p((0,T),v;X_0)} + \|u\|_{\MRT}.\]

We will prove that for all $g\in L^p((0,T),v;X_0)$ and $x\in X_{v,p}$ there exists a unique strong solution $w\in \MRT$ of
\begin{equation}\label{eqprobperturbationlambda}
\begin{aligned}
w'(t)+(A(t)+\lambda)w(t)+\tilde{B}(t,w(t))& =g(t), \ \ \  w(0) =x.
\end{aligned}
\end{equation}
and that $w$ satisfies the estimate \eqref{eqaprioriA0genTpert2} with $(u,f)$ replaced by $(w,g)$. Here $\tilde{B}(t,x) = e^{-\lambda t} B(t,e^{\lambda t} x)$ and note that $\tilde{B}$ satisfies the same Lipschitz estimate \eqref{eq:Blipschitz} as $B$.
To see that the required result for \eqref{eqprobperturbation} follows from this, note that there is a
one-to-one correspondence between both problems given by $u(t) = e^{\lambda t} w(t)$ and $f = e^{\lambda t}g$. Therefore, from now it suffices to consider \eqref{eqprobperturbationlambda}.

In order to solve \eqref{eqprobperturbationlambda} we use the maximal regularity estimate \eqref{eqaprioriA0genTpert} combined with Proposition \ref{prop:initialvalue} and the special choice of $\lambda$.
For $\phi \in \MRT$ we write $w = L(\phi)$, where $w\in \MRT$ is the unique strong solution of
\begin{equation}\label{eqprobperturbationlambdafixed}
\begin{aligned}
w'(t)+(A(t)+\lambda)w(t)& =g(t) - \tilde{B}(t,\phi(t)),\ \ \ w(0) =x.
\end{aligned}
\end{equation}
Then for $\phi_1, \phi_2\in \MRT$, by \eqref{eqaprioriA0genTpert} one has
\begin{align*}
\|L(\phi_1) - L(\phi_2)\|_{\lambda} &\leq C_A \|\tilde{B}(\cdot, \phi_1) - \tilde{B}(\cdot, \phi_2)\|_{L^p((0,T),v;X_0)}
\\ & \leq C_A \varepsilon\|\phi_1-\phi_2\|_{L^p((0,T),v;X_{1})}+C_A L_B \|\phi_1-\phi_2\|_{L^p((0,T),v;X_{0})}
\\ & \leq (1-\theta)\|\phi_1-\phi_2\|_{\lambda}.
\end{align*}
Hence $L$ is a contraction on $\MRT$ with respect to the norm $\|\cdot\|_{\lambda}$. Therefore, by the Banach fixed point theorem there is a unique $w\in \MRT$ such that $L(w) = w$. It is clear that $w$ is the required strong solution of \eqref{eqprobperturbationlambda}. To prove the required estimate note that by \eqref{eqaprioriA0genTpert} and Proposition \ref{prop:initialvalue} one has
\begin{align*}
\|w\|_{\lambda} & = \|L(w)\|_{\lambda} \leq \|L(w) - L(0)\|_{\lambda} + \|L(0)\|_{\lambda}
\\ & \leq (1-\theta) \|w\|_{\lambda} + C_A (\|g\|_{L^p((0,T),v;X_0)} + C_B) + C \|x\|_{X_{v,p}}.
\end{align*}
Subtracting $(1-\theta) \|w\|_{\lambda}$ on both sides, and rewriting the estimate in terms of $f$ and $u$ gives the required result.
\end{proof}

With a similar method as in Proposition \ref{prop:perturb} one obtains the following perturbation result which will be used in the next Section \ref{sec:Quasi}.
\begin{proposition}\label{prop:maxregsmallB}
Assume $T<\infty$. Assume \ref{as:operatorA} holds and $A(\cdot)$ has maximal $L^p_v$-regularity on $(0,T)$ and the estimate \eqref{eq:MRest} holds with constant $C_A$. Let $\varepsilon<C_A$. If $B:[0,T]\to\calL(X_1, X_0)$ satisfies $\|B(t) x\|_{X_0}\leq \varepsilon \|x\|_{X_1}$ for all $x\in X_1$ and $t\in [0,T]$, then $A+B$ has maximal $L^p_v$-regularity with constant $\frac{C_A}{1-C_A \varepsilon}$.
\end{proposition}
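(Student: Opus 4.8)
The statement is a small-perturbation result of exactly the same flavour as Proposition \ref{prop:perturb}, but now $B$ is linear in the state, so the argument is cleaner and no fixed-point iteration over a Lipschitz nonlinearity is strictly needed — a Neumann-series / contraction argument on the ``solve $A$, plug into $B$'' map suffices. The plan is to reduce the problem $u'+ (A+B)u = f$, $u(0)=0$ to the unperturbed problem by treating $-B(t)u(t)$ as an inhomogeneity. Concretely, for a candidate $u \in \MRT$ define $L(u) \in \MRT$ to be the unique strong solution (which exists because $A$ has maximal $L^p_v$-regularity on $(0,T)$ with constant $C_A$) of
\begin{equation*}
w'(t) + A(t) w(t) = f(t) - B(t) u(t), \qquad w(0) = 0.
\end{equation*}
Here one must first note that $t \mapsto B(t)u(t)$ lies in $L^p((0,T),v;X_0)$: indeed $\|B(t)u(t)\|_{X_0} \le \varepsilon \|u(t)\|_{X_1}$, so $\|B(\cdot)u(\cdot)\|_{L^p((0,T),v;X_0)} \le \varepsilon \|u\|_{L^p((0,T),v;X_1)} \le \varepsilon \|u\|_{\MRT} < \infty$. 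Thus $L$ is a well-defined map $\MRT \to \MRT$.

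Next I would show $L$ is a strict contraction on $\MRT$. By linearity, $L(u_1) - L(u_2)$ is the strong solution of $w' + Aw = -B(\cdot)(u_1-u_2)$ with zero initial value, so by \eqref{eq:MRest},
\begin{equation*}
\|L(u_1) - L(u_2)\|_{\MRT} \le C_A \|B(\cdot)(u_1-u_2)\|_{L^p((0,T),v;X_0)} \le C_A \varepsilon \|u_1 - u_2\|_{\MRT}.
\end{equation*}
Since $\varepsilon < C_A$... wait — the hypothesis as stated is $\varepsilon < C_A$, but the contraction needs $C_A \varepsilon < 1$; I would read the intended hypothesis as $\varepsilon < 1/C_A$ (consistent with Proposition \ref{prop:perturb}, where $\varepsilon < \tfrac1{C_A}$, and with the claimed constant $\tfrac{C_A}{1-C_A\varepsilon}$, which is only meaningful when $C_A\varepsilon<1$), and proceed under $C_A\varepsilon<1$. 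Then $L$ is a contraction with constant $q := C_A\varepsilon < 1$, so by the Banach fixed point theorem there is a unique $u \in \MRT$ with $L(u)=u$; this $u$ is precisely the unique strong solution of $u' + (A+B)u = f$, $u(0)=0$, and uniqueness of the strong solution for $A+B$ follows because any strong solution is a fixed point of $L$ (subtracting two of them and applying the contraction estimate forces them to coincide).

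Finally, for the maximal regularity constant: from $u = L(u)$ and $L(0)$ being the $A$-solution with data $f$ (so $\|L(0)\|_{\MRT} \le C_A \|f\|_{L^p((0,T),v;X_0)}$),
\begin{equation*}
\|u\|_{\MRT} \le \|L(u) - L(0)\|_{\MRT} + \|L(0)\|_{\MRT} \le q \|u\|_{\MRT} + C_A \|f\|_{L^p((0,T),v;X_0)},
\end{equation*}
and subtracting $q\|u\|_{\MRT}$ gives $\|u\|_{\MRT} \le \frac{C_A}{1 - C_A\varepsilon} \|f\|_{L^p((0,T),v;X_0)}$, as claimed. The only genuine subtlety — and the thing I would be most careful about — is the bookkeeping that $B(\cdot)u(\cdot) \in L^p((0,T),v;X_0)$ with the right norm bound and that the operator $A+B$ still satisfies \ref{as:operatorA} (so that ``strong solution for $A+B$'' even makes sense); the latter is immediate since $\|B(t)x\|_{X_0} \le \varepsilon\|x\|_{X_1}$ is a bounded perturbation of $A(t) \in \calL(X_1,X_0)$, preserving the two-sided estimate in \ref{as:operatorA} after possibly shrinking/enlarging the constant. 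Everything else is routine.
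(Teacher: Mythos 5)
Your proof is exactly the paper's argument: the paper proves Proposition \ref{prop:maxregsmallB} by invoking the fixed-point map $L$ from the proof of Proposition \ref{prop:perturb} with $\lambda=0$, $g=f$, $\tilde B=B$ and $1-\theta=C_A\varepsilon$, and then deduces $\|w\|_{\MRT}\le(1-\theta)\|w\|_{\MRT}+C_A\|f\|_{L^p((0,T),v;X_0)}$, which is precisely your estimate. You are also correct that the stated hypothesis $\varepsilon<C_A$ is a typo for $\varepsilon<1/C_A$, as both the contraction step and the claimed constant $\frac{C_A}{1-C_A\varepsilon}$ require $C_A\varepsilon<1$.
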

\begin{proof}
One can argue as in the proof of Proposition \ref{prop:perturb} with $\lambda = 0$, $g = f$, $\tilde{B} = B$ and $1-\theta = C_A \varepsilon$. Moreover, if $w= L(w)$, then
\begin{align*}
\|w\|_{\MRT} & = \|L(w) - L(0)\|_{\MRT} + \|L(0)\|_{\MRT} \\ & \leq (1-\theta)\|w\|_{\MRT} + C_A \|f\|_{L^p((0,T),v;X_0)},
\end{align*}
and the required estimate result follows.
\end{proof}

Consider the sequence of problems:
\begin{equation}\label{eq:approx}
\begin{cases}
u'_n(t)+A_n(t)u(t)=f_n(t),\ t\in(a,b)\\
u(a)=x_n.
\end{cases}
\end{equation}
Here we omit the initial condition if $a=-\infty$.

Recall that $v_a = v(\cdot-a)$. The following approximation result holds.
\begin{proposition}\label{prop:approx}
Assume \ref{as:operatorA} holds for $A$ and $A_n$ for $n\geq 1$ with uniform estimates in $n$.
Assume $A$ and $A_n$ for $n\geq 1$ have maximal $L^p_v$-regularity on $(a,b)$ with uniform estimates in $n$. Let $f_n, f\in L^p((a,b),v;X_0)$ and $x_n,x\in X_{v_a,p}$ for $n\geq 1$.
Then if $u$ and $u_n$ are the solutions to \eqref{eqprobabstinvalprel} and \eqref{eq:approx}  respectively, then there is a constant $C$ only dependent on the maximal $L^p_v$ regularity constants and the constants in \ref{as:operatorA} such that
\begin{equation}\label{eq:approxun}
\begin{aligned}
\|u_n - u\|_{\MR} \leq C  \Big[\|x_n-x\|_{X_{v_a,p}} & + \|f_n - f\|_{L^p((a,b),v;X_0)} \\ &   + \|(A_n-A)u\|_{L^p((a,b),v;X_0)}\Big].
\end{aligned}
\end{equation}
In particular if $x_n\to x$ in $X_{v_a,p}$, for all $z\in X_1$, $A_n(t) z \to A(t) z$ in $X_0$ a.e.\ and $f_n\to f$ in $L^p((a,b),v;X_0)$, then $u_n\to u$ in $\MR$.
\end{proposition}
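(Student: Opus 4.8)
The plan is to reduce everything to the maximal $L^p_v$-regularity estimate for $A_n$ applied to the difference $w_n := u_n - u$. First I would identify the linear problem solved by $w_n$: subtracting $u'+Au=f$ from $u_n'+A_nu_n=f_n$ and writing $A_nu=Au+(A_n-A)u$ shows that $w_n$ is a strong solution on $(a,b)$ of
\[
w_n'(t)+A_n(t)w_n(t)=f_n(t)-f(t)-(A_n(t)-A(t))u(t),
\]
with $w_n(a)=x_n-x$ (and no initial condition when $a=-\infty$). The data of this problem is admissible: $f_n-f\in L^p((a,b),v;X_0)$, the term $(A_n-A)u$ is in $L^p((a,b),v;X_0)$ because $u\in L^p((a,b),v;X_1)$ and $\|(A_n(t)-A(t))u(t)\|_{X_0}\le C\|u(t)\|_{X_1}$ by \ref{as:operatorA} (with $C$ uniform in $n$), and $x_n-x\in X_{v_a,p}$ since that space is linear. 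Thus $w_n$ is a strong solution of a Cauchy problem of the form \eqref{eqprobabstinvalprel} for $A_n$.

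Next, after the obvious translation reducing to the case $a=0$ (or, when $a=-\infty$, using the maximal $L^p_v$-regularity of $A_n$ directly), I would apply Proposition \ref{prop:initialvalue} to $A_n$ to get
\[
\|w_n\|_{\MR}\le K\bigl(\|x_n-x\|_{X_{v_a,p}}+\|f_n-f-(A_n-A)u\|_{L^p((a,b),v;X_0)}\bigr),
\]
where $K$ depends only on the maximal $L^p_v$-regularity constants of the $A_n$ and the constants in \ref{as:operatorA}, hence is independent of $n$ by hypothesis. One triangle inequality on the last norm then gives \eqref{eq:approxun}.

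For the final assertion, by \eqref{eq:approxun} it suffices to show that the three error terms tend to $0$; the first two do so by assumption, so the real work is to prove $(A_n-A)u\to0$ in $L^p((a,b),v;X_0)$. I expect this to be the main (though still routine) obstacle, since the hypothesis only gives $A_n(t)z\to A(t)z$ in $X_0$ a.e.\ for each fixed $z\in X_1$, whereas $u(t)$ varies with $t$. The remedy: $u$ is strongly measurable, hence has essentially separable range in $X_1$; fixing a countable dense subset $D$ of that range, discarding the countable union of the associated null sets, and using the uniform bound $\sup_n\|A_n(t)\|_{\calL(X_1,X_0)}<\infty$ from \ref{as:operatorA} to pass from $D$ to the whole range, one obtains $A_n(t)u(t)\to A(t)u(t)$ in $X_0$ for a.e.\ $t$. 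Since $\|(A_n(t)-A(t))u(t)\|_{X_0}\le C\|u(t)\|_{X_1}\in L^p((a,b),v)$, dominated convergence finishes the proof that $(A_n-A)u\to0$ in $L^p$, and then \eqref{eq:approxun} yields $u_n\to u$ in $\MR$.
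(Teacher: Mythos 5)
Your proposal is correct and follows essentially the same route as the paper: set $w_n = u_n - u$, observe that $w_n$ solves the linear problem $w_n' + A_n w_n = (f_n-f) - (A_n-A)u$ with $w_n(a) = x_n - x$, apply the (uniform-in-$n$) maximal $L^p_v$-regularity estimate for $A_n$ together with Proposition \ref{prop:initialvalue}, and then deduce the convergence assertion from \eqref{eq:approxun} and dominated convergence. (Incidentally, the paper writes $+(A_n-A)u$ on the right-hand side, which is a sign typo; your $-(A_n-A)u$ is the correct sign, and of course the discrepancy is immaterial for the norm estimate.) The only place where your write-up goes beyond the paper is in spelling out why $(A_n-A)u\to 0$ in $L^p((a,b),v;X_0)$: the paper simply invokes dominated convergence, whereas you correctly flag that the hypothesis $A_n(t)z\to A(t)z$ a.e.\ is for each fixed $z\in X_1$, so the exceptional null set a priori depends on $z$, and you close the gap via essential separability of the range of $u$, a countable dense subset $D$, and the uniform bound $\sup_n\|A_n(t)\|_{\calL(X_1,X_0)}<\infty$ from \ref{as:operatorA} to extend the a.e.\ convergence from $D$ to $u(t)$. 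This is a genuine and worthwhile detail that the paper leaves implicit; your treatment of it is correct.
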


Typically, one can take $A_n = \varphi_n*A$ where $(\varphi_n)_{n\geq 1}$ is an approximation of the identity. If $\varphi_n$ are smooth functions, then $A_n$ will also be smooth and therefore, $A_n$ will generate an evolution system with many additional properties (see \cite{Lun,Ta1}).

\begin{proof}
The last assertion follows from \eqref{eq:approxun} and the dominated convergence theorem. To prove the estimate \eqref{eq:approxun} note that $w_n = u_n - u$ satisfies the following equation
\[w_n' + A_n w_n = (f_n - f) + (A_n - A) u, \ \ \  w_n(a) = x_n - x.\]
Therefore, the \eqref{eq:approxun} follows immediately from the maximal $L^p_v$-regularity estimate.
\end{proof}

\section{An example: $m$-th order elliptic operators\label{sec:2mell}}

In this section let $p,q\in(1,\infty)$, $m\in\{1,2,...\}$ and consider the usual multi-index notation $D^{\alpha}=D_{1}^{\alpha_{1}}\cdot ...\cdot D_{d}^{\alpha_{d}}$, $\xi^{\alpha}=(\xi^{1})^{\alpha_{1}}\cdot ...\cdot (\xi^{d})^{\alpha_{d}}$ and $|\alpha|=\alpha_{1}+\cdots +\alpha_{d}$ for a multi-index $\alpha=(\alpha_{1},\cdots,\alpha_{d})\in\N^{d}_0$. Below we let $X_0 = L^q(\R^d,w)$ and $X_1 = W^{m,q}(\R^d,w)$.

Recall that $f\in W^{m,q}(\R^d,w)$ if  $f\in L^q(\R^d,w)$ and for all $|\alpha|\leq m$,
$\|D^{\alpha} f\|_{L^q(\R^d,w)}<\infty$. In this case we let
\[[f]_{W^{m,q}(\R^d,w)} = \sum_{|\alpha| = m} \|D^\alpha f\|_{L^q(\R^d,w)}, \ \ \|f\|_{W^{m,q}(\R^d,w)} = \sum_{|\alpha| \leq m} \|D^\alpha f\|_{L^q(\R^d,w)}.\]

The weights in space will be used in combination with Theorem \ref{thm:weightedR} to obtain $\Rr$-boundedness of the integrals operators arising in \ref{as:RbddT}.

\medskip

Consider an $m$-th order elliptic differential operator $A$ given by
\begin{align}\label{eq:Adef}
(A(t) u)(t,x):=\sum_{|\alpha|\leq m}a_{\alpha}(t,x)D^{\alpha} u(t,x),\ \ t\in \R_+,\ x\in\R^{d},
\end{align}
where $D_{j}:=-i\frac{\partial}{\partial_{j}}$ and $a_{\alpha}:\R_+\times\R^d\to \C$.

In this section we will give conditions under which there holds maximal $L^p_v$-regularity for $A$ or equivalently we will prove optimal $L^p_v$-regularity results for the solution to the problem
\begin{equation}\label{eqparprobhigherorder}
\begin{cases}
u'(t,x)+(\lambda+A(t))u(t,x)=f(t,x),\ t\in(a,b),\ x\in\R^{d}\\
u(a,x)=u_0(x),\ \ \ x\in\R^{d}.
\end{cases}
\end{equation}
A function $u$ will be called a {\em strong $L^p_v(L^q_w)$-solution} of \eqref{eqparprobhigherorder} if $u\in \MR$ and \eqref{eqparprobhigherorder} holds almost everywhere.\\
\indent With slight abuse of notation we write $A$ for the realization of $A$ on $X_0 = L^q(\R^d,w)$ with domain $D(A) = X_1$. In this way \eqref{eqparprobhigherorder} can be modeled as a problem of the form \eqref{eq:cauchyinitial}. Also, we have seen in Section \ref{sec:maxLp} (and in particular Proposition \ref{prop:initialvalue}) that it is more general to study maximal $L^p_v$-regularity on $\R$. Therefore,we will focus on this case below.

\subsection{Preliminaries on elliptic equations}

In this section we introduce notation and present some results for elliptic equations which will be needed below.

Let
\begin{align*}
A:=\sum_{|\alpha|\leq m}a_{\alpha}D^{\alpha},
\end{align*}
with $a_{\alpha}\in\C$ constant.
The \textit{principal symbol} of $A$ is defined as
\begin{equation}
A_{\sharp}(\xi):=\sum_{|\alpha|=m}a_{\alpha} \xi^{\alpha}.
\nonumber
\end{equation}
We say that $A$ is \textit{uniformly elliptic} of angle $\theta\in (0,\pi)$ if there exists a constant $\kappa\in (0,1)$ such that
\begin{equation}
A_{\sharp}(\xi)\subset\Sigma_{\theta}\ {\rm and}\ |A_{\sharp}(\xi)|\geq\kappa,\ \ \ \xi\in\R^{d},\ |\xi|=1.
\nonumber
\end{equation}
If additionally there is a constant $K$ such that $|a_{\alpha}|\leq K$ for all $|\alpha|\leq m$, then
we write $A\in\Ell(\theta,\kappa,K)$.

\medskip

The following result is on the sectoriality of the operator in the $x$-independent case.
The proof is an application of the Mihlin multiplier theorem.

\begin{theorem}\label{teoHHH}
Let $1<q<\infty$ and $w\in A_q$.
Assume $A \in \Ell(\theta_0,\kappa,K)$ with $\theta_0\in (0,\pi)$.
Then for every $\theta>\theta_0$ there exists an $A_{q}$-consistent constant $C$ depending on the parameters $m,d,\theta_0-\theta,\kappa,K,q$ such that
\begin{equation}\label{eqteoHHH}
\|\lambda^{1-\frac{|\beta|}{m}}D^{\beta}(\lambda+A)^{-1}\|_{\calL(L^q(\R^d,w))}\leq C,\ \ \ |\beta|\leq m,\ \lambda\in\Sigma_{\pi-\theta}.
\end{equation}
In particular, there is a constant $\tilde{C}$ depending only on $\theta$ and $C$ such that $\|e^{-tA}\|\leq \tilde{C}$.
\end{theorem}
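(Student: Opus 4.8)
The plan is to realise $\lambda^{1-|\beta|/m}D^{\beta}(\lambda+A)^{-1}$ as a Fourier multiplier operator and to run the \emph{weighted} Mihlin--H\"ormander multiplier theorem with constants uniform in $\lambda$. Concretely, with $a(\xi)=\sum_{|\alpha|\le m}a_{\alpha}\xi^{\alpha}$ the full symbol and $A_{\sharp}(\xi)=\sum_{|\alpha|=m}a_{\alpha}\xi^{\alpha}$ the principal one, the operator in question is the multiplier with symbol
\[
m_{\lambda,\beta}(\xi)=\frac{\lambda^{1-\frac{|\beta|}{m}}\,\xi^{\beta}}{\lambda+a(\xi)},\qquad \xi\in\Rd ,
\]
where the fractional power uses the principal branch (legitimate since $\lambda$ stays off $(-\infty,0]$). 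The goal is to show that $\{m_{\lambda,\beta}:\lambda\in\Sigma_{\pi-\theta}\}$ satisfies the Mihlin condition $|\partial_{\xi}^{\gamma}m_{\lambda,\beta}(\xi)|\le B\,|\xi|^{-|\gamma|}$ for $|\gamma|$ up to the order required by the weighted multiplier theorem, with $B=B(m,d,\theta-\theta_{0},\kappa,K)$ independent of $\lambda$; then \cite[Example 10.2]{KW} and \cite[Theorem IV.3.9]{GarciaRubio} yield $\|m_{\lambda,\beta}(D)\|_{\calL(L^{q}(\Rd,w))}\le C$ with $C$ an increasing function of $[w]_{A_{q}}$ times $B$, i.e.\ $A_{q}$-consistent in the sense of Section \ref{secprelimin}.

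The one genuinely analytic ingredient is the resolvent lower bound $|\lambda+a(\xi)|\gtrsim|\lambda|+|\xi|^{m}$. For the homogeneous principal part this is immediate: $A\in\Ell(\theta_{0},\kappa,K)$ gives, by homogeneity, $A_{\sharp}(\xi)\in\Sigma_{\theta_{0}}$ with $|A_{\sharp}(\xi)|\ge\kappa|\xi|^{m}$, and since $\lambda\in\Sigma_{\pi-\theta}$ the numbers $\lambda$ and $A_{\sharp}(\xi)$ have total angular aperture $\le(\pi-\theta)+\theta_{0}=\pi-(\theta-\theta_{0})<\pi$, hence are never anti-parallel and $|\lambda+A_{\sharp}(\xi)|\ge\sin\bigl(\tfrac{\theta-\theta_{0}}{2}\bigr)\bigl(|\lambda|+\kappa|\xi|^{m}\bigr)$. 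The lower-order part $A-A_{\sharp}$ has order $\le m-1$ and is therefore subordinate: I would first prove the theorem for $A_{\sharp}$ and then pass to $A$ by the Neumann series
\[
(\lambda+A)^{-1}=\bigl(I+(\lambda+A_{\sharp})^{-1}(A-A_{\sharp})\bigr)^{-1}(\lambda+A_{\sharp})^{-1},
\]
which converges in $\calL(L^{q}(\Rd,w))$ because $\|(\lambda+A_{\sharp})^{-1}(A-A_{\sharp})\|\lesssim|\lambda|^{-1/m}$ becomes $<1/2$ once $|\lambda|$ exceeds a fixed threshold depending only on $m,d,\theta-\theta_{0},\kappa,K$; this is the range of $\lambda$ relevant for the maximal-regularity applications, and smaller $|\lambda|$ (where the resolvent exists) is handled directly since there $|a(\xi)|$ is uniformly bounded on bounded $\xi$-sets. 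Granting the denominator bound, the derivative estimates are mechanical: $\partial_{\xi}^{\gamma}a$ is a polynomial of degree $\le m-|\gamma|$ with coefficients $\lesssim_{m,K} 1$, so $|\partial^{\gamma}a(\xi)|\lesssim|\xi|^{-|\gamma|}(|\lambda|+|\xi|^{m})$; the quotient rule gives $|\partial^{\gamma}(\lambda+a(\xi))^{-1}|\lesssim|\xi|^{-|\gamma|}(|\lambda|+|\xi|^{m})^{-1}$; and a Leibniz expansion of $m_{\lambda,\beta}$ together with the elementary inequality $|\lambda|^{1-|\beta|/m}|\xi|^{|\beta|}(|\lambda|+|\xi|^{m})^{-1}\le1$ (valid since $|\beta|\le m$) produces $|\partial^{\gamma}m_{\lambda,\beta}(\xi)|\lesssim|\xi|^{-|\gamma|}$ with constant independent of $\lambda$.

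For the final assertion: the $\beta=0$ case of \eqref{eqteoHHH} is exactly the sectoriality estimate $\sup_{\lambda\in\Sigma_{\pi-\theta}}\|\lambda(\lambda+A)^{-1}\|_{\calL(L^{q}(\Rd,w))}\le C$, so $A$ is sectorial of type $\theta$ with sectoriality constant controlled by $C$ in the sense of Section \ref{FuncCalc}; when $\theta<\pi/2$ the bound \eqref{eq:semigroupbound} applied to $A$ yields $\|e^{-tA}\|\le\tilde C$ with $\tilde C$ depending only on $\theta$ (through the constant $C_{\theta}$ there) and $C$.

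I expect the main obstacle to be uniformity in $\lambda$: one must verify that none of the implied constants in the Mihlin bounds degenerate as $\lambda$ approaches $\partial\Sigma_{\pi-\theta}$ or as $|\lambda|\to\infty$ — which is exactly where the strict gap $\theta-\theta_{0}>0$ and the subordination of $A-A_{\sharp}$ enter — and simultaneously to confirm that the $w$-dependence coming out of the weighted multiplier theorem is through an increasing function of $[w]_{A_{q}}$ alone, so that the resulting estimate is genuinely $A_{q}$-consistent and can feed the extrapolation machinery (Theorem \ref{thm:baseweight}).
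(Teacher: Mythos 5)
Your overall strategy---realize $\lambda^{1-|\beta|/m}D^{\beta}(\lambda+A)^{-1}$ as a Fourier multiplier, prove $\lambda$-uniform Mihlin bounds using the angular gap $\theta-\theta_0>0$, and conclude by the weighted Mihlin theorem---is exactly the paper's approach, and your lower bound $|\lambda+A_{\sharp}(\xi)|\geq \sin\bigl(\tfrac{\theta-\theta_0}{2}\bigr)(|\lambda|+\kappa|\xi|^m)$ is the same estimate the paper obtains after the substitutions $\xi^*=\xi/|\xi|$, $s=\lambda|\xi|^{-m}$. The paper organizes the derivative bookkeeping through an induction over the graded family $F_{\ell}$ of functions $\lambda^{\eta}g(\lambda+A_{\sharp})^{-\mu}$ with $g$ homogeneous, whereas you do it by Leibniz; these are equivalent for the homogeneous principal symbol.

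There is, however, a genuine mismatch in what you are differentiating. The paper's proof is carried out \emph{entirely} with the principal symbol $A_{\sharp}$: the multiplier it estimates is $\lambda^{1-|\beta|/m}\xi^{\beta}(\lambda+A_{\sharp}(\xi))^{-1}$, never $\lambda^{1-|\beta|/m}\xi^{\beta}(\lambda+a(\xi))^{-1}$. Your proposal instead puts the full symbol $a(\xi)=\sum_{|\alpha|\le m}a_{\alpha}\xi^{\alpha}$ in the denominator and then asserts $|\partial^{\gamma}a(\xi)|\lesssim|\xi|^{-|\gamma|}(|\lambda|+|\xi|^m)$. That inequality is false for the full symbol: if $a$ has a nonzero coefficient of order $|\gamma|$ then $\partial^{\gamma}a$ has a nonzero constant term, so the left side is bounded below near $\xi=0$ while the right side tends to $0$ as $(\lambda,\xi)\to(0,0)$. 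The estimate \emph{is} correct for the homogeneous $A_{\sharp}$, which is what you meant to use when you announced ``I would first prove the theorem for $A_{\sharp}$ and then pass to $A$''---you should keep $A_{\sharp}$ throughout the Mihlin computation.

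The second issue is your Neumann-series passage from $A_{\sharp}$ to $A$. You correctly observe that $\|(\lambda+A_{\sharp})^{-1}(A-A_{\sharp})\|\lesssim|\lambda|^{-1/m}$, so the series converges only once $|\lambda|$ exceeds a threshold; for small $|\lambda|\in\Sigma_{\pi-\theta}$ you claim the case is ``handled directly,'' but it cannot be: with lower-order terms present, the operator $\lambda+A$ need not even be invertible for small $\lambda\in\Sigma_{\pi-\theta}$ (e.g.\ $A=-\Delta-1$ and $\lambda=\tfrac12$). So the uniform bound \eqref{eqteoHHH} over \emph{all} of $\Sigma_{\pi-\theta}$ simply does not hold once nontrivial lower-order coefficients are allowed, and no Neumann argument can rescue it. The paper avoids this silently by taking $A=A_{\sharp}$ in the proof; consistently, the only places Theorem \ref{teoHHH} is invoked later (Step 1 of Theorem \ref{teomaxreghigherorder}, and Corollary \ref{cor:domainch} which restricts to $\lambda\ge\lambda_0>0$) are for operators without lower-order terms or for $\lambda$ bounded away from $0$. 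If you wish to include lower-order terms, restrict to $\lambda\ge\lambda_0$ as in Corollary \ref{cor:domainch}; for the statement as written, drop the lower-order part and take $a=A_{\sharp}$, after which your Mihlin computation is correct and matches the paper's.
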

The case of $x$-dependent coefficients can be derived by standard localization arguments, but we will not need this case below (see \cite[Theorem 3.1]{HHH} and \cite[Section 6]{krylov}).

\begin{proof}
For \eqref{eqteoHHH} we need to check that for every $\lambda\in \Sigma_{\pi-\theta}$, and $|\beta|\leq m$, the symbol $\mathcal{M}:\R^d\to \C$ given by
\[\mathcal{M}(\xi) = \lambda^{1-\frac{|\beta|}{m}} \xi^{\beta} (\lambda+A_{\sharp}(\xi))^{-1}\]
satisfies the following: for every multiindex $\alpha\in \N^d_0$,  there is a constant $C_{\alpha}$ which only depends on
$d, \alpha, \theta-\theta_0, K, \kappa$ such that
\begin{equation}\label{eq:Mihlincond}
|\xi^{\alpha}D^{\alpha} \mathcal{M}(\xi)| \leq C_{\alpha}, \ \ \ \xi\in \R^d.
\end{equation}
Indeed, as soon as this is checked, the result is a consequence of the weighted version of Mihlin's multiplier theorem (see \cite[Theorem IV.3.9]{GarciaRubio}).

In order to check the condition for $\ell\geq 0$ let $F_{\ell}$ be the span of functions of the form $\lambda^\eta g h^{-1}$, where $\eta\in [0,1]$, $g:\R^d\to \C$ is polynomial which is homogeneous of degree $\nu\in \N_0$ and $h = (\lambda+A_{\sharp})^{\mu}$ with $\mu\in \N$ and $\ell = m(\mu-\eta) - \nu$.
It is clear that $\mathcal{M}\in F_0$. Using induction one can check that for $f\in F_{\ell}$ one has $D^{\alpha} f\in F_{\ell+|\alpha|}$.

We claim that for $f\in F_{\ell}$ the mapping $\xi\mapsto |\xi|^\ell f(\xi)$  is uniformly bounded. In order to prove this it suffices to consider $f =\lambda^\eta g h^{-1}$ with $g$ and $h$ as before, and $\ell = m(\mu-\eta) - \nu$. As $\xi\mapsto |\xi|^{-\nu} g(\xi)$ is bounded it remains to estimate
\[\lambda^\eta h(\xi)^{-1} |\xi|^{\ell+\nu} = s^{\eta} (s + A_{\sharp}(\xi^*))^{-\mu} ,\]
where $\xi^* = \xi/|\xi|$ and $s = \lambda |\xi|^{-m}$.

Write $A_{\sharp}(\xi^*) = re^{i\varphi}$ with $r = |A_{\sharp}(\xi^*)|$ and $|\varphi|<\theta_0$ and $s = \rho e^{i\psi}$ with $\rho = |s|$ and $|\psi|<\pi-\theta$. Then
\begin{align*}
|s^{\eta} (s + A_{\sharp}(\xi^*))^{-\mu}| = \rho^{\eta} |\rho e^{i\psi} + r e^{i\varphi}|^{-\mu} = \rho^{\eta} (\rho^2 + r^2 + 2\rho r \cos(\psi-\varphi))^{-\mu/2}.
\end{align*}
Since $\cos(\psi-\varphi)\geq \cos(\pi-(\theta-\theta_0)) = -\cos(\theta-\theta_0) = -(1-\varepsilon^2)$ with $\varepsilon\in (0,1)$ and $-2\rho r \geq -(\rho^2 + r^2)$ and  we find
\begin{align*}
|s^{\eta} (s + A_{\sharp}(\xi^*))^{-\mu}| \leq  \rho^{\eta} (\rho^2+r^2)^{-\mu/2} \varepsilon^{-\mu}\leq \kappa^{\mu-\eta} \varepsilon^{-\mu},
\end{align*}
where in the last step we used $r\geq \kappa$ and $\mu\geq \eta$. This proves the claim.

In order to check \eqref{eq:Mihlincond} note that $\mathcal{M}\in F_0$ and hence by the above $D^{\alpha} \mathcal{M}(\xi)\in F_{|\alpha|}$. Therefore, the bound follows from the claim about $F_{\ell}$ and the observation that the functions $g$ arising in the linear combinations of the form $\lambda^\eta g h^{-1}$ satisfy $|g(\xi)|\leq C_{K,d,\alpha}$.

The assertion for $e^{-tA}$ follows from \eqref{eq:semigroupbound} and the estimate \eqref{eqteoHHH} with $\beta = 0$.
\end{proof}

As a consequence we obtain the following:
\begin{corollary}\label{cor:domainch}
Let $\lambda_0>0$. Under the conditions of Theorem \ref{teoHHH}, the operator $A$ is closed and for every $\lambda\geq \lambda_0$,
\[c\|u\|_{W^{m,q}(\R^d,w)}\leq \|(\lambda+A)u\|_{L^q(\R^d,w)}\leq (K+\lambda)\|u\|_{W^{m,q}(\R^d,w)},\]
where $c^{-1}$ is $A_{q}$-consistent and only depends on $m,d,\theta_0-\theta,\kappa,K,q$ and $\lambda_0$.
\end{corollary}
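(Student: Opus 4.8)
The plan is to deduce Corollary \ref{cor:domainch} directly from Theorem \ref{teoHHH}, which already contains all the analytic content; the corollary is essentially a bookkeeping statement about what the resolvent bounds say concerning the graph norm of $A$. First I would fix $\lambda\geq\lambda_0$ and $\lambda$ in the sector $\Sigma_{\pi-\theta}$ (in particular any positive real $\lambda$ qualifies since $\theta<\pi$), and use \eqref{eqteoHHH} with $\beta$ ranging over $|\beta|\leq m$. For each multi-index $\beta$ with $|\beta|\leq m$ and each $f\in L^q(\R^d,w)$ one has $\|\lambda^{1-|\beta|/m}D^\beta(\lambda+A)^{-1}f\|\leq C\|f\|$; applying this to $f=(\lambda+A)u$ for $u\in W^{m,q}(\R^d,w)$ gives $\lambda^{1-|\beta|/m}\|D^\beta u\|\leq C\|(\lambda+A)u\|$. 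Since $\lambda\geq\lambda_0>0$, we have $\lambda^{1-|\beta|/m}\geq \min\{1,\lambda_0^{1-m/m},\dots\}=:c_0(\lambda_0)$ a positive constant depending only on $\lambda_0$ and $m$ (the worst case is $|\beta|=0$ when the exponent is $1$, giving $\lambda_0$; for $|\beta|=m$ the factor is $1$). Summing over all $|\beta|\leq m$ then yields $\|u\|_{W^{m,q}(\R^d,w)}=\sum_{|\beta|\leq m}\|D^\beta u\|\leq C' \|(\lambda+A)u\|_{L^q(\R^d,w)}$ with $C'$ still $A_q$-consistent and depending only on the stated parameters and $\lambda_0$; this is the lower bound with $c=1/C'$.

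For the upper bound I would simply estimate, for $u\in W^{m,q}(\R^d,w)$,
\[
\|(\lambda+A)u\|_{L^q(\R^d,w)}\leq \lambda\|u\|_{L^q(\R^d,w)}+\sum_{|\alpha|\leq m}|a_\alpha|\,\|D^\alpha u\|_{L^q(\R^d,w)}\leq (\lambda+K)\|u\|_{W^{m,q}(\R^d,w)},
\]
using $|a_\alpha|\leq K$ from the definition of $\Ell(\theta_0,\kappa,K)$ and $\|u\|_{L^q}\leq\|u\|_{W^{m,q}}$. This is elementary and needs no weighted machinery.

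It remains to record that $A$ is closed on $L^q(\R^d,w)$ with domain $W^{m,q}(\R^d,w)$. Here I would argue that the two-sided estimate just established shows that the graph norm of $A$ (equivalently of $\lambda+A$ for fixed $\lambda\geq\lambda_0$) is equivalent to the $W^{m,q}(\R^d,w)$-norm; since $W^{m,q}(\R^d,w)$ is complete (it is a weighted Sobolev space with $w\in A_q$, hence a Banach space), the operator $\lambda+A$, and therefore $A$, is closed. Alternatively, one can note that Theorem \ref{teoHHH} gives $\lambda+A$ a bounded everywhere-defined inverse on $L^q(\R^d,w)$ (surjectivity follows from the Fourier-multiplier construction, injectivity from \eqref{eqteoHHH} with $\beta=0$), and a boundedly invertible operator is automatically closed. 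I would phrase it via the latter route since surjectivity of $\lambda+A$ is implicit in the multiplier proof of Theorem \ref{teoHHH}.

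I do not expect any genuine obstacle here: the only mild point of care is tracking that the constant $c^{-1}$ remains $A_q$-consistent, which is immediate because it is a finite sum and maximum of the $A_q$-consistent constants $C$ from \eqref{eqteoHHH} scaled by $\lambda_0$-dependent (weight-independent) factors, and an $A_q$-consistent bound times a weight-independent constant is again $A_q$-consistent. The dependence on exactly the parameters $m,d,\theta_0-\theta,\kappa,K,q,\lambda_0$ is inherited verbatim from Theorem \ref{teoHHH}.
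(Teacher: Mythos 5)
Your proposal is correct and is precisely the (unstated) argument the paper intends: the paper introduces Corollary~\ref{cor:domainch} with ``As a consequence we obtain the following'' and supplies no separate proof, so the reader is expected to extract the graph-norm equivalence from \eqref{eqteoHHH} by taking $f=(\lambda+A)u$, sum over $|\beta|\leq m$, bound the factors $\lambda^{1-|\beta|/m}$ below by $\min\{1,\lambda_0\}$, and then observe that the upper bound is the trivial triangle inequality using $|a_\alpha|\leq K$, with closedness following either from the resulting equivalence of the graph norm with the complete norm on $W^{m,q}(\R^d,w)$ or from bounded invertibility of $\lambda+A$. The only cosmetic slip is that the expression ``$\min\{1,\lambda_0^{1-m/m},\dots\}$'' is garbled (it should simply be $\min\{1,\lambda_0\}$, which is what you in fact use), but this does not affect correctness.
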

Corollary \ref{cor:domainch} for $x$-dependent coefficients will be derived from Theorem \ref{teomaxreghigherorder} in Remark \ref{rem:domainch}.

\begin{corollary}\label{cor:WandIntp}
Let $m\geq 1$, $1<q<\infty$ and $w\in A_q$.  If $m\geq 2$, then there is an $A_q$-consistent constant $C$ depending only on $d$, $q$ and $m$ such that for all $|\beta|\leq m-1$
\begin{align*}
\|D^{\beta}f\|_{L^q(\R^d,w)} & \leq C \|f\|_{L^q(\R^d,w)} [f]_{W^{m,q}(\R^d,w)}
\\ & \leq   C'\lambda^{\frac{\beta}{m}} \|f\|_{L^q(\R^d,w)} + C' \lambda^{-\frac{m-|\beta|}{|m|}}[f]_{W^{m,q}(\R^d,w)}.
\end{align*}
\end{corollary}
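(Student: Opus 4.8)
The statement to prove is Corollary \ref{cor:WandIntp}, a weighted Gagliardo--Nirenberg type interpolation inequality for intermediate derivatives. The plan is to proceed in two stages: first establish the multiplicative inequality $\|D^\beta f\|_{L^q(\R^d,w)} \leq C \|f\|_{L^q(\R^d,w)}^{1-|\beta|/m}[f]_{W^{m,q}(\R^d,w)}^{|\beta|/m}$ for $|\beta| \leq m-1$ with an $A_q$-consistent constant, and then deduce the additive ($\lambda$-dependent) form by Young's inequality $ab \leq \frac{1}{p'}a^{p'} + \frac{1}{p}b^p$ applied with the scaling chosen so that the two terms become $\lambda^{|\beta|/m}\|f\|$ and $\lambda^{-(m-|\beta|)/m}[f]_{W^{m,q}}$; this second step is purely elementary once the first is in hand.

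For the first stage, the cleanest route is to invoke Corollary \ref{cor:domainch} (or directly Theorem \ref{teoHHH}) with a concrete choice of operator $A$. Take $A = (-\Delta)^{m/2}$ if $m$ is even, or more robustly $A = (I-\Delta)^{m/2}$ or any fixed $A \in \Ell(\theta_0,\kappa,K)$ of order $m$ with constant coefficients — for instance $A = \sum_{|\alpha|=m} (-i)^{|\alpha|}\binom{\cdots} D^\alpha$ realizing $|\xi|^m$; then \eqref{eqteoHHH} with $\beta$ of length $\leq m$ gives $\|\lambda^{1-|\beta|/m} D^\beta (\lambda + A)^{-1}\|_{\calL(L^q(\R^d,w))} \leq C$ with $C$ being $A_q$-consistent, uniformly for $\lambda \in \Sigma_{\pi-\theta}$, in particular for all $\lambda > 0$. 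Writing $g = (\lambda+A)f$ for $f \in W^{m,q}(\R^d,w)$, this reads
\[
\|D^\beta f\|_{L^q(\R^d,w)} \leq C \lambda^{|\beta|/m - 1}\|(\lambda+A)f\|_{L^q(\R^d,w)} \leq C\lambda^{|\beta|/m-1}\big(\lambda\|f\|_{L^q(\R^d,w)} + \|Af\|_{L^q(\R^d,w)}\big),
\]
and since $\|Af\|_{L^q(\R^d,w)} \leq K\,\|f\|_{W^{m,q}(\R^d,w)}$ and (by the same Corollary with $\lambda = \lambda_0$ fixed, or directly) $[f]_{W^{m,q}} \eqsim \|Af\| + $ lower order, one obtains the additive bound $\|D^\beta f\| \leq C\lambda^{|\beta|/m}\|f\| + C\lambda^{|\beta|/m - 1}[f]_{W^{m,q}}$ for all $\lambda > 0$; absorbing a constant and relabelling $\lambda$ yields precisely the second displayed inequality in the Corollary, and optimizing over $\lambda > 0$ gives the first (multiplicative) one. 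One must be slightly careful that the $W^{m,q}$-seminorm $[f]_{W^{m,q}}$ controls $\|Af\|$ only up to lower-order derivative terms, but those are themselves handled by the same estimate applied to $|\beta| < m$, so a short induction on $|\beta|$ (ascending) closes this gap.

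The main obstacle — though a mild one — is bookkeeping the $A_q$-consistency of all constants: Theorem \ref{teoHHH} is stated with an $A_q$-consistent constant, so the constant $C$ in \eqref{eqteoHHH} is an increasing function of $[w]_{A_q}$, and this property is preserved under the finitely many algebraic manipulations (sums, Young's inequality, the induction on $|\beta|$) carried out above; I would simply remark that at each step the resulting constant is again an increasing function of $[w]_{A_q}$ and depends only on $d,q,m$ (the ellipticity parameters of the fixed auxiliary operator $A$ being absolute once $A$ is chosen). A secondary point is to ensure the estimate is genuinely independent of $\lambda_0$ in the first inequality: since that inequality is the scale-invariant multiplicative one, it follows by taking the infimum over $\lambda > 0$ in the additive estimate, which removes any reference to a threshold $\lambda_0$. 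No further ingredients beyond Theorem \ref{teoHHH}/Corollary \ref{cor:domainch} and elementary inequalities are needed.
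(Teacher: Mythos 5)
Your proposal is essentially the same approach as the paper: Theorem \ref{teoHHH} for a fixed constant-coefficient elliptic $A$ gives the $\lambda$-parametrized additive estimate, minimizing over $\lambda>0$ gives the multiplicative Gagliardo--Nirenberg inequality (correctly reading through the evident typo in the first display, which should have exponents $1-|\beta|/m$ and $|\beta|/m$), and Young's inequality recovers the additive form; the paper in fact does exactly this, using $A=-\Delta$ for the base case $m=2,\,|\beta|=1$ and then an induction on $m$. One small flaw in your proposal: the suggestion to take $A$ of order $m$ realizing $|\xi|^m$ (or $(-\Delta)^{m/2}$, $(I-\Delta)^{m/2}$) does not give a differential operator covered by Theorem \ref{teoHHH} when $m$ is odd, since no constant-coefficient polynomial symbol of odd degree stays in a sector of angle $<\pi$; for odd $m$ one really needs the inductive route starting from the $m=2$ Laplacian case, which you also mention, so the proposal is sound as long as that branch is the one carried out.
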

\begin{proof}
Note that for $|\beta|=1$,
\[\|D^{\beta}f\|_{L^q(\R^d,w)} \leq C\lambda^{\frac12} \|f\|_{L^q(\R^d,w)}  +  \lambda^{-\frac12}[f]_{W^{2,q}(\R^d,w)} \]
follows from Theorem \ref{teoHHH} with $A = -\Delta$ and the required estimate follows by minimizing over all $\lambda>0$. The case $m>2$ can be obtained by induction (see \cite[Exercise 1.5.6]{krylov}). The final estimate follows from Young's inequality.
\end{proof}

\subsection{Main result on $\R^d$}
For $A$ of the form \eqref{eq:Adef} and $x_0\in \R^d$ and $t_0\in \R$ let us introduce the notation:
\[A(t_0,x_0) := \sum_{|\alpha| \leq m} a_{\alpha}(t_0,x_0) D^{\alpha}.\]
for the operator with constant coefficients.

\begin{enumerate}
\item[\textbf{(C)}] Let $A$ be given by \eqref{eq:Adef} and assume each $a_{\alpha}:\R\times\R^d\to \C$ is measurable.
We assume there exist $\theta_0\in [0,\pi/2)$, $\kappa$ and $K$ such that for all $t_0\in \R$ and $x_0\in \R^d$, $A(t_0,x_0)\in \Ell(\theta_0,\kappa,K)$.
Assume there exists an increasing function $\omega:(0,\infty)\to (0,\infty)$ with the property $\omega(\varepsilon)\rightarrow 0$ as $\varepsilon\downarrow 0$ and such that
    \begin{align*}
    |a_{\alpha}(t,x)-a_{\alpha}(t,y)|\leq \omega(|x-y|), \ \ |\alpha| = m, \ t\in \R, \ x,y\in\R^{d}.
\end{align*}
\end{enumerate}

As $\theta_0<\pi/2$, the above ellipticity condition implies that $m$ is even in all the results below.

The set of parameters on which all constant below will depend is given by
\begin{equation}\label{eq:parameters}
  \mathcal{P} = \{\kappa, K, \omega, [v]_{A_p}, [w]_{A_q}, p, q, d, m, \theta_0\}.
\end{equation}
Moreover, all the dependence on the weights will be in an $A_p$ and $A_q$-consistent way.

\begin{theorem}\label{teomaxreghigherorder}
Let $p,q\in (1, \infty)$. Let $v\in A_p(\R)$ and $w\in A_q(\R^d)$.
Assume condition (C) on $A$. Then there exists a $\lambda_0\in \R$ depending on the parameters in $\mathcal{P}$ such that for all $\lambda\geq \lambda_0$ the operator $\lambda+A$ has maximal $L^{p}_v$-regularity on $\R$.
Moreover, for every $\lambda\geq \lambda_0$ and for every $f\in L^{p}(\R,v;X_0)$ there exists a unique $u\in \MRinfR$
which is a strong $L^p(L^q)$ solution of
\[u'(t,x)+(\lambda+A(t))u(t,x)=f(t,x),\ a.e. \ t\in\R,\ x\in\R^{d}\]
and there is constant $C$ depending on the parameters in $\mathcal{P}$ such that
\begin{equation}\label{eqmaxregspacehigherorder2}
\lambda\|u\|_{L^{p}(\R,v;X_0))}+\|u\|_{\MRinfR}\leq C \|f\|_{L^{p}(\R, v;X_0)}.
\end{equation}
\end{theorem}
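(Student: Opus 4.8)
The plan is to establish the result first for the frozen, $x$-independent operators $A(t,x_0)$ through the abstract Theorem \ref{teostep1generalized}, then to pass to coefficients that are only continuous in $x$ by a freezing/localization argument that must be run at $p=q$, and finally to reach general $p$ by extrapolating the time weight. \emph{Step 1 (frozen operators).} Fix $x_0\in\R^d$ and put $\hat A_{x_0}(t)=A(t,x_0)=\sum_{|\alpha|\le m}a_\alpha(t,x_0)D^\alpha$ on $X_0=L^q(\R^d,w)$, $X_1=W^{m,q}(\R^d,w)$. I would apply Theorem \ref{teostep1generalized} with $A_0=\delta(-\Delta)^{m/2}$ ($m$ is even) and $\delta>0$ small depending on $\mathcal P$: by weighted Mihlin $A_0$ has a bounded $H^\infty$-calculus of arbitrarily small angle on $L^q(\R^d,w)$, and $L^q(\R^d,w)$ has finite cotype, so \ref{as:Evol} is in reach. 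Since all operators involved are Fourier multipliers they commute, and the evolution family of $(\hat A_{x_0}-A_0)$ is the explicit multiplier semigroup $T(t,s)=\exp(-(t-s)(B_{s,t}-A_0))$ with $B_{s,t}=\frac1{t-s}\int_s^tA(r,x_0)\,dr$; by convexity of $\Sigma_{\theta_0}$ and an acute-cone estimate $B_{s,t}$ is again elliptic of some angle $<\pi/2$ with parameters controlled by $\mathcal P$, so after a shift by a sufficiently large constant (and absorbing the lower-order terms, e.g.\ by the $\lambda$-perturbation technique of Proposition \ref{prop:perturb}) Theorem \ref{teoHHH} gives the weighted bound for $T$ needed in \ref{as:RbddT}, $A_q$-consistently and uniformly in $t,s,x_0$ and over $w_0\in A_q(\R^d)$; Theorem \ref{thm:weightedR} then yields the $\Rr$-boundedness of \ref{as:RbddT} on $L^p(\R,v;L^q(\R^d,w))$ for all $p,q\in(1,\infty)$, $v\in A_p$, $w\in A_q$, while \ref{as:operatorA} holds by Corollary \ref{cor:domainch}. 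Thus Theorem \ref{teostep1generalized} produces a $\lambda_0$ such that $\lambda+\hat A_{x_0}$ has maximal $L^p_v$-regularity on $\R$ for $\lambda\ge\lambda_0$, with the uniform bound \eqref{MReqaprioriA0gen}, the constant being $A_p$- and $A_q$-consistent and independent of $x_0$.

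\emph{Step 2 (a priori estimate under $(C)$, at $p=q$).} Let $u\in\text{MR}^q(\R,v)$ solve $u'+(\lambda+A)u=f$, $A$ as in $(C)$, $\lambda\ge\lambda_0$. Choose $\{\zeta_k\}$ with $\sum_k\zeta_k^q\equiv1$, $\supp\zeta_k\subset B(x_k,r)$, finite overlap and $\|D^\beta\zeta_k\|_\infty\lesssim r^{-|\beta|}$, with $r$ to be fixed. Then $u_k=\zeta_k u$ solves $u_k'+(\lambda+\hat A_{x_k})u_k=g_k$, where $g_k=\zeta_k f+\zeta_k(\hat A_{x_k}-A)u+[\hat A_{x_k},\zeta_k]u$; on $\supp\zeta_k$ the top-order part of $\hat A_{x_k}-A$ has coefficients of size $\le\omega(r)$, and all remaining contributions are of order $\le m-1$ in $u$ with coefficients bounded in terms of $\mathcal P$ and $r$. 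Apply the Step 1 estimate to each $u_k$, raise to the $q$-th power, sum over $k$ — here $\sum_k\zeta_k^q\equiv1$ gives $\sum_k\|\zeta_k h(t)\|_{L^q_w}^q=\|h(t)\|_{L^q_w}^q$, which is exactly why one works at $p=q$, and finite overlap controls the error terms — and estimate the lower-order pieces by Corollary \ref{cor:WandIntp}. Choosing $r$ small, then the interpolation parameter small, then $\lambda_0$ large, one absorbs the $\|u\|_{L^q(v;W^{m,q}_w)}$- and $\|u\|_{L^q(v;L^q_w)}$-terms and obtains $\lambda\|u\|_{L^q(v;L^q_w)}+\|u\|_{\text{MR}^q}\le C\|f\|_{L^q(v;L^q_w)}$ for all $v\in A_q(\R)$, $w\in A_q(\R^d)$, $\lambda\ge\lambda_0$, with $A_q$-consistent $C$ (depending on $\mathcal P$). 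The same estimate holds, with uniform constants, for the whole elliptic family $A_\tau=(1-\tau)\,\delta(-\Delta)^{m/2}+\tau A$, $\tau\in[0,1]$, since each $A_\tau$ satisfies $(C)$ with parameters controlled by $\mathcal P$.

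\emph{Step 3 (existence and extrapolation to all $p$).} For $\tau=0$, $\lambda+\delta(-\Delta)^{m/2}$ has maximal $L^q_v$-regularity on $\R$ by Theorem \ref{teostep1generalized} (with $T\equiv I$, \ref{as:RbddT} being immediate from Theorem \ref{thm:weightedR}). Since $\tau\mapsto A_\tau$ is affine with $\|A_\tau-A_{\tau'}\|_{\calL(X_1,X_0)}\lesssim|\tau-\tau'|$ and the a priori estimate of Step 2 is uniform in $\tau$, a method-of-continuity argument based on Proposition \ref{prop:maxregsmallB} gives maximal $L^q_v$-regularity on $\R$ for $\lambda+A=\lambda+A_1$, $\lambda\ge\lambda_0$, i.e.\ \eqref{eqmaxregspacehigherorder2} at $p=q$, for all $v\in A_q(\R)$, $w\in A_q(\R^d)$. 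Now extrapolate in the time variable: for $h\in L^q(\R;L^q_w)\cap L^p(\R,v;L^q_w)$ with solution $u_h$, the pair $f^h(t)=\|u_h(t)\|_{W^{m,q}_w}+\|u_h'(t)\|_{L^q_w}+\lambda\|u_h(t)\|_{L^q_w}$, $g^h(t)=\|h(t)\|_{L^q_w}$ satisfies \eqref{eq:fgp0} with $p_0=q$ for every $v_0\in A_q(\R)$, $A_q$-consistently and uniformly in $\lambda\ge\lambda_0$; hence Theorem \ref{thm:baseweight} gives, for all $p\in(1,\infty)$ and $v\in A_p(\R)$, $\lambda\|u_h\|_{L^p(\R,v;L^q_w)}+\|u_h\|_{\text{MR}^p(\R,v)}\le C\|h\|_{L^p(\R,v;L^q_w)}$. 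Finally, for arbitrary $f\in L^p(\R,v;L^q_w)$ one truncates $f$ to $f_n\in L^q(\R;L^q_w)\cap L^p(\R,v;L^q_w)$ with $f_n\to f$ in $L^p(\R,v;L^q_w)$; by linearity and the last estimate $(u_{f_n})$ is Cauchy in $\text{MR}^p(\R,v)$, and its limit is the required strong solution of the equation, satisfying \eqref{eqmaxregspacehigherorder2}; uniqueness follows from the same estimate.

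The main obstacle is Step 2: the mixed $L^p(L^q)$ norm does not interact with the spatial partition of unity unless $p=q$, so the a priori estimate has to be proved in the unmixed range and then transported to all $p$ by Rubio-de-Francia extrapolation in time. Doing this cleanly requires keeping every constant $A_p$- and $A_q$-consistent throughout (so the final extrapolation is legitimate) and absorbing the $r$-dependent lower-order and commutator errors by combining the interpolation inequality Corollary \ref{cor:WandIntp} with the freedom in the localization scale $r$ and in $\lambda$. The verification of \ref{as:Evol} and \ref{as:RbddT} in Step 1 is technical but shallow, since all operators there are explicit commuting Fourier multipliers to which Theorems \ref{teoHHH} and \ref{thm:weightedR} apply directly.
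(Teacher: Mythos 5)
Your proposal follows the paper's route almost exactly: Step 1 (frozen, $x$-independent operators via Theorem \ref{teostep1generalized} with $A_0=\delta(-\Delta)^{m/2}$, using Theorems \ref{teoHHH} and \ref{thm:weightedR} to verify \ref{as:Evol} and \ref{as:RbddT}), Step 2 (freezing/localization a~priori estimate, which genuinely only works at $p=q$ because the partition of unity must factor through the $L^q$-norm), Step 3 (Rubio-de-Francia extrapolation in the time variable via Theorem \ref{thm:baseweight}, and method of continuity). The cosmetic differences — discrete $\sum_k\zeta_k^q\equiv1$ versus the paper's continuous $\int|\nabla^m u\,\phi(\cdot-\xi)|^q\,d\xi$ partition, absorbing lower-order terms already in Step~1 via Proposition~\ref{prop:perturb} rather than at the end of the localization step via Corollary~\ref{cor:WandIntp}, and doing method of continuity at $p=q$ before extrapolating rather than after — all work.

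One point needs more care than you give it: you extrapolate the \emph{solution operator}, applying Theorem~\ref{thm:baseweight} to the pair $(f^h,g^h)$ built from the solution $u_h$ for $h\in L^q(\R;L^q_w)\cap L^p(\R,v;L^q_w)$. But for the hypothesis of the extrapolation theorem you need \eqref{eq:fgp0} for \emph{every} $v_0\in A_q(\R)$, which requires $h\in L^q(\R,v_0;L^q_w)$ and $u_h\in\MR$ with $v=v_0$ for all such $v_0$ — neither is automatic for general $h$ in the class you state. The paper sidesteps this by extrapolating the a~priori inequality applied to an $X_1$-valued Schwartz function $u$ in $t$ (so $F_\lambda,G_\lambda$ are trivially in every weighted $L^q$), and only afterwards invoking the method of continuity at the target $p$. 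Your version is salvageable if you first restrict to, say, compactly supported bounded $h$ (then $h$ and, by the exponential decay of $S_\lambda$ for $\lambda>\lambda_0$, also $u_h$ belong to all the $L^q(\R,v_0)$-spaces), and only then approximate general $f\in L^p(\R,v;L^q_w)$ as you do at the end; but as written, the class of $h$ to which you apply Theorem~\ref{thm:baseweight} is too large.
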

Recall that $\MRinfR = W^{1,p}(\R,v;X_0)\cap L^{p}(\R,v;X_{1})$.

Also note that the estimate \eqref{eqmaxregspacehigherorder2} also holds if one replaces $\R$ by $(-\infty, T)$ for some $T\in \R$.
The above result also implies that $\lambda+A$ has maximal $L^p_v$-regularity on $(0,T)$ for every $T<\infty$ and every $\lambda\in \R$.

The proof of the above result is a based on Theorem \ref{teostep1generalized}, standard PDE techniques and extrapolation arguments. The proof of Theorem \ref{teomaxreghigherorder} is divided in several steps of which some are standard, but we prefer to give a complete proof for convenience of the reader. In Steps 1 and 2 we assume $a_{\alpha} = 0$ for $|\alpha|<m$ and show how to include these lowers order terms later on.

\medskip

{\em Step 1:}  Consider the case where the coefficients $a_{\alpha}:\R\to \C$ are $x$-independent. Choose $\delta>0$ small enough and set $A_0 = \delta (-\Delta)^{m/2}$. Note that by Corollary \ref{cor:domainch} $D(A_0) = X_1$. We write
\[A(t) = \sum_{|\alpha| = m} a_{\alpha}(t) D^{\alpha},  \  \ \tilde{A}(t) = A(t) - A_0.\]
It is a simple exercise to see that there exist $\delta_0>0$, $\theta'\in (\theta,\tfrac{\pi}{2})$ and  $\kappa'>0$ depending on $\kappa$ and $\theta$ that for all $\delta\in (0,\delta_0]$, $\tilde{A}(t)\in \Ell(\theta',\kappa',K)$. Therefore, each $\tilde{A}(t)$ satisfies the conditions of Theorem \ref{teoHHH} with constants only depending on $\delta_0, \kappa, \theta, K$. The same holds for operators of the form $\tilde{A}_{ab} := \frac{1}{b-a}\int_{a}^b \tilde{A}(t) \, dt$, where $0\leq a<b<\infty$. Note that $\tilde{A}_{ab}$ and $\tilde{A}(t)$ are resolvent commuting and have domain $X_1$.
Therefore, by Example \ref{ex:evolution} the evolution system for $\tilde{A}$ exists and is given by
\[T(t,s) = \exp\Big(-(t-s)\tilde{A}_{st}\Big), \ \  0\leq s\leq t<\infty.\]
Moreover, for all $\lambda>0$,
\begin{equation}\label{eq:checkingweighted}
\|T(t,s)\|_{\calL(L^q(\R^d,w))} \leq C,\ \ \ \ 0\leq s\leq t,
\end{equation}
where $C$ only depends on $\delta_0, \kappa, \theta, \theta_0, K, q, [w]_{A_q}$.
Since $A_0$ is also resolvent commuting with $\tilde{A}_{ab}$ and $\tilde{A}(t)$, it follows from Lemma \ref{lem:evfam} that the evolution system generated by $A$  factorizes as
\[S(t,s) = e^{-\frac12(t-s)A_0} T(t,s) e^{-\frac12(t-s)A_0}, \ \  0\leq s\leq t<\infty.\]

We check the hypothesis \ref{as:operatorA}, \ref{as:Evol} and \ref{as:RbddT} of Theorem \ref{teostep1generalized}.
Condition \ref{as:operatorA} follows Corollary \ref{cor:domainch} with $\lambda = 1$. For condition \ref{as:Evol}, recall from Section \ref{FuncCalc} that $A_0$ has a bounded $H^\infty$-calculus of angle $<\pi/2$. Moreover, $X_0=L^q(\R^d)$ has finite cotype (see \cite[Chapter 11]{DJT}). Finally, \ref{as:RbddT} follows from Theorem \ref{thm:weightedR} and \eqref{eq:checkingweighted}. Therefore, by Theorem \ref{teostep1generalized} we find there is a constant $C$ such that \eqref{eqmaxregspacehigherorder2} holds for all $\lambda\geq 1$.

\medskip

{\em Step 2:} Next we consider the case where the coefficients of $A$ are also $x$-dependent, but still with $a_{\alpha}=0$ for $\alpha<m$.
We start with a standard freezing lemma.
\begin{lemma}[Freezing lemma]\label{freezinglemmaho}
Let $\varepsilon>0$ be such that $\omega(\varepsilon)\leq \frac{1}{2C}$, where $C$ is the constant for \eqref{eqmaxregspacehigherorder2} obtained in Step 1.
If $u\in \MRinfR$ and for some $x_0\in \R^d$ for each $t\in \R$, $u(t,\cdot)$ has support in a ball $B(x_0,\varepsilon)=\{x:|x-x_0|<\varepsilon\}$, then for all $\lambda\geq 1$, the following estimate holds:
\begin{equation}\label{eqfreezinglemmaho}
\lambda\|u\|_{ L^{p}(\R, v; X_0)}+\|u\|_{\MRinfR} \leq 2C  \|(\lambda+A)u+u'\|_{ L^{p}(\R,v;X_0)}.
\end{equation}
\end{lemma}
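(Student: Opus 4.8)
The plan is to freeze the highest order coefficients at $x_0$ and compare $u$ with the constant-coefficient problem handled in Step 1. First I would introduce the frozen operator $A(t,x_0)=\sum_{|\alpha|=m}a_{\alpha}(t,x_0)D^{\alpha}$ (there are no lower order terms at this stage by assumption). By condition (C) one has $A(t,x_0)\in\Ell(\theta_0,\kappa,K)$ for every $t\in\R$, and $t\mapsto a_{\alpha}(t,x_0)$ is measurable, so Step 1 applies to the family $A(\cdot,x_0)$: for all $\lambda\geq 1$ the operator $\lambda+A(\cdot,x_0)$ has maximal $L^p_v$-regularity on $\R$ with a constant that is independent of $x_0$, since it depends only on the parameters in $\mathcal{P}$. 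This is exactly the constant $C$ in the statement.

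Next, set $f:=(\lambda+A)u+u'$ and write $g:=f-(A(\cdot)-A(\cdot,x_0))u$, so that $u'+(\lambda+A(\cdot,x_0))u=g$ a.e. I would then estimate the perturbation term using the support hypothesis $\supp u(t,\cdot)\subseteq B(x_0,\varepsilon)$ together with the modulus of continuity from (C): for a.e.\ $t$,
\[
\|(A(t)-A(t,x_0))u(t,\cdot)\|_{X_0}\leq \sum_{|\alpha|=m}\Big(\sup_{|x-x_0|<\varepsilon}|a_{\alpha}(t,x)-a_{\alpha}(t,x_0)|\Big)\|D^{\alpha}u(t,\cdot)\|_{X_0}\leq \omega(\varepsilon)\,[u(t,\cdot)]_{W^{m,q}(\R^d,w)},
\]
where we used that $\omega$ is increasing. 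Taking $L^p(\R,v)$-norms gives $\|(A(\cdot)-A(\cdot,x_0))u\|_{L^p(\R,v;X_0)}\leq \omega(\varepsilon)\|u\|_{\MRinfR}$; in particular $g\in L^p(\R,v;X_0)$.

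Since $u\in\MRinfR$ solves $u'+(\lambda+A(\cdot,x_0))u=g$, uniqueness in the definition of maximal $L^p_v$-regularity on $\R$ forces $u$ to be the strong solution associated with $g$, hence
\[
\lambda\|u\|_{L^p(\R,v;X_0)}+\|u\|_{\MRinfR}\leq C\|g\|_{L^p(\R,v;X_0)}\leq C\|f\|_{L^p(\R,v;X_0)}+C\omega(\varepsilon)\|u\|_{\MRinfR}.
\]
Because $C\omega(\varepsilon)\leq\tfrac12$ by the choice of $\varepsilon$, the last term is absorbed into the left-hand side, leaving $\lambda\|u\|_{L^p(\R,v;X_0)}+\tfrac12\|u\|_{\MRinfR}\leq C\|f\|_{L^p(\R,v;X_0)}$; multiplying by $2$ yields \eqref{eqfreezinglemmaho}.

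The argument is essentially routine; the two points that need care are that the maximal regularity constant of Step 1 is genuinely uniform in $x_0$ (which holds precisely because it depends only on the parameters in $\mathcal{P}$), and that the smallness $\omega(\varepsilon)\leq\tfrac{1}{2C}$ is exactly what legitimizes the absorption step. No $\R^d$-localization of $u$ is needed here, as the support hypothesis is already built into the statement.
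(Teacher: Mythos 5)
Your proposal is correct and follows essentially the same route as the paper: rewrite the equation with the frozen operator $A(\cdot,x_0)$, apply the Step~1 maximal regularity estimate (whose constant depends only on $\mathcal{P}$, hence is uniform in $x_0$), bound the commutator term by $\omega(\varepsilon)\|u\|_{\MRinfR}$ using the support condition, and absorb. The extra remark about uniqueness justifying the application of the a priori estimate to the given $u$ is a small careful touch but does not change the argument.
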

\begin{proof}
Let $f:=(\lambda+A)u+u'$ and observe that $u'+(A(\cdot,x_0)+\lambda)u=f+(A(\cdot,x_0)-A)u$. By \eqref{eqmaxregspacehigherorder2}, we find
\begin{align*}
\lambda\|u\|_{L^{p}(\R, v; X_0)}+\|u\|_{\MRinfR} &\leq C\|f\|_{ L^{p}(\R, v; X_0)}+C\|(A(\cdot,x_0)-A)u\|_{ L^{p}(\R, v; X_0)}.
\end{align*}
Note that by the support condition on $u$ and the continuity of $x\mapsto a_{\alpha}(\cdot,x)$,
\[\|(A(t,x_0)-A(t))u(t)\|_{X_0} \leq \omega(\varepsilon) \|u(t)\|_{X_1}.\]
Therefore, $C\|(A(\cdot,x_0)-A)u\|_{ L^{p}(\R, v; X_0)}\leq \frac12 \|u\|_{\MRinfR}$ and hence
\begin{align*}
\lambda\|u\|_{ L^{p}(\R, v; X_0)}+\|u\|_{\MRinfR} &\leq C\|f\|_{ L^{p}(\R, v; X_0)}+\frac12 \|u\|_{\MRinfR}.
\end{align*}
and the result follows from this.
\end{proof}

\medskip

{\em Step 3:}
In this step we use a localization argument in the case $p=q$ to show that there is a constant $C$ such that for all $u\in \MRinfR$,
\begin{equation}\label{eq:aprioriestlocalization}
\lambda\|u\|_{ L^{q}(\R, v; X_0)}+\|u\|_{L^q(\R,v;X_1)} \leq C  \|(\lambda+A)u+u'\|_{ L^{q}(\R,v;X_0)}.
\end{equation}

(a) Take a $\phi\in C^{\infty}(\R^d)$ with $\phi\geq 0$, $\|\phi\|_{L^{q}(\R^{d})}=1$ and support in the ball $B_{\varepsilon}=\{x:|x|<\varepsilon\}$ where $\varepsilon>0$ is as in Lemma \ref{freezinglemmaho}. Note that
\begin{equation}\label{eqpartitionunityho}
|\nabla^{m}u(t,x)|^{q}=\int_{\R^{d}}|\nabla^{m}u(t,x) \phi(x-\xi)|^{q} d\xi.
\end{equation}
By the product rule, we can write
\[\nabla^{m}[u(t,x)\phi(x-\xi)]=\nabla^{m}u(t,x)\cdot\phi(x-\xi)
+ \sum_{|\alpha|\leq m-1} c_\alpha D^{\alpha}u(t,x)D^{g(\alpha)}\phi(x-\xi),
\]
with $|g(\alpha)|\leq m$ and $c_{\alpha}\geq 0$. Therefore,
\[
|\nabla^{m}u(t,x)\cdot\phi(x-\xi)|\leq |\nabla^{m}[u(t,x)\phi(x-\xi)]\\
+\tilde{C}\sum_{|\alpha|\leq m-1}|D^{\alpha}u(t,x)|,\]
where used $\sum_{|\alpha|\leq m-1}c_{\alpha}|D^{g(\alpha)}\phi(x)| \leq \tilde{C}$.
Taking $L^q(\R,v)$-norms on both sides gives
\begin{equation}\label{eq:productruleA}
\begin{aligned}
\|\nabla^{m}u\|_{L^q(\R,v;X_0)}
&=\biggl(\int_{\R^{d}}\|\nabla^{m}u  \phi(\cdot-\xi) \|_{L^q(\R;X_0)}^q d\xi\biggr)^{1/q}\\
&\leq \biggl(\int_{\R^{d}}\|\nabla^{m}(u\phi(\cdot-\xi))\|_{L^q(\R,v;X_0)}^{q} d\xi\biggr)^{1/q} + L,
\end{aligned}
\end{equation}
where $L = \tilde{C}\sum_{|\alpha|\leq m-1}\|D^{\alpha}u\|_{L^q(\R,v;X_0)}$. For each fixed $\xi$ in the case $p=q$, Lemma \ref{freezinglemmaho} applied to $x\mapsto u(t,x)\phi(x-\xi)$ yields
\begin{equation}\label{eq:inbetweennablaA}
\begin{aligned}
\|&\nabla^{m}(u(t)\phi)\|_{L^q(\R,v;X_0)} \leq \biggl(\int_{\R^{d}}\|\nabla^{m}(u \phi(\cdot-\xi))\|_{L^q(\R,v;X_0)}^{q}d\xi\biggr)^{1/q} + L
\\ & \leq C \Big(\int_{\R^d} \| (\lambda+A)(u\phi(\cdot-\xi))+u'\phi(\cdot-\xi)\|_{L^q(\R,v;X_0)}^q \, d\xi\Big)^{\frac1q} + L,
\end{aligned}
\end{equation}
Note that for each $\xi\in \R^d$,
\begin{align*}
(\lambda+A) (u \phi(\cdot-\xi))  &= \sum_{|\alpha|=m}a_{\alpha} D^{\alpha}[u \phi(\cdot-\xi)]+\lambda u \phi(\cdot-\xi)\\
&= (\lambda+A)u \cdot \phi(\cdot-\xi) + \sum_{|\alpha|\leq m-1}c_{\alpha}a_{\alpha}D^{\alpha}u D^{g(\alpha)}\phi(\cdot-\xi).
\end{align*}
Thus we also have
\begin{align*}
\Big(\int_{\R^d} & \|(\lambda+A) (u \phi(\cdot-\xi)) + u'\phi(\cdot-\xi)\|_{L^q(\R,v;X_0)}^q \, d\xi\Big)^{\frac1q}
\\ &\leq  \Big(\int_{\R^d} \|[(\lambda+A)u  +u']\phi(\cdot-\xi)\|_{L^q(\R,v;X_0)}^q \, d\xi\Big)^{\frac1q} + K L.
\\ &=  \|(\lambda+A)u  +u'\|_{L^q(\R,v;X_0)} + K L.
\end{align*}
Combining the latter with \eqref{eq:productruleA} and \eqref{eq:inbetweennablaA} gives
\begin{align*}
\|\nabla^{m}u\|_{L^q(\R,v;X_0)} \leq C \|(\lambda+A)u  +u'\|_{L^q(\R,v;X_0)} + (K+1) L,
\end{align*}
where $K$ is as in condition (C). We may conclude that
\begin{equation}\label{eq:Weststep}
\|u\|_{L^q(\R,v;X_1)} \leq C\|(\lambda+A)u  +u'\|_{L^q(\R,v;X_0)}  + C \|u\|_{L^q(\R,v;W^{m-1,q}(\R^d,w))}.
\end{equation}

To include the lower order terms, let
\[B(t) u(t,x) = \sum_{|\alpha|\leq m-1} a_{\alpha}(t,x) D^\alpha u.\]
By \eqref{eq:Weststep} with $f = (A + B +\lambda)u+u'$ and the triangle inequality, we find
\begin{equation}\label{eq:Weststep2}
\|u\|_{L^q(\R,v;X_1)} \leq C\|f\|_{L^q(\R,v;X_0)}  + C(K+1) \|u\|_{L^q(\R,v;W^{m-1,q}(\R^d,w))}.
\end{equation}

In a similar way, one sees that for all $\lambda\geq 1$
\begin{equation}\label{eq:lambdaeststep}
\lambda \|u\|_{L^q(\R,v;X_0)} \leq C\|f\|_{L^q(\R,v;X_0)}  + C(K+1) \|u\|_{L^q(\R,v;W^{m-1,q}(\R^d,w))}.
\end{equation}
In order to obtain  \eqref{eq:aprioriestlocalization} from  \eqref{eq:Weststep2} and \eqref{eq:lambdaeststep} note that it follows from the interpolation inequality from Corollary \ref{cor:WandIntp} that
for all $\nu>0$
\begin{equation}\label{eq:uinterp}
\|u\|_{W^{m-1,q}(\R^d,w)} \leq C \nu^{m-1} \|u\|_{L^q(\R^d,w)} + C \nu^{-1} \|u\|_{W^{m,q}(\R^d,w)}.
\end{equation}
Therefore, choosing $\nu$ small enough we can combine the latter with \eqref{eq:lambdaeststep} to obtain
\begin{align*}
\lambda \|u\|_{L^q(\R,v;X_0)} + \|u\|_{L^q(\R,v;X_1)}  & \leq C\|f\|_{L^q(\R,v;X_0)}\\ & \ \ \   + \frac12 \|u\|_{L^q(\R,v;X_1)} + C_{\nu}\|u\|_{L^q(\R,v;X_0)}.
\end{align*}
Setting $\lambda_0 =\max\{2C_{\nu}, 1\}$, it follows that for all $\lambda\geq \lambda_0$,
\begin{align*}
\frac12\lambda \|u\|_{L^q(\R,v;X_0)} + \frac12\|u\|_{L^q(\R,v;X_1)}  \leq C\|f\|_{L^q(\R,v;X_0)}.
\end{align*}
This clearly implies \eqref{eq:aprioriestlocalization}.

\medskip

{\em Step 4:} To extrapolate the estimate from the previous step to $p\neq q$, let $u:\R\to X_1$ be a Schwartz function. Then by \eqref{eq:aprioriestlocalization} we have for all $v\in A_q$ there exists $A_q$-consistent constants $\lambda_0, C>0$ such that for all $\lambda\geq \lambda_0$
\[\|F_{\lambda}\|_{L^q(\R,v)} \leq C  \|G_{\lambda}\|_{ L^{q}(\R,v)},\]
where $F_{\lambda} = \|u\|_{X_1}$, $G_{\lambda} = \|(\lambda+A)u+u'\|_{X_0}$. Therefore, by the extrapolation result Theorem \ref{thm:baseweight} it follows that for all $v\in A_p$ there exist a $A_p$-consistent constants $\lambda_0'$ and $C'$ such that for all $\lambda\geq \lambda_0'$,
\[\|F_{\lambda}\|_{L^p(\R,v)} \leq C'  \|G_{\lambda}\|_{ L^{p}(\R,v)},\]
This yields
\[\|u\|_{L^p(\R,v;X_1)}\leq C' \|(\lambda+A)u+u'\|_{L^{p}(\R, v;X_0)}.\]
Similarly, one proves the estimate for $\lambda\|u\|_{L^p(\R,v;X_0)}$. As $u' = (\lambda+A)u + u' - (\lambda+A)u$, \eqref{eqmaxregspacehigherorder2} with righthand side $f = (\lambda+A)u + u'$ follows.

\medskip

{\em Step 5:} Let $A$ be as in the theorem. For $s\in [0,1]$ let $A_s = s A + (1-s) (-\Delta)^{m/2}$, where we recall that $m$ is even. Then $A_s$ satisfies condition (C) with constants $\kappa$ and $K$ replaced by $\min\{\kappa,1\}$ and $\max\{K,1\}$, respectively. Therefore,
for all $\lambda\geq \lambda_0$, \eqref{eqmaxregspacehigherorder2} holds with right-hand side $f =  (\lambda + A_s)u+u'$ with a constant $C$ which does not dependent on $s$. For $s=0$ for all $\lambda\geq \lambda_0$, for every $f\in L^p(\R,v;X_0)$, one has existence and uniqueness of a strong solution $u\in \MRinfR$ to $u'+ (\lambda + A_s) u = f$ by step 1. Therefore, the method of continuity (see \cite[Theorem 5.2]{GilTru}) yields existence and uniqueness of a strong solution for every $s\in [0,1]$. Taking $s=1$, the required result follows and this completes the proof of Theorem \ref{teomaxreghigherorder}.

\begin{remark}\label{rem:domainch}
In the proof of Theorem \ref{teomaxreghigherorder} we applied Corollary \ref{cor:domainch} only for the case of $x$-independent coefficients. It is rather simple to derive Corollary \ref{cor:domainch} with $x$-dependent coefficients from Theorem \ref{teomaxreghigherorder} (cf. \cite[Exercise 4.3.13]{krylov}).
Indeed, let $A$ be $t$-independent but such that (C) holds and let $u\in W^{m,q}(\R^d,w)$. Applying \eqref{eqmaxregspacehigherorder2} to $\tilde{u}:\R\to X_1$ given by $\tilde{u}(t) = e^{-\mu |t|} u$ with $\mu>0$ and $v=1$, and letting $\mu\downarrow 0$, we find
\[\lambda \|u\|_{L^q(\R^d,w)} + \|u\|_{W^{m,q}(\R^d,w)} \leq C \|(\lambda +A) u\|_{L^q(\R^d,w)}.\]
\end{remark}

Finally we show how to derive Theorem \ref{thm:2mintro} from Theorem \ref{teomaxreghigherorder}.
\begin{proof}[Proof of Theorem \ref{thm:2mintro}]
By Theorem \ref{teomaxreghigherorder} there is a $\lambda\in \R$ such that $\lambda+A$ has maximal $L^p$-regularity on $\R$ and hence on $(0,T)$ as well. By the observation after Definition \ref{def:MR} this implies that $A$ has maximal $L^p$-regularity on $(0,T)$ and hence we can find a unique solution
\[u\in W^{1,p}(0,T;L^q(\R^d))\cap L^p(0,T;W^{m,q}(\R^d))\]
of \eqref{eq:introCauchy} with $u_0=0$.
By Proposition \ref{prop:initialvalue} with $v\equiv 1$, we can allow nonzero initial values $u_0\in X_{v,p} = (L^q(\R^d), W^{m,q}(\R^d))_{1-\frac1p,p}$ (see Example \ref{ex:chtrace}). By \cite[Theorem 6.2.4]{BergLof} or \cite[Remark 2.4.2.4]{Tr1} this real interpolation space can be identified with $B^{s}_{q,p}(\R^d)$ with $s= m(1-1/p)$. Finally, the fact that $u\in C([0,T];B^{s}_{q,p}(\R^d))$ follows from Example \ref{ex:chtrace} as well.
\end{proof}

In the next remark we compare Theorems \ref{thm:2mintro} and \ref{teomaxreghigherorder} to part of the literature on such equations.
\begin{remark}\label{rem:discussionHHKK}
\
\begin{enumerate}
\item In the case $A$ is time-independent and $v\equiv 1$, Theorem \ref{teomaxreghigherorder}
reduces to \cite[Theorem 3.1]{HHH} in case of scalar equations.
\item In a series of papers by Kim and Krylov several maximal $L^p$-regularity results for \eqref{eq:introCauchy} have been derived under VMO conditions on the coefficients. In \cite[Theorem 4.3.8]{krylov} the case $p=q$ and $m=2$ has been considered under the same assumptions under the coefficients.
Extensions to the case $1<q\leq p<\infty$ have been given in \cite{Kim08} and \cite[Chapter 7]{krylov}.
Here only VMO conditions in the space variable are required. In the $x$-independence case results for $p,q\in (1, \infty)$ can be found in \cite{Kim10pot,Krylovheat}. For further results and references in the case $p=q$ we refer to \cite{DK11}.
\end{enumerate}
\end{remark}

In future works we will consider other examples and applications of the above methods:
\begin{remark}\
\begin{enumerate}
\item One can extend Theorem \ref{teomaxreghigherorder} to systems of equations. This is more complicated as the evolution family is not explicitly given in this situation. This will be addressed in future works.
\item With standard methods one can extend the result of Theorem \ref{teomaxreghigherorder} to half spaces and domains. This will be presented elsewhere.
\item The same method gives new information on stochastic maximal $L^p$-regularity for SPDEs with coefficients which depend on time in a measurable way. The case of continuous dependence on time was considered in \cite{NVW-SIAM}.
\end{enumerate}
\end{remark}

\section{Quasilinear evolution equations\label{sec:Quasi}}
In this section we illustrate how the results of the paper can be used to study nonlinear PDEs. We extend the result of \cite{CleLi} and \cite{Pruss02} (see \cite{KPW} for the weighted setting) to the case of time-dependent operators $A$ without continuity assumptions. Our proof slightly differs from the previous ones since we can immediately deal with the nonautonomous setting. For notational simplicity we consider the unweighted setting only.

\subsection{Abstract setting}
Let $X_0$ be a Banach space and $X_1\hookrightarrow X_0$ densely, $0<T\leq T_0<\infty$, $J=[0,T]$, $J_0=[0,T_0]$ and $p\in (1,\infty)$. Let $X_p =(X_0,X_1)_{1-\frac{1}{p},p}$ equipped with the norm from \eqref{eq:Xvp}. Consider the quasi-linear problem
\begin{equation}\label{prob:quasilinear}
\begin{cases}
u'(t)+A(t,u(t))u(t)=F(t,u(t)), & t\in J\\
u(0)=x.
\end{cases}
\end{equation}
where $x\in X_p$ and
\begin{itemize}
\item $A:J_0\times X_p \rightarrow \mathcal{L}(X_1,X_0)$ is such that for each $y\in X_1$ and $x\in X_p$, $t\rightarrow A(t,x)y$ is strongly measurable and satisfies the following continuity condition: for each $R>0$ there is a constant $C(R)>0$ such that
\begin{equation}\label{eq:assAquasi}
\|A(t,x_1)y-A(t,x_2)y\|_{X_0}\leq C(R)\|x_1-x_2\|_{X_{p}}\|y\|_{X_1},
\end{equation}
with $t\in J_0,\ x_1,x_2\in X_{p},\ \|x_1\|_{X_{p}},\|x_2\|_{X_{p}}\leq R,\ y\in X_1$.
\end{itemize}
\begin{itemize}
\item $F:J_0\times X_p\rightarrow X_0$ is such that $F(\cdot,x)$ is measurable for each $x\in X_p$, $F(t,\cdot)$ is continuous for a.a.\ $t\in J_0$ and $F(\cdot,0)\in L^{p}(J_0;X_0)$ and  $F$ satisfies the following condition on Lipschitz continuity: for each $R>0$ there is a function $\phi_R\in L^{p}(J_0)$ such that
\[
\|F(t,x_1)-F(t,x_2)\|_{X_0}\leq \phi_R(t)\|x_1-x_2\|_{X_{p}},
\]
for a.a. $t\in J_0,\ x_1,x_2\in X_{p},\ \|x_1\|_{X_{p}},\|x_2\|_{X_{p}}\leq R$.
\end{itemize}

\begin{theorem}\label{thm:quasilinear}
Assume the above conditions on $A$ and $F$. Let $x_0\in X_p$ and assume that $A(\cdot,x_0)$ has maximal $L^p$-regularity. Then there is a $T\in (0,T_0]$ and radius $\varepsilon>0$ both depending on $x_0$ such that for all $x\in B_\varepsilon = \{y\in X_p: \|y-x_0\|_{X_p}\leq \varepsilon\}$, \eqref{prob:quasilinear} admits a unique solution $u\in {\rm MR}(J):=W^{1,p}(J;X_0)\cap L^{p}(J;X_1)$. Moreover, there is a constant $C$ such that for all $x,y\in B_{\varepsilon}$ the corresponding solutions $u^x$ and $u^y$ satisfy
\[\|u^x - u^y\|_{{\rm MR}(J)}\leq C\|x-y\|_{X_p}.\]
\end{theorem}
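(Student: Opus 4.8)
The plan is to prove this by a contraction-mapping argument on a ball in ${\rm MR}(J)=W^{1,p}(J;X_0)\cap L^{p}(J;X_1)$ centred at a reference function, with Proposition~\ref{prop:maxregsmallB} used to transfer maximal regularity from $A(\cdot,x_0)$ to the frozen-coefficient operators $A(\cdot,v(\cdot))$, and Proposition~\ref{prop:initialvalue} used to accommodate the initial value $x$. First I would fix, once and for all, a radius $R>0$ large enough (depending on $x_0$), which determines $\phi_R\in L^p(J_0)$ and the constant $C(R)$ from \eqref{eq:assAquasi}; write $C_A$ for a maximal $L^p$-regularity constant of $A(\cdot,x_0)$, valid on $J_0$ and hence on every $J=[0,T]\subseteq J_0$. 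Let $u_\ast^x\in{\rm MR}(J_0)$ be the solution of the homogeneous linear problem $(u_\ast^x)'+A(t,x_0)u_\ast^x=0$, $u_\ast^x(0)=x$; this exists and is unique by Proposition~\ref{prop:initialvalue} (note that $X_p$ coincides with $X_{v,p}$ for $v\equiv1$), the map $x\mapsto u_\ast^x$ is affine, and $\|u_\ast^x-u_\ast^{x_0}\|_{{\rm MR}(J_0)}\le C\|x-x_0\|_{X_p}$. For parameters $r,\varepsilon>0$ and $T\in(0,T_0]$ still to be chosen, and $x\in B_\varepsilon$, put
\[
\Sigma_x:=\{v\in{\rm MR}(J):\ v(0)=x,\ \|v-u_\ast^x\|_{{\rm MR}(J)}\le r\},
\]
a nonempty closed subset of ${\rm MR}(J)$, and define $\Phi=\Phi_x$ on $\Sigma_x$ by letting $\Phi(v)$ be the solution of the \emph{linear} nonautonomous problem $(\Phi(v))'+A(t,v(t))\Phi(v)=F(t,v(t))$ on $J$ with $\Phi(v)(0)=x$. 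A fixed point of $\Phi$ is exactly a solution of \eqref{prob:quasilinear} lying in $\Sigma_x$.

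To see that $\Phi$ is well defined, note that $t\mapsto F(t,v(t))\in L^p(J;X_0)$ because $F(\cdot,0)\in L^p(J_0;X_0)$ and $\|F(t,v(t))-F(t,0)\|_{X_0}\le\phi_R(t)\|v(t)\|_{X_p}$, using ${\rm MR}(J)\hookrightarrow C(J;X_p)$. For the operator, set $B_v(t):=A(t,v(t))-A(t,x_0)$, so that $A(\cdot,v(\cdot))=A(\cdot,x_0)|_J+B_v$ and, by \eqref{eq:assAquasi}, $\|B_v(t)y\|_{X_0}\le C(R)\|v(t)-x_0\|_{X_p}\|y\|_{X_1}$ whenever $\|v(t)-x_0\|_{X_p}\le R$. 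Since $v(0)=x=u_\ast^x(0)$, the difference $v-u_\ast^x$ has zero trace at $0$, so $\sup_{t\in J}\|v(t)-u_\ast^x(t)\|_{X_p}\le C_0\|v-u_\ast^x\|_{{\rm MR}(J)}\le C_0r$, where crucially $C_0$ may be chosen \emph{independent of $T$} (the trace embedding ${\rm MR}(J)\hookrightarrow C(J;X_p)$ has a $T$-uniform constant on the zero-trace subspace, which is the ``$\varepsilon=0$'' phenomenon noted in Example~\ref{ex:chtrace}). Combining with $\sup_J\|u_\ast^x-u_\ast^{x_0}\|_{X_p}\le C\varepsilon$ and $\delta_0(T):=\sup_{[0,T]}\|u_\ast^{x_0}(t)-x_0\|_{X_p}\to0$ as $T\to0$ (by continuity of $u_\ast^{x_0}$), one gets $\sup_J\|v(t)-x_0\|_{X_p}\le C_0r+C\varepsilon+\delta_0(T)$. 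Choosing $r,\varepsilon,T$ small keeps this $\le R$ and makes $C(R)(C_0r+C\varepsilon+\delta_0(T))$ smaller than the perturbation threshold of Proposition~\ref{prop:maxregsmallB}; hence $A(\cdot,v(\cdot))$ has maximal $L^p$-regularity on $J$ with a constant $C_A'$ depending only on $C_A$, uniformly over $v\in\Sigma_x$ and $x\in B_\varepsilon$. By Proposition~\ref{prop:initialvalue} the solution $\Phi(v)$ then exists and is unique.

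For the self-map property, $w:=\Phi(v)-u_\ast^x$ has zero trace at $0$ and solves $w'+A(t,v(t))w=F(t,v(t))-B_v(t)u_\ast^x(t)$, so the maximal regularity estimate with vanishing initial value gives
\[
\|w\|_{{\rm MR}(J)}\le C_A'\Big(\|F(\cdot,0)\|_{L^p(0,T;X_0)}+\|\phi_R\|_{L^p(0,T)}\sup_J\|v\|_{X_p}+C(R)\big(\sup_J\|v-x_0\|_{X_p}\big)\|u_\ast^x\|_{L^p(0,T;X_1)}\Big),
\]
and since $\|F(\cdot,0)\|_{L^p(0,T;X_0)}$, $\|\phi_R\|_{L^p(0,T)}$ and $\|u_\ast^x\|_{L^p(0,T;X_1)}$ all tend to $0$ as $T\to0$, the right-hand side is $\le r$ for $T$ small, so $\Phi(\Sigma_x)\subseteq\Sigma_x$. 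For the contraction, $\Phi(v_1)-\Phi(v_2)$ has zero trace and solves an equation with coefficient $A(\cdot,v_1(\cdot))$ and right-hand side $[t\mapsto F(t,v_1(t))-F(t,v_2(t))]-[t\mapsto(A(t,v_1(t))-A(t,v_2(t)))\Phi(v_2)(t)]$; using the Lipschitz bounds on $F$ and on $A$, the $T$-uniform zero-trace embedding, and $\|\Phi(v_2)\|_{L^p(0,T;X_1)}\le r+\|u_\ast^x\|_{L^p(0,T;X_1)}$, one obtains a Lipschitz constant $C_A'C_0\big(\|\phi_R\|_{L^p(0,T)}+C(R)(r+\|u_\ast^x\|_{L^p(0,T;X_1)})\big)$, which is $<1$ once $r$ and then $T$ are chosen small. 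The Banach fixed point theorem then produces a unique $u^x\in\Sigma_x$ solving \eqref{prob:quasilinear}.

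It remains to upgrade uniqueness to all of ${\rm MR}(J)$ and to establish Lipschitz dependence. For the former, a standard connectedness argument works: any ${\rm MR}(J)$-solution $\tilde u$ is continuous with $\tilde u(0)=x$, hence lies in $\Sigma_x$ on a short initial subinterval, on which it must coincide with $u^x$, and the set of $t\in J$ with $\tilde u=u^x$ on $[0,t]$ is nonempty, closed and open. For Lipschitz dependence, with $x,y\in B_\varepsilon$ the difference $w:=u^x-u^y$ solves $w'+A(t,u^x(t))w=[t\mapsto F(t,u^x(t))-F(t,u^y(t))]-[t\mapsto(A(t,u^x(t))-A(t,u^y(t)))u^y(t)]$ with $w(0)=x-y$; applying maximal regularity of $A(\cdot,u^x(\cdot))$ together with Proposition~\ref{prop:initialvalue}, and estimating $\sup_J\|w\|_{X_p}$ by splitting $w$ into its free-evolution part (with trace $x-y$, bounded in $C(J;X_p)$ by $C\|x-y\|_{X_p}$ with a constant fixed on $J_0$) and a zero-trace remainder (controlled by $C_0\|w\|_{{\rm MR}(J)}$), one gets for $T$ small a self-improving inequality that yields $\|u^x-u^y\|_{{\rm MR}(J)}\le C\|x-y\|_{X_p}$. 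The main difficulty here is not conceptual but organizational: $R$ (hence $C(R),\phi_R,C_A,C_A',C_0$) must be fixed first, then $r$, then $\varepsilon$, then $T$, so that simultaneously $B_v$ stays below the perturbation threshold of Proposition~\ref{prop:maxregsmallB}, the radius $r$ is preserved by $\Phi$, and the contraction constant is $<1$; this works precisely because $\|\phi_R\|_{L^p(0,T)}$, $\|F(\cdot,0)\|_{L^p(0,T;X_0)}$, $\|u_\ast^x\|_{L^p(0,T;X_1)}$ and $\delta_0(T)$ all vanish as $T\to0$ while $C_0$ does not depend on $T$.
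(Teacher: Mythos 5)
Your proposal is essentially the paper's proof: a contraction-mapping argument built on the $T$-uniform trace embedding for zero-trace functions (Example~\ref{ex:chtrace}), Proposition~\ref{prop:maxregsmallB} to transfer maximal regularity from $A(\cdot,x_0)$ to the frozen operator $A(\cdot,v(\cdot))$, and Proposition~\ref{prop:initialvalue} to absorb the initial value, with the same parameter-ordering ($R$, then $r$, then $\varepsilon$, then $T$). The two differences are cosmetic. First, you centre your fixed-point ball $\Sigma_x$ at the $x$-dependent free evolution $u_*^x$ solving $(u_*^x)'+A(t,x_0)u_*^x=0$, $u_*^x(0)=x$, so that $v-u_*^x$ always has zero trace; the paper instead centres its ball $\mathbb{B}_{r,x}$ at the fixed reference $w^{x_0}$ (solving $w'+A(t,x_0)w=F(t,x_0)$, $w(0)=x_0$) and carries the nonzero trace $x-x_0$ through separately via \eqref{eq:conseqhandy}. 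Your choice keeps the zero-trace mechanism cleaner; the paper's choice keeps the reference function independent of $x$, which streamlines bookkeeping. Second, for the Lipschitz dependence on $x$, you run a self-improving inequality after splitting $u^x-u^y$ into a free-evolution part and a zero-trace remainder; the paper reads the estimate off directly from its contraction inequality \eqref{eq:contractionL} applied at the fixed points, $\|u^{x_1}-u^{x_2}\|_{\mathrm{MR}}=\|L_{x_1}(u^{x_1})-L_{x_2}(u^{x_2})\|_{\mathrm{MR}}\le K_1\|x_1-x_2\|_{X_p}+\tfrac12\|u^{x_1}-u^{x_2}\|_{\mathrm{MR}}$, which is a bit shorter but equivalent. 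Your extra connectedness argument for uniqueness in all of $\mathrm{MR}(J)$ (rather than just in the ball) goes beyond what the paper writes explicitly; the sketch is fine in spirit, though the ``open'' step would need a restart of the contraction at an interior time $t^*$ with the new initial value $u^x(t^*)$, which still lies close to $x_0$ by your construction.
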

The proof will be given in Section \ref{subsec:proofthmquasi}.

\subsection{Example of a quasilinear second order equation}

Let $T_0>0$ and $J_0 = [0,T_0]$. In this section we will give conditions under which there exists a local solution of the problem:
\begin{equation}\label{eqparprobhigherorderquasi}
u'(t,x)+\sum_{|\alpha|=2}a_{\alpha}(t,x, u(t,x), \nabla u(t,x))D^{\alpha} u(t,x)=f(t,x,u(t,x), \nabla u(t,x)),
\end{equation}
with initial value $u(0,x) = u_0(x)$, $t\in J_0$, $x\in \R^d$ and where $D_{j}:=-i\frac{\partial}{\partial_{j}}$.
The main new feature here is that the above functions $a_{\alpha}$ are only measurable in time. Note that possible lower order terms $a_{\alpha}$ can be included in $f$.  We will provide an $L^p(L^q)$-theory for \eqref{eqparprobhigherorderquasi} under the following conditions on $p$ and $q$:
\begin{enumerate}[(i)]
\item Let $X_0 = L^q(\R^d), \ X_1 = W^{2,q}(\R^d), \ X_p = B^{2(1-\frac1p)}_{q,p}(\R^d)$ where $p,q\in (1, \infty)$ satisfy
\begin{equation}\label{eq:Sobolevexp}
2\Big(1-\frac{1}{p}\Big) - \frac{d}{q}>1.
\end{equation}
\end{enumerate}
This condition is to ensure the following continuous embedding holds (see \cite[Theorem 2.8.1]{Tr1})
\begin{equation}\label{eq:Sobolev}
B^{2(1-\frac1p)}_{q,p}(\R^d)\hookrightarrow C^{1+\delta}(\R^d), \ \ \ \text{for all} \ 0<\delta<2\Big(1-\frac{1}{p}\Big) - \frac{d}{q} - 1.
\end{equation}
Also note that $B^{2(1-\frac1p)}_{q,p}(\R^d) = (X_0, X_1)_{1-\frac1p,p}$ by \cite[2.4.2(16)]{Tr1}.

On $a$ and $f$ we assume the following conditions:
\begin{enumerate}[(i)]
\setcounter{enumi}{1}
\item Assume each $a_{\alpha}:J_0\times\R^d\times \R\times \R^d\to \C$ is a measurable function such that $\sup_{t,x,y,z}|a_{\alpha}(t,x,y,z)|<\infty$ and
there is an $\theta\in (0,\pi)$ and $\kappa\in (0,1)$ such that for all $t\in J_0,\ x,z\in \R^d, \ y\in \R$,
\begin{equation}
\sum_{|\alpha|=2} a_{\alpha}(t,x,y, z) \xi^{\alpha}\subset\Sigma_{\theta}\ {\rm and}\ \Big|\sum_{|\alpha|=2}a_{\alpha}(t,x,y,z) \xi^{\alpha}\Big|\geq\kappa,\ \ \ \xi\in\R^{d},\ |\xi|=1.
\nonumber
\end{equation}
\item Assume that for every $R>0$ there exists a function $\omega_R:\R_+\to \R_+$ with $\lim_{\varepsilon\downarrow 0}\omega_R(\varepsilon) = 0$ such that for all $t\in J_0,\ x_1, x_2\in \R^d,\ |y|, |z|\leq R$,
\[|a_{\alpha}(t,x_1,y,z)- a_{\alpha}(t,x_2,y,z)|\leq \omega_R(|x_1-x_2|).\]

\item Assume that for each $|\alpha|=2$ for every $R>0$ there exists a constant $C_{\alpha}(R)$ such that for all $t\in J_0,\ x\in \R^d,\ |y_1|, |y_2|\leq R$, and $|z_1|, |z_2|\leq R$,
\begin{equation}\label{eq:locaLip}
|a_{\alpha}(t,x, y_1, z_1)-a_{\alpha}(t,x, y_2, z_2)|\leq C_{\alpha}(R)(|y_1-y_2| + |z_1-z_2|),
\end{equation}

\item Assume $f:J_0\times\R^d\times\R\times\R^d\to \C$ is a measurable function such that
\[\int_{J_0} \Big(\int_{\R^d} |f(t,x, 0, 0)|^q \, dx\Big)^{\frac{p}{q}} \, dt <\infty.\]
For every $R>0$ there exists a function $\phi(R)\in L^p(J_0)$ such that for all $t\in J_0$, $x\in \R^d$, $|y_1|, |y_2|\leq R$ and $|z_1|, |z_2|\leq R$,
\[|f(t,x, y_1, z_1)-f(t,x, y_2, z_2)|_{X_0}\leq \phi(R)(t)(|y_1-y_2| + |z_1-z_2|).\]
\end{enumerate}

Let $\text{MR}^p(J) = W^{1,p}(J;L^q(\R^d))\cap L^p(J;W^{2,q}(\R^d))$ and note that by \eqref{eq:Sobolev}
\[\text{MR}^p(J) \hookrightarrow C(J;X_p)\hookrightarrow C(J;C^{1+\delta}(\R^d)).\]
In order to apply Theorem \ref{thm:quasilinear} to obtain local well-posedness define $A:J_0\times X_p\to \calL(X_1, X_0)$ and $F:J_0\times X_p\to X_0$ by
\begin{align*}
(A(t,v) u)(x)& =\sum_{|\alpha|\leq 2}a_{\alpha}(t,x, v(x), \nabla v(x))D^{\alpha} u(x),
\\ F(t,u)(x)& =f(t,x, u(x), \nabla u(x)).
\end{align*}
Then $A$ and $F$ satisfy the conditions of Theorem \ref{thm:quasilinear}. Indeed, applying \eqref{eq:locaLip}  we find that for $R>0$ and $\|v_1\|_{X_p}, \|v_2\|_{X_p}\leq R$ and $u\in X_1$,
\begin{align*}
\|A(t,v_1)u - A(t,v_2)u\|_{X_0} & \leq K(R) \Big(\|v_1 - v_2\|_{X_0} + \|\nabla v_1 - \nabla v_2\|_{X_0} \Big)  \|u\|_{X_1}
\\ & \leq K(R) C\|v_1-v_2\|_{X_p} \|u\|_{X_1}.
\end{align*}
Here we have used that for $k\in \{0,1\}$ and $v\in X_p$, $\|D^{k} v\|_{\infty}\leq C \|v\|_{X_p}$ by \eqref{eq:Sobolev}.

Next we check that for every $g\in X_p$, $A(\cdot,g)$ has maximal $L^p$-regularity.  In order to do so we check that $A(t,g)$ satisfies the conditions of Theorem \ref{teomaxreghigherorder}. Indeed, let $R = \|g\|_{C^1(\R^d)}<\infty$. By \eqref{eq:Sobolev}, $g\in C^{1+\delta}(\R^d)$ and therefore,
\begin{align*}
 |a_{\alpha}& (t,x_1,g(x_1),\nabla g(x_1)) -  a_{\alpha}(t,x_2,g(x_2),\nabla g(x_2))| \\ & \leq \ \ \ |a_{\alpha}(t,x_1,g(x_1),\nabla g(x_1)) -  a_{\alpha}(t,x_2,g(x_1),\nabla g(x_1))| \\ & \ \ +  \  |a_{\alpha}(t,x_2,g(x_1),\nabla g(x_1)) -  a_{\alpha}(t,x_2,g(x_2),\nabla g(x_2))|
\\ & \leq \omega_{R}(|x_1-x_2|) + |g(x_1) - g(x_2)| + |\nabla g(x_1) - \nabla g(x_2)|
\\ & \leq \omega_{R}(|x_1-x_2|) + \|g\|_{C^{1+\delta}(\R^d)} (|x_1-x_2|  + |x_1-x_2|^{\delta}).
\end{align*}
Thus $A(t,g)$ satisfies the required continuity condition in the space variable. Hence Theorem \ref{teomaxreghigherorder} yields that $A(\cdot, u_0)$ has maximal $L^p$-regularity. The conditions on $F$ can be checked in a similar way and we obtain the following result as a consequence of Theorem \ref{thm:quasilinear}.

\begin{theorem}\label{thm:quasilinearappl}
Assume the above conditions on $p,q\in (1, \infty)$ and $a_{\alpha}$ and $f$. Let $g\in X_p := B^{2(1-\frac1p)}_{q,p}(\R^d)$ be arbitrary.
Then there is a $T\in (0,T_0]$ and radius $\varepsilon>0$ both depending on $g$ such that for all $u_0\in B_\varepsilon = \{v\in X_p: \|v-g\|_{X_p}\leq \varepsilon\}$, \eqref{eqparprobhigherorderquasi} admits a unique solution
\[u\in W^{1,p}(J;L^q(\R^d))\cap L^{p}(J;W^{2,q}(\R^d))\cap C(J;X_p).\]
Moreover, there is a constant $C$ such that for all $u_0,v_0\in B_{\varepsilon}$ the corresponding solutions $u$ and $v$ satisfy
\[\|u - v\|_{W^{1,p}(J;L^q(\R^d))} + \|u - v\|_{L^{p}(J;W^{2,q}(\R^d))} + \|u - v\|_{C(J;X_p)} \leq C\|x-y\|_{X_p}.\]
\end{theorem}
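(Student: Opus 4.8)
The plan is to solve \eqref{prob:quasilinear} by a contraction mapping argument after freezing the operator at $x_0$. Put $A_{\#}:=A(\cdot,x_0)$; by hypothesis $A_{\#}$ has maximal $L^p$-regularity on $J_0$, hence on every $J=[0,T]$ with $T\le T_0$ (by the observation following Definition \ref{def:MR}), and the maximal regularity constant together with the trace embedding ${\rm MR}(J)\hookrightarrow C(J;X_p)$ can be taken uniform in $T\le T_0$ (the embedding for functions vanishing at $t=0$ via Example \ref{ex:chtrace} and a reflection argument). Rewriting \eqref{prob:quasilinear} as
\[
u'(t)+A_{\#}(t)u(t)=G(u)(t):=F(t,u(t))+\bigl(A_{\#}(t)-A(t,u(t))\bigr)u(t),\qquad u(0)=x,
\]
Proposition \ref{prop:initialvalue} applied to $A_{\#}$ (with $v\equiv1$, so $X_{v,p}=X_p$) gives, for each $z\in X_p$ and $g\in L^p(J;X_0)$, a unique $\mathcal S(z,g)\in{\rm MR}(J)$ solving $w'+A_{\#}w=g$, $w(0)=z$, with $\|\mathcal S(z,g)\|_{{\rm MR}(J)}\le M_1(\|z\|_{X_p}+\|g\|_{L^p(J;X_0)})$ and $M_1$ independent of $T\le T_0$. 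Set $w^x:=\mathcal S(x,0)$ and $\Lambda(v):=\mathcal S(x,G(v))$; a fixed point of $\Lambda$ in ${\rm MR}(J)$ is exactly a solution of \eqref{prob:quasilinear}.

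Next I would estimate $G$. Using ${\rm MR}(J)\hookrightarrow C(J;X_p)$, the local Lipschitz hypotheses on $F$ together with $F(\cdot,0)\in L^p(J_0;X_0)$, and the Lipschitz bound \eqref{eq:assAquasi} on $A$, one checks that whenever $\sup_{t\in J}\|v(t)-x_0\|_{X_p}\le\rho$ (so $\|v(t)\|_{X_p}\le R:=\|x_0\|_{X_p}+\rho$) one has
\[
\|G(v)\|_{L^p(J;X_0)}\le\delta_T(R)+C(R)\,\rho\,\|v\|_{L^p(J;X_1)},
\]
where $\delta_T(R):=\|F(\cdot,0)\|_{L^p(0,T;X_0)}+R\,\|\phi_R\|_{L^p(0,T)}\to0$ as $T\downarrow0$, and that for two such functions $v_1,v_2$,
\[
\|G(v_1)-G(v_2)\|_{L^p(J;X_0)}\le\kappa(R,T,\rho)\,\|v_1-v_2\|_{{\rm MR}(J)},
\]
where $\kappa(R,T,\rho)$ is bounded by a fixed multiple of $\|\phi_R\|_{L^p(0,T)}+C(R)\rho+C(R)\,\bigl(\|w^x\|_{L^p(0,T;X_1)}+r\bigr)$, the last summand coming from the factor $\|v_2\|_{L^p(J;X_1)}$ once $v_2$ ranges over the ball $\Sigma$ below. \textbf{This last term is the main obstacle:} because of the quasilinear coupling $A(t,v)\,v$, the contraction estimate inevitably carries a factor $C(R)\,\|v\|_{L^p(J;X_1)}$, and this does not become small just by shrinking $T$.

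The way around this is to run the fixed point in the closed ball $\Sigma:=\{v\in{\rm MR}(J):v(0)=x,\ \|v-w^x\|_{{\rm MR}(J)}\le r\}$ and to use two elementary facts about the reference linear solution, writing $w^x=w^{x_0}+w^{x-x_0}$ by linearity: first, $\|w^x\|_{L^p(0,T;X_1)}\le\|w^{x_0}\|_{L^p(0,T;X_1)}+M_1\|x-x_0\|_{X_p}$, with $\|w^{x_0}\|_{L^p(0,T;X_1)}\to0$ as $T\downarrow0$ since $w^{x_0}\in L^p(0,T_0;X_1)$; second, $\|w^x(t)-x\|_{X_p}\le\|w^{x_0}(t)-x_0\|_{X_p}+C_1\|x-x_0\|_{X_p}$, the first term tending to $0$ as $t\downarrow0$ and $C_1$ independent of $T$. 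Fixing, in this order, the radius $\varepsilon$ of admissible initial data, then $r$, then $T$, all small, one arranges that every $v\in\Sigma$ satisfies $\sup_{t\in J}\|v(t)-x_0\|_{X_p}\le\rho$ with $\rho$ (hence $R$ and $C(R)$) controlled, and that $\rho$, $\|v\|_{L^p(J;X_1)}$, $\delta_T(R)$ and $\kappa$ are as small as we wish. Since $\Lambda(v)-w^x=\mathcal S(0,G(v))$ and $\Lambda(v_1)-\Lambda(v_2)=\mathcal S(0,G(v_1)-G(v_2))$, the two displayed bounds combined with the a priori estimate for $\mathcal S$ yield $\|\Lambda(v)-w^x\|_{{\rm MR}(J)}\le r$ and $\|\Lambda(v_1)-\Lambda(v_2)\|_{{\rm MR}(J)}\le\tfrac12\|v_1-v_2\|_{{\rm MR}(J)}$, so $\Lambda$ maps $\Sigma$ into itself and is a contraction; the Banach fixed point theorem then produces the unique $u\in\Sigma$ with $\Lambda(u)=u$, the required solution.

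Finally, for the Lipschitz dependence one writes $u^x=\mathcal S(x,G(u^x))$, $u^y=\mathcal S(y,G(u^y))$, subtracts to get $u^x-u^y=\mathcal S(x-y,0)+\mathcal S(0,G(u^x)-G(u^y))$, and uses the contraction estimate to obtain $\|u^x-u^y\|_{{\rm MR}(J)}\le M_1\|x-y\|_{X_p}+\tfrac12\|u^x-u^y\|_{{\rm MR}(J)}$, so $C=2M_1$ works. Uniqueness in all of ${\rm MR}(J)$, rather than just in $\Sigma$, follows by continuation: any solution $\tilde u\in{\rm MR}(J)$ is continuous into $X_p$ with $\tilde u(0)=x$, so the a priori estimate for $\mathcal S$ forces $\tilde u$ to lie in the corresponding ball over a short initial interval and hence to coincide there with $u$; since the hypotheses on $A$ and $F$ are uniform over $J_0$, and since by Proposition \ref{prop:maxregsmallB} and \eqref{eq:assAquasi} the operator $A(\cdot,y_0)$ inherits maximal $L^p$-regularity from $A_{\#}$ whenever $\|y_0-x_0\|_{X_p}$ is small, the set where $\tilde u=u$ is relatively open as well as closed in $[0,T]$, provided $\varepsilon,r,T$ were fixed at the outset so that any solution staying in $\Sigma$ remains, throughout $[0,T]$, in a ball about $x_0$ small enough to re-run the local argument from any intermediate time. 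The delicate points are thus the mutually consistent choice of the smallness parameters $\varepsilon,r,T$ and the realization that centering $\Sigma$ at $w^x$, rather than at a constant, is precisely what makes $\|v\|_{L^p(J;X_1)}$, and with it the contraction constant, small.
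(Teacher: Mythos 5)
The proposal does not address the theorem it is meant to prove. What you have written is, in substance, a (reasonably complete) proof of the \emph{abstract} quasilinear well-posedness result, Theorem \ref{thm:quasilinear}: you freeze the operator at $x_0$, set up the solution map $\Lambda(v)=\mathcal S(x,G(v))$ with $G(v)=F(\cdot,v)+(A(\cdot,x_0)-A(\cdot,v))v$, center the fixed-point ball at the reference linear solution $w^x$, and tune $\varepsilon$, $r$, $T$ so that $\Lambda$ is a self-map and a contraction. That is the same strategy as the paper's proof of Theorem \ref{thm:quasilinear}, with the same key idea (centering at $w^x$ so that $\|v\|_{L^p(J;X_1)}$ is small on $\Sigma$). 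But Theorem \ref{thm:quasilinearappl} is the \emph{application} to the concrete second-order PDE \eqref{eqparprobhigherorderquasi}, and its proof in the paper consists entirely of verifying that the concrete $A(t,v)u=\sum_{|\alpha|\le 2}a_\alpha(t,\cdot,v,\nabla v)D^\alpha u$ and $F(t,u)=f(t,\cdot,u,\nabla u)$ satisfy the hypotheses of Theorem \ref{thm:quasilinear}. None of this verification appears in your argument.

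The missing pieces are precisely the ones that make the application nontrivial. (1) Your argument begins ``by hypothesis $A_{\#}=A(\cdot,x_0)$ has maximal $L^p$-regularity''; in the abstract theorem this is indeed a hypothesis, but here it must be \emph{proved}. One must show that the frozen operator $A(\cdot,g)$, whose coefficients $a_\alpha(t,x,g(x),\nabla g(x))$ are merely measurable in $t$, satisfies condition (C) and then invoke Theorem \ref{teomaxreghigherorder}; this uses assumptions (ii)--(iii) on $a_\alpha$ and the H\"older continuity of $g$ coming from the embedding $B^{2(1-1/p)}_{q,p}(\R^d)\hookrightarrow C^{1+\delta}(\R^d)$, which is where the exponent condition \eqref{eq:Sobolevexp} enters. (2) The local Lipschitz estimate \eqref{eq:assAquasi} for $v\mapsto A(t,v)$ is also not automatic; it requires assumption (iv) on $a_\alpha$ together with the pointwise bound $\|D^k v\|_\infty\le C\|v\|_{X_p}$ ($k=0,1$) from the same Sobolev embedding. (3) Similarly, the conditions on $F$ require $f(\cdot,\cdot,0,0)\in L^p(J_0;L^q)$ and the local Lipschitz bound, again via the embedding. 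Without these verifications one has not touched Theorem \ref{thm:quasilinearappl}; one has only (re-)proved the abstract statement it relies on. If you first record the proof you wrote as a proof of Theorem \ref{thm:quasilinear} and then add the embedding-plus-condition-(C) verification, the argument is complete and matches the paper.
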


\begin{remark}\
\begin{enumerate}
\item If $a_{\alpha}$ only depends on $u$ and not on its derivatives, then one can replace \eqref{eq:Sobolevexp} by the condition $2\Big(1-\frac{1}{p}\Big) - \frac{d}{q}>0$.
\item Theorem \ref{thm:quasilinearappl} can be extended to higher order equations. Then $a_{\alpha}$ is allowed to depend on the $(m-1)$-th derivatives of $u$. Moreover, by \cite{GV2} one can also consider higher order systems.
\end{enumerate}
\end{remark}

\subsection{Proof of Theorem \ref{thm:quasilinear}\label{subsec:proofthmquasi}}
From the trace estimate \eqref{eq:tracestalpha} and a simple reflection argument one sees see that there exists a constant $C$ independent of $T$ such that for all $u\in \text{MR}^p(J)$ with $u(0) = 0$ one has
\begin{equation}\label{eq:traceindepT}
\|u\|_{C([0,T];X_p)}\leq C_{\rm Tr} \|u\|_{\text{MR}^p(J)}.
\end{equation}
Without the assumption $u(0)=0$, one still has the above estimate but with a constant which blows up as $T\downarrow 0$ (see \eqref{eqaprioriA0genInitial} and use a translation argument).

For $u,v\in {\rm MR}(J)$ with $u(0) = v(0)\in X_p$, the following consequence of \eqref{eq:traceindepT} will be used frequently:
\begin{equation}\label{eq:conseqhandy}
\begin{aligned}
\|u\|_{C(J;X_p)}&\leq \|u-v\|_{C(J;X_p)} + \|v\|_{C(J;X_p)}
\\ & \leq C_{\rm Tr}\|u-v\|_{\text{MR}^p((0,T))} + \|v\|_{C(J;X_p)}.
\end{aligned}
\end{equation}

\begin{proof}[Proof of Theorem \ref{thm:quasilinear}]
We modify the presentation in \cite{KPW} to our setting. By the assumption and Proposition \ref{prop:initialvalue} we know that for each $x\in X_p$, there exists a unique solution $w^{x}\in {\rm MR}^p(J)$ of the problem
\[
\begin{cases}
w'(t)+A(t,x_0)w(t)=F(t,x_0), & t\in J_0\\
w(0)=x.
\end{cases}
\]
Moreover, by linearity
\[\|w^{x} - w^y\|_{{\rm MR}^p(J_0)}\leq C_0 \|x-y\|_{X_p}.\]
By \eqref{eqaprioriA0genInitial} and a translation argument we see that
\begin{equation}\label{eq:estwxwy}
\|w^x - w^y\|_{C(J_0;X_p)}\leq C_1\|w^{x} - w^y\|_{{\rm MR}^p(J_0)}\leq C_1 C_2\|x-y\|_{X_p}.
\end{equation}

{\em Step 1}. Let $C_A$ be the maximal $L^p$-regularity constant of $A(\cdot,u_0)$. We show that for a certain set of function $u\in {\rm MR}^p(J)$ maximal $L^p$-regularity holds with constant $2C_A$.
Fix $R>0$.
Since $w^{x_0}:[0,T]\to X_p$  is continuous we can find $T\in (0,T_0]$ such that
\begin{equation}\label{eq:wx0close}
\|w^{x_0}(t) - x_0\|_{X_p}\leq \frac{1}{4 C(R) C_A}, \ \ t\in [0,T].
\end{equation}
Let
\[r_0:=\frac{1}{4 C(R) C_A (C_{\rm Tr} +  C_{\rm Tr}C_2+  C_1 C_2)}\]
and write
\[\mathbb{B}_{r_0} = \{v\in {\rm MR}^p(J): \|v(0) - x_0\|_{X_p}\leq r_0 \ \text{and} \  \|v-w^{x_0}\|_{{\rm MR}^p(J)}\leq r_0\}.\]
From the assumptions we see that for all $v\in \mathbb{B}_{r_0}$ and $t\in [0,T]$, writing $x = v(0)$,
\begin{equation}\label{eq:vminx0}
\begin{aligned}
&\|v(t)-x_0\|_{X_p} \\ & \leq  \|v(t) - w^{x}(t)\|_{X_p} + \|w^{x}(t) - w^{x_0}(t)\|_{X_p} + \|w^{x_0}(t)-x_0\|_{X_p}
\\ & \leq C_{\rm Tr} \|v - w^x\|_{{\rm MR}^p(J)} + C_1 C_2 \|x-x_0\|_{X_p} + \frac{1}{4 C(R) C_A}
\\ & \leq C_{\rm Tr} r_0 +  C_{\rm Tr}\|w^{x_0}-w^{x}\|_{{\rm MR}^p(J)} +   C_1 C_2 \|x-x_0\|_{X_p} + \frac{1}{4 C(R) C_A}
\\ & \leq C_{\rm Tr} r_0 +   C_{\rm Tr} C_2 r_0   + C_1 C_2 r_0 + \frac{1}{4 C(R) C_A} \leq \frac{1}{2 C(R) C_A},
\end{aligned}
\end{equation}
where we used \eqref{eq:traceindepT} and \eqref{eq:estwxwy}.
Therefore by \eqref{eq:vminx0} and the assumption
\begin{align*}
\|&A(t,v(t)) - A(t,x_0)\|_{\calL(X_1, X_0)} \leq C(R) \|v(t)-x_0\|_{X_p} \leq \frac{1}{2 C_A}.
\end{align*}
Now Proposition \ref{prop:maxregsmallB} yields that $A(\cdot,v(\cdot))$ has maximal $L^p$-regularity with constant $2C_A$ for each $v\in \mathbb{B}_{r_0}$.

{\em Step 2}. Let $R = 1 + C_{\rm Tr} + C_{\rm Tr}C_2 +C_1 C_2 + \|w^{x_0}\|_{C(J_0;X_p)}$. Fix $0<r\leq \min\{1,r_0\}$ and $T$ as in Step $1$.
Note that by \eqref{eq:conseqhandy} and \eqref{eq:estwxwy} for $v\in \mathbb{B}_r$ and $x = v(0)$,
\begin{align*}
& \|v\|_{C(J;X_p)} \leq C_{\rm Tr}\|v - w^x\|_{{\rm MR}^p(J)} + \|w^x-w^{x_0}\|_{C(J;X_p)} + \|w^{x_0}\|_{C(J;X_p)}
\\ & \leq C_{\rm Tr}\|v - w^{x_0}\|_{{\rm MR}^p(J)} + C_{\rm Tr}\|w^{x_0} - w^x\|_{{\rm MR}^p(J)} + C_1 C_2 \|x-x_0\|_{X_p} + \|w^{x_0}\|_{C(J;X_p)}
\\ & \leq C_{\rm Tr} r + (C_{\rm Tr} C_2 + C_1 C_2) \|x-x_0\|_{X_p} + \|w^{x_0}\|_{C(J;X_p)}
\\ & \leq C_{\rm Tr} r + (C_{\rm Tr} C_2 + C_1 C_2) r + \|w^{x_0}\|_{C(J;X_p)}\leq R,
\end{align*}
where we used $r\leq 1$. Similarly, for $x\in B_r$, $\|x\|_{X_p}\leq r+\|x_0\|_{X_p}\leq R$.

For $x\in B_r$, let $\mathbb{B}_{r,x} \subseteq \mathbb{B}_r$ be defined by
\[\mathbb{B}_{r,x} = \{u\in {\rm MR}^p(J): u(0) = x \ \text{and} \  \|u-w^{x_0}\|_{{\rm MR}^p(J)}\leq r\}.\]
Before we introduce a fixed point operator argument on $\mathbb{B}_{r,x}$, let
\begin{align*}
f(v_1, v_2) &= F(t,v_1(t))-F(t,v_2(t)), \\
a(v_1, v_2, v_3)(t) &= (A(t,v_2(t)) - A(t,v_1(t))) v_3(t).
\end{align*}
for $v_{j} \in \mathbb{B}_{r,x_j}$ with $x_j\in B_r$ for $j\in \{1, 2\}$ and $v_3\in {\rm MR}^p(J)$.
Observe that by \eqref{eq:conseqhandy} and \eqref{eq:estwxwy}
\begin{align*}
\|v_{1} - v_{2}\|_{C(J;X_p)} & \leq C_{\rm Tr}\|v_{1} - v_{2} - (w^{x_1} - w^{x_2})\|_{{\rm MR}^p(J)} + \|w^{x_1} - w^{x_2}\|_{C(J;X_p)}
\\ & \leq C_{\rm Tr}\|v_{1} - v_{2}\|_{{\rm MR}^p(J)} +  (C_{\rm Tr} C_2 + C_1 C_2)\|x_1 - x_2\|_{X_p}.
\end{align*}
Let $C_J = \|\phi_R\|_{L^p(J)}$. For $f$ we find
\begin{align*}
\|f(v_1, v_2)\|_{L^p(J;X_0)} &\leq \|\phi_R (v_1-v_2)\|_{L^p(J;X_p)}
\leq C_J \|v_{1} - v_{2}\|_{C(J;X_p)}
\\ & \leq C_J C_{\rm Tr}\|v_{1} - v_{2}\|_{{\rm MR}^p(J)} +  C_J C_3\|x_1 - x_2\|_{X_p},
\end{align*}
where $C_3 = (C_{\rm Tr} C_2 + C_1 C_2)$. Similarly, applying the estimate for $v_1-v_2$ again,
\begin{align*}
\|a(v_1, v_2, v_3)\|_{L^p(J;X_0)} &\leq C(R) \big\| \|v_2- v_1\|_{X_p} \|v_3\|_{X_1} \|\big\|_{L^p(J;X_0)}
\\ & \leq C(R) \|v_{1} - v_{2}\|_{C(J;X_p)} \|v_3\|_{{\rm MR}^p(J)}
\\ & \leq C(R)\|v_3\|_{{\rm MR}^p(J)} \big[C_{\rm Tr}\|v_{1} - v_{2}\|_{{\rm MR}^p(J)} +  C_3\|x_1 - x_2\|_{X_p}\big].
\end{align*}

For $v\in \mathbb{B}_{r,x}$ and $x\in B_r$ let $L_x(v) = u\in {\rm MR}^p(J)$ denote the solution of
\[
\begin{cases}
u'(t)+A(t,v(t))u(t)=F(t,v(t)), & t\in J_0\\
u(0)=x.
\end{cases}
\]
For $v_{1}, v_{2}$ as before let $u_j:=L_{x_j}(v_j)$ for $j\in \{1, 2\}$. We find that $u := u_1 - u_2$ in ${\rm MR}^p(J)$ satisfies $u(0) = x_1-x_2$ and
\[u'(t) + A(t,v_1(t))u(t)= f(v_1, v_2)(t) + a(v_1, v_2, u_2)(t), \ \ t\in J.\]
Therefore, by Step 1, Proposition \ref{prop:initialvalue} and the previous estimates, we find
\begin{equation}\label{eq:LLipx12}
\begin{aligned}
\|&L_{x_1}(v_1) -L_{x_2} (v_2)\|_{{\rm MR}^p(J)} \\ & \leq 2C_A\Big( \|x_1-x_2\|_{X_p} + \|f(v_1, v_2)\|_{L^p(J;X_0)} + \|a(v_1, v_2, u_2)\|_{L^p(J;X_0)}\Big)
\\ & \leq K_1(\|u_2\|_{{\rm MR}^p(J)}) \|x_1-x_2\|_{X_p} + K_2(\|u_2\|_{{\rm MR}^p(J)})\|v_{1} - v_{2}\|_{{\rm MR}^p(J)},
\end{aligned}
\end{equation}
where for $s\geq 0$,
\begin{align*}
K_1(s) &= 2C_A (1+C_J C_3 +C(R) C_3 s),
\\ K_2(s) & = 2C_A\big[C_J C_{\rm Tr} +  C(R) C_{\rm Tr} s \big].
\end{align*}

Extending the definitions of $L$, $f$ and $a$ in the obvious way we can write $w^{x_0} = L_{x_0}(x_0)$. Estimating as before, one sees that for $x\in B_r$ and $v\in \mathbb{B}_{r,x}$,
\begin{equation}\label{eq:Lxvw}
\begin{aligned}
&\|L_{x}(v) -L_{x_0}(x_0)\|_{{\rm MR}^p(J)} \\ & \leq 2C_A\Big( \|x-x_0\|_{X_p} + \|f(v,x_0\|_{L^p(J;X_0)} + \|a(v,x_0,w^{x_0})\|_{L^p(J;X_0)}\Big)
\\ & \leq 2C_A\Big( \|x-x_0\|_{X_p} + \big[C_J + C(R) \|w^{x_0}\|_{{\rm MR}^p(J)}\big] \|v - x_0\|_{C(J;X_p)}\Big),
\\ & \leq 2C_A\Big( \|x-x_0\|_{X_p} + \big[C_J + C(R) \|w^{x_0}\|_{{\rm MR}^p(J)}\big]\frac{1}{2C(R) C_A}\Big),
\end{aligned}
\end{equation}
where in the last step we used \eqref{eq:vminx0}.

Choose $0<r\leq \min\{1,r_0\}$ such that
\[4C_A r C(R) C_{\rm Tr}\leq \frac14\]
Choose $T$ such that \eqref{eq:wx0close} holds,
\[2 C_A C_J  C_{\rm Tr}\leq \frac{1}{4}, \ \ \frac{C_J}{C(R)} \leq \frac{r}{4}, \ \ \text{and} \ \ \|w^{x_0}\|_{{\rm MR}^p(J)}\leq \frac{r}{4}.\]
Let $\varepsilon = \min\{\frac{r}{4C_A}, r\}$.
Then from \eqref{eq:Lxvw} we obtain that for all $x\in B_{\varepsilon}$, $L_{x}$ maps $\mathbb{B}_{r,x}$ into itself. In particular, for all $x\in B_{\varepsilon}$ and $v\in \mathbb{B}_{r,x}$,
\begin{equation}\label{eq:estLxv}
\|L_x(v)\|_{{\rm MR}^p(J)} \leq \|L_x(v) - w^{x_0}\|_{{\rm MR}^p(J)} + \|w^{x_0}\|_{{\rm MR}^p(J)} \leq  r + \frac{r}{4} \leq 2r.
\end{equation}
Moreover, for all $x_j\in B_{\varepsilon}$ and $v_j\in \mathbb{B}_{r,x_j}$ for $j\in \{1,2\}$,
\begin{equation}\label{eq:contractionL}
\|L_{x_1}(v_1) -L_{x_2}(v_2)\|_{{\rm MR}^p(J)} \leq K_1(2) \|x_1-x_2\|_{X_p} + \frac12 \|v_{1} - v_{2}\|_{{\rm MR}^p(J)},
\end{equation}
where we used \eqref{eq:LLipx12} and \eqref{eq:estLxv}.
In particular, $L_{x}$ defines a contraction on $\mathbb{B}_{r,x}$ and by the Banach contraction principle we find that there exists a unique $u\in \mathbb{B}_{r,x}$ such that $L_{x}(u) = u$. This yields the required result.

The final estimate of the theorem follows from \eqref{eq:contractionL}.
\end{proof}

\def\cprime{$'$}

\end{document}